\newcommand{\fin}{\hfill$\square$}
\newcommand{\R}{\bm{R}}
\newcommand{\N}{\bm{N}}
\newcommand{\B}{\bm{B}}
\renewcommand{\phi}{\varphi}
\newcommand{\ep}{\varepsilon}
\renewcommand{\geq}{\geqslant}
\renewcommand{\leq}{\leqslant}
\newcommand{\norm}[1]{\lVert#1\rVert}
\newcommand{\flap}{(-\Delta)^\alpha}
\newcommand{\vara}{\ep^{\frac{2}{3}(1-3\alpha)}}
\newcommand{\varb}{\ep^{\frac{3}{4}\ell}}
\newcommand{\varc}{2}
\newcommand{\vard}{}
\newcommand{\vare}{m}
\newcommand{\varf}{\ep^{\frac{3}{4}\ell}}
\newcommand{\jb}[1]{\langle #1 \rangle}
\newcommand{\COL}[2]{\color{black} #2 \color{black}}
\newcommand{\circI}{\large I}
\newcommand{\circII}{\large II}
\newcommand{\circIII}{\large III}
\newcommand{\circIV}{\large IV}
\newcommand{\circV}{\large V}
\numberwithin{equation}{section}
\newcounter{theorems}[section]
\newtheorem{theorem}{Theorem}[section]
\newtheorem{corollary}{Corollary}[theorem]
\newtheorem{lemma}[theorem]{Lemma}
\renewenvironment{proof}
{
  \vspace{1.5ex}
  \noindent\textbf{Proof.}
}
{
  \fin
  \vspace{1.5ex}
}
\newcommand{\Addresses}{{
    \bigskip

  Kyle R. Chickering (Corresponding Author), \textsc{UC Davis
      Department of Mathematics}
  \par\nopagebreak
  \textit{E-mail address:}
  \href{mailto:krc@math.ucdavis.edu}{\texttt{krc@math.ucdavis.edu}}

  \medskip

  Ryan Chris Moreno-Vasquez, \textsc{UC Davis Department of
    Mathematics}
  \par\nopagebreak
  \textit{E-mail address:}
  \href{mailto:rcmorenovasquez@ucdavis.edu}
      {\texttt{rcmorenovasquez@ucdavis.edu}}

  \medskip

  Gavin Pandya, \textsc{UC Davis Department of Mathematics}
  \par\nopagebreak
  \textit{E-mail address:}
  \href{mailto:gpandya@ucdavis.edu}
      {\texttt{gpandya@ucdavis.edu}}
}}
\title{
  \textbf{
    Asymptotically self-similar shock formation for
    1d fractal Burgers equation
  }
}
\author{
  Kyle R. Chickering
  \and
  Ryan C. Moreno-Vasquez
  \and
  Gavin Pandya
}
\date{}
\begin{document}

\maketitle

\begin{abstract}
  For $0<\alpha<\sfrac{1}{3}$ we construct unique solutions to the
  fractal Burgers equation
  \begin{align*}
    \partial_t u + u\partial_xu + \flap u = 0
  \end{align*}
  which develop a first shock in finite time, starting from smooth
  generic initial data. This first singularity is an asymptotically
  self-similar, stable $H^6$ perturbation of a stable, self-similar
  Burgers shock profile. Furthermore, we are able to compute the
  spatio-temporal location and H\"older regularity for the first
  singularity. There are many results showing that gradient blowup
  occurs in finite time for the supercritical range, but the present
  result is the first example where singular solutions have been
  explicitly constructed and so precisely characterized.
\end{abstract}

\tableofcontents

\section{Introduction}
In this paper we study \textit{fractal Burgers equation}
\begin{align}\label{eq:fvbeq}
  \begin{aligned}
  \partial_t u + u\partial_x u + (-\Delta)^{\alpha}u &= 0, \qquad
  \text{on } \R\times \R \\
  u(x,0)&=u_0(x), \qquad \text{on } \R
  \end{aligned}
\end{align}
where the fractional Laplacian $\flap$ is defined as the Fourier multiplier
$|\xi|^{2\alpha}$, and can be represented by the Calderon-Zygmund
singular integral
\begin{align}\label{eq:singular_int}
  ((-\Delta)^\alpha u)(x) = C_\alpha
  \int_{\R}\,\frac{u(x)-u(\eta)}{|x-\eta|^{2\alpha + 1}}\,d\eta
  ,\qquad
  C_\alpha
  = \frac{4^\alpha\Gamma(\alpha + 1/2)}{\pi^{1/2}|\Gamma(-\alpha)|},
\end{align}
taken in the principle value sense. For the remainder of this paper we
will assume all singular integrals are taken in a principle value
sense. The formula \eqref{eq:singular_int} is well known and a
derivation for any dimension is given in \cite{cordoba_2004}.

Equation \eqref{eq:fvbeq} models the competing effects of viscous
regularization and a Burgers-type nonlinearity
\cite{alibaud_2007,kiselev_2008}. The case $\alpha=0$ corresponds to
Burger's equation, which forms shocks in finite time from smooth
initial data, while $\alpha=1$ corresponds to viscous Burger's
equation, whose solutions immediately regularize to $C^\infty$
\cite{whitham_2011}. In particular, \eqref{eq:fvbeq} is a dramatically
simplified model of the interaction between viscous regularization and
nonlinearity in the Navier-Stokes equations.

The equation \eqref{eq:fvbeq} has critical exponent
$\alpha = \sfrac{1}{2}$, at which the orders of the regularization and
the nonlinearity match. It is known that equation \eqref{eq:fvbeq}
forms gradient discontinuities in the supercritical case
$0<\alpha< \sfrac{1}{2}$, and remains globally smooth from smooth data
in both the critical case $\alpha=\sfrac{1}{2}$ and the subcritical
case $\sfrac{1}{2}<\alpha<1$ \cite{kiselev_2008,alibaud_2007}.

\subsection{Historical Study of Fractal Burgers Equation}

Equation \eqref{eq:fvbeq} has been well-studied in the last two
decades. Biler, Funaki, and Woyczynski \cite{biler_1998} established
the existence of self-similar solutions, $L^2$ energy estimates, and
global well-posedness in $H^1$ for the range
$\sfrac{3}{4}<\alpha\leqslant 1$, among other things. More complete
studies of the existence and blowup of solutions to \eqref{eq:fvbeq}
were carried out independently by Alibaud, Droniou, and Vovelle
\cite{alibaud_2007}, and Kiselev, Nazarov, and Shterenberg
\cite{kiselev_2008}. Although they used very different methods, both
proved finite-time blowup in the supercritical case, and global
existence in the critical and subcritical cases. Dong, Du, and Li
\cite{dong_2009} produced an explicit open set of initial data which
form shocks in finite time.  Albritton and Beekie have examined the
long term asymptotic behavior of solutions to \eqref{eq:fvbeq} in the
critical case $\alpha=\sfrac{1}{2}$ and showed that solutions
asymptotically approach a self-similar solution
\cite{albritton_2020}. In this paper, we produce an open set of
initial data which asymptotically approach a self-similar solution to
Burger's equation, and characterize the H\"older regularity of the
shock profile.

As this paper was being submitted to the arXiv
we were made aware of an independent work by Oh and Pasqualotto which
proves the same result using different techniques \cite{oh_2021}.

\subsection{Similarity Variables, Modulation, and Singularity Formation}
The utility of similarity variables has been known for decades, and
the comprehensive study of such methods was started by Barenblatt and
Zel'Dovich \cite{barenblatt_1972,barenblatt_1996} in the 20th
centry. Giga and Kohn \cite{giga_1985,giga_1987} adapted these methods
for studying singularity formation in the nonlinear heat equation, and
made significant contributions to our understanding of the role of
self-similarity in singular behavior. Eggers and Fontelos
\cite{eggers_2008,eggers_2015} have distilled the work of Barenblatt,
Zel'Dovich, Giga, Kohn, and others into a digestible framework
suitable for applications in many physical and analytic contexts. Most
recently, Collot, Ghoul, and Masmoudi \cite{collot_2018} proved that
shocks for Burgers equation are self-similar to leading order.

The present result is inspired by the results obtained by Buckmaster,
Shkoller, and Vicol in their series of papers \cite{buckmaster_2019_1,
  buckmaster_2019_2, buckmaster_2019_3} which prove shock formation
for the 2D isentropic compressible Euler, 3D isentropic compressible
Euler, and 3D non-isentropic compressible Euler equations
respectively. The fundamental idea underlying the method of Buckmaster, Shkoller,
and Vicol is that instabilities in the evolution correspond to
symmetries of the equation under study. Informally speaking, we
introduce ``modulation functions'' which keep track of the equivalence class of the solution as it evolves. 

\subsection{Summary of Present Result}
The present result shows that for a certain range of $\alpha$, we can
view the evolution \eqref{eq:fvbeq} as a perturbation of
\textit{stable} Burgers shock dynamics:

\begin{theorem}
  \textbf{(Imprecise Statement of Result)} Starting from
  non-degenerate initial data, the solution of \eqref{eq:fvbeq} forms a
  generic point shock. Furthermore, we can compute the time, location,
  and regularity of this shock. Furthermore, the shock is an
  asymptotically self-similar Burger's shock.
\end{theorem}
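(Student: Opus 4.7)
The plan is to adapt the modulation-theoretic self-similar framework of Buckmaster, Shkoller, and Vicol to the present nonlocal setting, treating the fractional dissipation as a perturbation of the pure Burgers blowup. I would begin by introducing similarity variables $y = (x - \xi(t))/(T-t)^{3/2}$ and $s = -\log(T-t)$, together with three modulation functions: the blowup time $T$, the spatial location $\xi(t)$, and an amplitude $\kappa(t)$. Writing $u(x,t) = \kappa(t) + (T-t)^{1/2}\bigl(\U(y) + W(y,s)\bigr)$, where $\U$ denotes the stable self-similar Burgers profile (the odd root of $-\tfrac{1}{2}\U + \bigl(\tfrac{3}{2}y + \U\bigr)\U' = 0$ with $\U \sim y^{1/3}$ at infinity), a direct computation shows that the perturbation $W$ satisfies
\begin{equation*}
  \partial_s W + \Bigl(\tfrac{3}{2}y + \U + W\Bigr)\partial_y W - \tfrac{1}{2}W + W\,\U' = -\mathcal{F}(\dot{\kappa},\dot{\xi},\dot{T}) - e^{-(1-3\alpha)s}\flap (\U + W),
\end{equation*}
where $\mathcal{F}$ is a linear forcing collecting the modulation derivatives. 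Crucially, the prefactor $e^{-(1-3\alpha)s}$ decays as $s \to \infty$ precisely when $\alpha < \tfrac{1}{3}$, which heuristically explains why this is the natural range.

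Next I would impose the three orthogonality conditions $W(0,s) = \partial_y W(0,s) = \partial_y^2 W(0,s) = 0$, which convert the free parameters $(T,\xi,\kappa)$ into ODEs obtained by evaluating the equation for $W$ at $y=0$ and absorbing the three unstable directions (time translation, space translation, Galilean boost) of the linearized Burgers operator around $\U$. The main argument would then be a bootstrap in a weighted $H^6$-topology combining (i) pointwise control of $W$ by tracing characteristics of the transport field $\tfrac{3}{2}y + \U + W$, which damps toward the origin thanks to the sign condition $\tfrac{3}{2} + \U'(y) > 0$; (ii) high-order weighted $L^2$ estimates obtained by differentiating $k$ times for $k\leq 6$ and exploiting the positivity $\int (\flap \partial_y^k W)\,\partial_y^k W \geq 0$; and (iii) smallness of the modulation ODEs for $\dot{T},\dot{\xi},\dot{\kappa}$ inferred from the orthogonality conditions. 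Once $W$ is shown to remain small in this norm for all $s$, the modulation functions converge to definite limits $T^*, x^*, \kappa^*$, and the first singularity of $u$ occurs at $(x^*,T^*)$ with H\"older regularity $1/3$ inherited from $\U$.

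The main obstacle will be absorbing the nonlocal term $\flap W$ in the high-order energy and pointwise estimates. Unlike a local viscosity, $\flap$ does not commute cleanly with the anisotropic self-similar rescaling, nor does it respect the polynomial weights adapted to the algebraic $|y|^{1/3}$ growth of the perturbation at infinity; in particular, the singular integral representation \eqref{eq:singular_int} mixes information from all scales, so one is forced to split $\flap W$ into a local piece (handled by interpolation and a commutator estimate against the weight) and a far-field piece (dominated by the weighted $L^\infty$ bound on $W$ together with the H\"older tails of $\U$). A second, subtler point is that the dissipation does not supply new regularity commensurate with the six derivatives being propagated, so the smallness of the prefactor $e^{-(1-3\alpha)s}$ must do all the work of compensating the loss of derivatives. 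Verifying that this absorption closes requires a precise accounting of how each $y$-derivative worsens the effective decay rate, and it is this bookkeeping that ultimately forces the strict restriction to the range $0 < \alpha < \tfrac{1}{3}$ rather than the full supercritical range $\alpha < \tfrac{1}{2}$.
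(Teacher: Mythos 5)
Your proposal follows essentially the same route as the paper: modulated self-similar variables with three modulation functions fixed by the constraints at $y=0$, a bootstrap combining weighted $L^\infty$ estimates along Lagrangian trajectories with a top-order ($H^6$) energy estimate in which the fractional term is dropped by positivity and the loss of derivatives is recovered by interpolation, the splitting of $(-\Delta)^\alpha$ into near/far pieces, and the decay of the prefactor $e^{(3\alpha-1)s}$ as the reason for the restriction $0<\alpha<\sfrac{1}{3}$. One small correction of phrasing rather than substance: the characteristics of $\tfrac{3}{2}y+\Psi+\cdots$ are repelled from the origin and grow like $e^{\frac{3}{2}s}$ (this outward escape is precisely what converts spatial decay of the forcing into temporal decay), so the sign condition provides damping in the equation for the perturbation, not trajectories that ``damp toward the origin.''
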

The formal statement of this result is contained in Theorem
\ref{thm:stable_main_thm} and Corollary \ref{cor:open_set}. Our proof
closely follows the strategy laid out in the recent works
\cite{buckmaster_2019_1,buckmaster_2019_2,buckmaster_2019_3,buckmaster_2020,yang_2020},
namely using modulated self-similar variables, careful bounds on
Lagrangian trajectories, and a bootstrapping argument to prove the
result. 

\paragraph*{Symmetry and Modulation.}
To illustrate the need for modulation, suppose we have an exact
initial data $u(\cdot,0)$ which we know leads to a shock, and in the
course of our proof we use the fact that $u(0,0)=0$. We want to show
that in some open neighborhood of $u(\cdot,0)$ a shock will still
form. In our case, we work with the $H^6$ topology, in which a generic
perturbation of $u(\cdot,0)$ no longer fixes the origin.

The reader can check that Fractal Burgers equation \eqref{eq:fvbeq}
is invariant under the following four parameter symmetry group
\begin{align}\label{eq:symms}
  u_*(x,t) = \frac{1}{\lambda}u\left(\frac{x-x_0}{\lambda},
  \frac{t-t_0}{\lambda}\right) + u_0,
\end{align}
where $u$ solves \eqref{eq:fvbeq}. 

Our solution to this problem is to track certain invariants which fix
the equivalence class under the relevant symmetries. For instance,
when we perturb $u(\cdot,0)$ by a sufficiently small amount to
$v(\cdot,0)$, there is a small function $\xi(t)$ such that
$v(\xi(t),t)=0$ for small $t$. By keeping track of $\xi$, we fix the
equivalence class of $v(\cdot,0)$ under spatial translation. The
modulation functions fix the equivalence class of the solution and
allow us to drop the odd symmetry assumptions present in
\cite{alibaud_2007} and prove shock formation on an open set in a
strong topology.

\section{Self-Similar Transformation,
  Initial Data, and Statement of Theorem}
\label{sec:trans_dat_thm}
Our solutions will develop a gradient discontinuity at an
\textit{a priori} unknown time $T_*$ and location $x_*$. Throughout
$\ep$ denotes a small parameter which will be fixed during our proof.

\subsection{The Self-Similar Transformation}\label{sec:transform}
We define the modulation variables
$\tau, \xi, \kappa:[-\ep, T_*]\rightarrow \R$, which respectively
control the temporal location, spatial location, and density of the
developing shock. We fix their initial values at time $t=-\ep$ to be
\begin{align}\label{eq:mod_var_constraints}
  \tau(-\ep)=0, \quad \xi(-\ep)=0, \quad \kappa(-\ep)=\kappa_0.
\end{align}
By their definition, the modulation variables will satisfy
$\tau(T_*)=T_*$ and $\xi(T_*)=x_*$, meaning that the gradient blowup
occurs at the unique fixed point of $\tau$. 

We make the change to \textit{modulated self-similar} variables
described by
\begin{align}\label{eq:self_similar_variables}
  y(x,t) =
  \frac{x-\xi(t)}{(\tau(t)-t)^{3/2}},\qquad s(t) =
  -\log(\tau(t)-t)
\end{align}
and the modulated self-similar variables ansatz 
\begin{align}\label{eq:self_similar_w_ansatz}
  u(x,t)=e^{-\frac{s}{2}}W(y,s) + \kappa(t).
\end{align}
This ansatz corresponds to the stable self-similar scaling for the
stable Burgers profile $\Psi$ (c.f. Appendix \ref{app:burgers}). This
scaling is a natural choice since we are considering the dynamics of
\eqref{eq:fvbeq} as a perturbation from the stable Burgers dynamics.

Plugging the ansatz \eqref{eq:self_similar_w_ansatz} into
\eqref{eq:fvbeq} we obtain the PDE
\begin{align}\label{eq:modulated_pde}
  \left(\partial_s - \frac{1}{2}\right)W + \left(\beta_\tau(W +
  e^{\frac{s}{2}}(\kappa-\dot{\xi})) +
  \frac{3}{2}y\right)\partial_x W
  = -\beta_\tau e^{-\frac{s}{2}}\dot{\kappa} -
  \beta_\tau e^{(3\alpha-1)s} \flap W
\end{align}
for $W$ in $y$ and $s$ and where
\begin{align*}
  \beta_\tau := \frac{1}{1-\dot{\tau}}.
\end{align*}

For convenience we define the transport speed
\begin{align}\label{eq:transport_speed}
  g_W := \beta_\tau\left(W +
  e^{\frac{s}{2}}(\kappa-\dot{\xi})\right) +
  \frac{3}{2}y.
\end{align}
The equations governing the evolution of the derivatives of $W$ are
recorded in Appendix \ref{app:evolution_derivs}.

\subsubsection{Modulation Functions and Constraints on the Evolution}
Imposing the following constraints at $y=0$ fully characterizes the
developing shock, as well as fixing our choice of $\tau$, $\xi$
and $\kappa$:
\begin{align}\label{eq:w_constraints}
  W(0,s)=0, \quad \partial_y W(0,s)=-1, \quad \partial_y^2W(0,s)=0.
\end{align}

For any function $\phi$ we denote $\phi(0,s)=:\phi^0(s)=\phi^0$ out of
convenience. We record the following identities for $\tau$ and $\xi$
\begin{subequations}
  \begin{align}
    \dot{\tau}
    &= -e^{(3\alpha-1)s}\left(\flap \partial_y W\right)^0(s),
      \label{eq:tau_dot_closed}\\
    \kappa - \dot{\xi}
    &=-e^{-s}\dot{\kappa} - e^{(3\alpha-\frac{3}{2})s}\left(
      \flap W\right)^0(s) =
      -\frac{e^{(3\alpha-\frac{3}{2})s}}{\partial_y^3W^0(s)}
      \left(\flap \partial_y^2 W\right)^0(s).
      \label{eq:xi_dot_closed}
  \end{align}
\end{subequations}
We compute \eqref{eq:tau_dot_closed} by evaluating
\eqref{eq:stable_1st} at $y=0$. Equation \eqref{eq:xi_dot_closed} follows from
evaluating \eqref{eq:stable_2nd} and \eqref{eq:modulated_pde} at
$y=0$.

\subsection{Assumptions on the Initial Data}
Let $M>1$ be large, $\ep<1$ small. These constants will be fixed in
the course of the proof and are independent of one another. This
ensures that that we can make quantities of the form $M^p\ep$, $p>0$,
arbitrarily small by taking $\ep$ sufficiently small. We use this
frequently in what follows. We define the constants:
\begin{align}\label{eq:bootstrap_constants}
  m = \frac{3}{16}(1-3\alpha),
  \qquad \ell = \frac{7}{8}(1-3\alpha)
  \qquad q = \min\{\frac{2}{3}(1-3\alpha),
  \frac{1}{6}\},
\end{align}
which are chosen to satisfy certain constraints arising during the
course of the bootstrap argument. We use the notation
$\jb{x}:=(1+x^2)^{1/2}$ throughout and set
\begin{align}\label{eq:s0_h}
  s_0=-\log(\ep) \qquad \text{and} \qquad h=(\log M)^{-2}.
\end{align}
Note that $h$ depends on $M$ but not on $\ep$.

We break $\R$ into three regions. We call $\B_{h}(0)$ the
\textit{Taylor region} since we will rely on Taylor expanding to close
our bootstraps here. The next region is the time dependent annulus
$\B_{e^{ms}}(0) \setminus \B_{h}(0)$ with $m$ defined in
\eqref{eq:bootstrap_constants}. Finally we have the \textit{far field}
which is simply $\R\setminus \B_{e^{ms}}(0)$. Since the fractal
Burgers equation does not propagate compact support, we must maintain
quantitative control on the solution in this region.

\subsubsection{Initial Data in Self-Similar Variables}
We choose initial data $W(0,s)$ in self-similar variables satisfying
the following pointwise equality at the origin:
\begin{align}\label{eq:w_init_origin}
  W^0(s_0)=0, \qquad
  \partial_yW^0(s_0)=-\norm{\partial_yW(\cdot,s_0)}_{L^\infty} = -1,
  \qquad \partial_yW^0(s_0)=0.
\end{align}

We define $\widetilde{W}:=W-\Psi$, where $\Psi$ is the exact,
self-similar Burgers profile \eqref{eq:stable_burg_explicit}. In this
notation we assume
\begin{align}\label{eq:third_x_0}
	|\partial_y^3\widetilde{W}^0(s_0)| \leqslant
	6 \ep^{\frac{16}{9}(1-3\alpha)},
\end{align}

\begin{subequations}\label{eq:w0_init_data}
  \begin{align}[left ={|\widetilde{W}(y,s_0)|\leqslant\empheqlbrace}]
    \ep^{1-3\alpha},
    &\qquad 0 \leqslant |y| \leqslant h \label{eq:w0_init_near} \\
    \frac{1}{2}\ep^q\jb{y}^{\sfrac{1}{3}},
    &\qquad h \leqslant |y| < \infty \label{eq:w0_init_far},
  \end{align}
\end{subequations}
and
\begin{subequations}\label{eq:w1_init_data}
  \begin{align}[left ={|\partial_y\widetilde{W}(y,s_0)|\leqslant\empheqlbrace}]
    \ep^{1-3\alpha},
    &\qquad 0
      \leqslant |y| \leqslant h \label{eq:w1_init_near} \\
    \frac{3}{4}\varf \jb{y}^{-\sfrac{2}{3}},
    &\qquad h
      \leqslant |y| \leqslant \ep^{-m} \label{eq:w1_init_middle} \\
    |\partial_y\widetilde{W}(y,s_0)| \leqslant \varc \ep^2,
    &\qquad \ep^{-m} \leqslant |y| \label{eq:w1_init_far}.
  \end{align}
\end{subequations}
as well as the following bounds on the higher derivatives in the
Taylor region $0 \leqslant |y| \leqslant h$
\begin{align}\label{eq:w234_init_data}
  |\partial_y^2 \widetilde{W}|(y,s_0) \leqslant \ep^{1-3\alpha},
  \qquad
  |\partial_y^3 \widetilde{W}|(y,s_0) \leqslant \ep^{1-3\alpha},
  \qquad
  |\partial_y^4 \widetilde{W}|(y,s_0) \leqslant \ep^{1-3\alpha}.
\end{align}
We will assume the norm bounds
\begin{align}
  \norm{\partial_yW(\cdot,s_0)}_{L^2}\leqslant 200,
  \qquad
  \norm{\partial_y^3W(\cdot,s_0)}_{L^\infty} \leqslant
  \frac{M^{\frac{4}{7}}}{2}, \qquad
  \norm{\partial_y^6W(\cdot,s_0)}_{L^2} \leqslant
  \frac{M^{ \sfrac{1}{2}} }{2}. \label{eq:init_norm_bounds}
\end{align}

\subsubsection{Initial Data in Physical Variables}
We translate \eqref{eq:w_init_origin}-\eqref{eq:init_norm_bounds} into
the physical variable. Our pointwise equalities at the origin become
\begin{align}
  u_0(0)=\kappa_0, \qquad u_0'(0)=-\ep^{-1}=-\norm{u_0'}_{L^\infty},
  \qquad u_0''(0)=0.
\end{align}
Inequality \eqref{eq:third_x_0} becomes
\begin{align}\label{eq:u0_d3_at_0}
  |\partial_\theta^3 u_0(0)-6\ep^{-4}|\leqslant
  6 \ep^{-\sfrac{4}{9}(5+12\alpha)}.
\end{align}
The initial bounds
\eqref{eq:w0_init_data}-\eqref{eq:w1_init_data} in the self-similar
variables become
\begin{subequations}\label{eq:u0_init_data}
  \begin{align}[left ={|u(x,-\ep)|\leqslant\empheqlbrace}]
    \ep^{\frac{1}{2}}\left(\Psi(x\ep^{-\frac{3}{2}}) +
    \ep^{1+3\alpha}\right) + \kappa_0,
    &\qquad 0 \leqslant |x| \leqslant h\ep^{\frac{3}{2}}
      \label{eq:u0_init_near} \\
    \ep^{\frac{1}{2}}\left(\Psi(x\ep^{-\frac{3}{2}}) +
    \frac{1}{2}\ep^q\jb{x\ep^{-\frac{3}{2}}}^{\sfrac{1}{3}}\right)
    +\kappa_0,
    &\qquad h\ep^{\frac{3}{2}} \leqslant |x| < \infty
      \label{eq:u0_init_far}
  \end{align}
\end{subequations}

\begin{subequations}\label{eq:u1_init_data}
  \begin{align}[left ={|\partial_x u(x,-\ep)|
    \leqslant\empheqlbrace}]
    \ep^{-1}\left(\Psi'(x\ep^{-\frac{3}{2}}) + \ep^{1-3\alpha}\right),
    &\qquad 0 \leqslant |x| \leqslant h\ep^{\frac{3}{2}}
    \label{eq:u1_init_near}\\
    \ep^{-1}\left(\Psi'(x\ep^{-\frac{3}{2}}) + \frac{3}{4}\varf
    \jb{x\ep^{-\frac{3}{2}}}^{-\sfrac{2}{3}}\right),
    &\qquad h\ep^{\frac{3}{2}} \leqslant |x|
                                    \leqslant \ep^{-m+\frac{3}{2}}
    \label{eq:u1_init_middle} \\
    \ep^{-1}\left(\Psi'(x\ep^{-\frac{3}{2}}) +
    \frac{1}{2}\ep^{1/2}\varc \ep\right),
    &\qquad \ep^{-m+\frac{3}{2}} \leqslant |x| < \infty
    \label{eq:u1_init_far}
  \end{align}
\end{subequations}
The higher derivative bounds \eqref{eq:w234_init_data} translate to
\begin{align}\label{eq:u234_init_data}
  \begin{split}
    \partial_x^2 u (x,-\ep) \leqslant \ep^{-\sfrac{5}{2}}
    \left(\ep^{1-3\alpha} + \Psi^{(2)}(x \ep^{-\frac{3}{2}})\right), \\
    \partial_x^3 u (x,-\ep) \leqslant \ep^{-4}
    \left(\ep^{1-3\alpha} + \Psi^{(3)}(x \ep^{-\frac{3}{2}})\right), \\
    \partial_x^4 u (x,-\ep) \leqslant \ep^{-\sfrac{11}{2}}
    \left(\ep^{1-3\alpha} + \Psi^{(4)}(x \ep^{-\frac{3}{2}})\right),
  \end{split}
\end{align}
which are valid in the region
$0 \leqslant |x| x\leqslant h\ep^{\frac{3}{2}}$. The norm bounds
\eqref{eq:init_norm_bounds} become
\begin{subequations}
  \begin{align}\label{eq:init_norm_bounds_phys}
    \norm{\partial_x u(\cdot, -\ep)}_{L^2}^2 =
    \ep^{-\frac{1}{2}}\norm{\partial_xW(\cdot,s_0)}_{L^2}^2 \leqslant
    40000\,\ep^{-\frac{1}{2}}, \\
    \norm{\partial_x^3 u(\cdot, -\ep)}_{L^\infty} \leqslant
    \ep^{-4}\norm{\partial_x^3 W_0}_{L^\infty} = \frac{\ep^{-4}}{2}
    M^{\frac{4}{7}}, \label{eq:linf_third_physical} \\
    \norm{\partial_x^6 u(\cdot, -\ep)}_{L^2} =
    \ep^{-4} \norm{\partial_x^6 W_0}_{L^2} \leqslant
    \frac{\ep^{-\frac{31}{4}}}{2}M^{\sfrac{1}{2}}.
    \label{eq:l2_fifth_physical}
  \end{align}
\end{subequations}

\subsection{Main Theorem}
We are now equipped to state the main result of this work

\begin{theorem}\label{thm:stable_main_thm}
  There exist constants $M, \ep > 0$ sufficiently large and small
  respectively such that for some $u_0\in H^6(\R)$, with
  compact support, satisfying
  \eqref{eq:u0_init_data}-\eqref{eq:init_norm_bounds_phys} at
  $t=-\ep$, the unique solution to \eqref{eq:fvbeq} forms a shock in
  finite time. Furthermore
  \begin{enumerate}[label=(\roman*)]
  \item For any $\overline{T}<T_*$,
    $u\in C([-\ep,\overline{T}];C^5(\R))$.
  \item The blowup location $\xi(T_*)=x_*$ is unique.
  \item The blowup time $T_*$ occurs prior to
    $t=\frac{3}{4}\ep^{\frac{8}{9}(1-3\alpha)}$ and the blowup
    location $x_*$ satisfies $|x_*|\leqslant 4M\ep$. The
    blowup time and location are explicitly computable.
  \item $\lim_{t\rightarrow T_*}\partial_\theta u = -\infty$ and
    furthermore we have the precise control
    \begin{align*}
      \frac{1}{2(T_*-t)} \leqslant |\partial_\theta
      u(\xi(t),t)|=\norm{\partial_\theta u}_{L^\infty} \leqslant
      \frac{1}{T_*-t}.
    \end{align*}
  \item The solution $u$ converges asymptotically in the self-similar
    space to a self-similar solution to the Burgers equation
    $\Psi_\nu$ given by \eqref{eq:psi_nu}. I.e.
    \begin{align*}
      \limsup_{s\rightarrow \infty}\norm{W-\Psi_\nu}_{L^\infty} = 0.
    \end{align*}
  \item The solution has H\"older regularity $\sfrac{1}{3}$ at the
    blowup time, i.e. $u(x,T_*)\in C^{1/3}(\R)$.
  \end{enumerate}
\end{theorem}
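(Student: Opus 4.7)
The plan is to prove the theorem via a bootstrap argument carried out entirely in the modulated self-similar variables \eqref{eq:self_similar_variables}-\eqref{eq:self_similar_w_ansatz}. The strategy reformulates finite-time shock formation for $u$ as global-in-$s$ existence for $W$ on $[s_0,\infty)$, together with convergence to the stable Burgers profile. Since $s_0=-\log\ep$ and $s\to\infty$ corresponds to $t\to\tau(T_*)=T_*$, uniform-in-$s$ control of $W$ in a strong enough norm will simultaneously give all of (i)--(vi). Throughout, the modulation functions $\tau,\xi,\kappa$ are treated as unknowns determined by the constraints \eqref{eq:w_constraints} via the closed-form ODEs \eqref{eq:tau_dot_closed}-\eqref{eq:xi_dot_closed}.

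The concrete steps I would carry out are as follows. First, I would establish short-time existence for the coupled system \eqref{eq:modulated_pde} plus \eqref{eq:tau_dot_closed}-\eqref{eq:xi_dot_closed} in $H^6$, verifying that \eqref{eq:w_constraints} is preserved and that the modulation ODEs stay non-singular on some interval $[s_0,s_1)$. Second, I would posit bootstrap assumptions of the same \emph{form} as the initial conditions \eqref{eq:w0_init_data}-\eqref{eq:init_norm_bounds} but with slightly worse constants: pointwise bounds on $\widetilde W=W-\Psi$ and its first few derivatives in the Taylor, intermediate, and far-field regions demarcated in Section~\ref{sec:trans_dat_thm}, together with $L^\infty$ control of $\partial_y^3 W$ and $L^2$ control of $\partial_y^6 W$. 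Third, I would close the bootstrap by recovering every bound with a strictly smaller constant. The closure splits by region and order: near $y=0$, pointwise bounds come from Taylor expansion using the constraints \eqref{eq:w_constraints}; in the annular region, pointwise bounds come from integrating along Lagrangian characteristics of the transport speed $g_W$ of \eqref{eq:transport_speed}, using the strict outward convexity $\tfrac{3}{2}y$ which pushes trajectories toward the far field; the top-order $L^2$ bound on $\partial_y^6 W$ is obtained via an energy estimate exploiting the damping coefficient coming from the $-\tfrac{1}{2}W$ term and from commutators with $g_W\partial_y$, with the nonlocal $\flap$ contributing a non-negative dissipation term. The modulation ODEs \eqref{eq:tau_dot_closed}-\eqref{eq:xi_dot_closed} are controlled by plugging in the bootstrap bounds and using the decaying prefactor $e^{(3\alpha-1)s}$, which is genuinely small precisely because $\alpha<\sfrac{1}{3}$.

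The main obstacle will be the fractional Laplacian. Unlike the classical Burgers case, $\flap W$ is nonlocal and couples every region through the singular integral \eqref{eq:singular_int}; pointwise control therefore requires splitting the integral into a near-field piece, where one uses $\partial_yW(0,s)=-1$ and $\partial_y^2W(0,s)=0$ to tame the principal value, and a far-field piece, where one exploits the $\jb{y}^{\sfrac{1}{3}}$-type growth of $\widetilde W$ to obtain convergence. Because $2\alpha+1<\sfrac{5}{3}$, the exponents in \eqref{eq:bootstrap_constants}, in particular $m,\ell,q$, are chosen so that the near-field integral against $|y|^{-2\alpha-1}$ and the far-field integral against $|y|^{-2\alpha-1+\sfrac{1}{3}}$ both produce a pointwise estimate for $\flap W$ (and its derivatives) which, after multiplication by the shrinking factor $\beta_\tau e^{(3\alpha-1)s}$, is strictly smaller than the corresponding bootstrap bound. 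This scale-balancing between the algebraic loss in the singular integral and the exponential gain from $\alpha<\sfrac{1}{3}$ is the technical heart of the argument.

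Once the bootstrap is closed, items (i)--(vi) fall out as follows. Global existence in $s$ together with $H^6\hookrightarrow C^5$ locally in $y$ yields (i) via the inverse transformation. Uniqueness of the blowup location (ii) follows because $\tau(t)$ has a unique fixed point $T_*$ under $|\dot\tau|\ll 1$ and $\xi$ is Lipschitz. The quantitative bounds in (iii) come from integrating \eqref{eq:tau_dot_closed}-\eqref{eq:xi_dot_closed} from $s_0=-\log\ep$ using the bootstrap control and smallness of $e^{(3\alpha-1)s_0}=\ep^{1-3\alpha}$. For (iv), differentiating \eqref{eq:self_similar_w_ansatz} gives $\partial_x u(\xi(t),t)=e^s\partial_yW^0(s)=-(\tau(t)-t)^{-1}$ by \eqref{eq:w_constraints}, and the two-sided bound follows from $(1-\delta)(T_*-t)\leq \tau(t)-t\leq (1+\delta)(T_*-t)$ with small $\delta$. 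Asymptotic self-similarity (v) is obtained by observing that as $s\to\infty$ the forcing $\beta_\tau e^{(3\alpha-1)s}\flap W$ vanishes, so $W$ converges along subsequences to a stationary solution of the inviscid self-similar Burgers equation satisfying \eqref{eq:w_constraints}; the modulation parameter selects $\nu=\lim_{s\to\infty}\partial_y^3W^0(s)$ and uniqueness of $\Psi_\nu$ in that class upgrades convergence to full $L^\infty$ convergence. Finally, (vi) is a direct consequence of the scaling \eqref{eq:self_similar_w_ansatz} combined with the uniform $\jb{y}^{\sfrac{1}{3}}$ bound on $W$: at $t=T_*$ the factors $e^{-s/2}$ and $(\tau-t)^{-3/2}$ recombine to produce exactly $|x-x_*|^{\sfrac{1}{3}}$ regularity.
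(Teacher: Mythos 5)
Your overall architecture matches the paper's: a bootstrap in the modulated self-similar variables with region-split closure (Taylor expansion near $y=0$, weighted transport estimates along the Lagrangian trajectories of $g_W$ elsewhere), a top-order $L^2$ energy estimate for $\partial_y^6W$ combined with Gagliardo--Nirenberg to beat the derivative loss, and modulation ODEs controlled through the decaying factor $e^{(3\alpha-1)s}$. However, two of the itemized conclusions are not actually delivered by the arguments you give. For (ii), ``$\tau$ has a unique fixed point and $\xi$ is Lipschitz'' only shows that $T_*$ and $x_*=\xi(T_*)$ are well defined; it does not show that no point $x^\flat\neq x_*$ sees gradient blowup. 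What is needed (and what the paper does) is to note that for $x^\flat\neq x_*$ the self-similar coordinate $(x^\flat-\xi(t))e^{\frac{3}{2}s}$ eventually exceeds $e^{ms}$, so the far-field bootstrap $|\partial_yW|\leqslant 2e^{-s}$ of \eqref{eq:w1_boot_far} gives $\partial_xu(x^\flat,t)=e^{s}\partial_yW\big((x^\flat-\xi(t))e^{\frac{3}{2}s},s\big)=O(1)$ uniformly up to $T_*$; without invoking that decay the uniqueness claim is unsupported.

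For (v), your soft argument (forcing vanishes, extract subsequential limits, limits are stationary solutions of the self-similar Burgers equation, uniqueness of $\Psi_\nu$ upgrades convergence) has a genuine hole: the limiting dynamics is the \emph{time-dependent} equation $\partial_sW-\tfrac12 W+(\tfrac32 y+W)\partial_yW=0$, and nothing in your sketch forces $\omega$-limit points to be equilibria; moreover global $L^\infty$ convergence does not follow from compactness, since both $W$ and $\Psi_\nu$ grow like $|y|^{1/3}$ and locally uniform convergence is all Arzel\`a--Ascoli can give. The paper proves convergence quantitatively: $\partial_y^3W(0,s)$ converges with rate $e^{-\frac{8}{9}(1-3\alpha)s}$ (this defines $\nu$), the difference $W-\Psi_\nu$ vanishes to third order at the origin and is controlled by a fourth-order Taylor expansion on a $\lambda$-dependent core, and a damped transport/Gr\"onwall estimate along trajectories with exponentially decaying forcing (which needs the extra estimates of Lemmas \ref{prop:flap_0} and \ref{prop:flapW_decay} to handle $\flap W$ when $\alpha\geqslant\sfrac14$) propagates the smallness to $\lambda\leqslant|y|\leqslant\lambda^{-1/6}$; letting $\lambda\to0$ gives \eqref{eq:convergence_sup}. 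You would need to supply some such quantitative mechanism. A smaller point: your plan to tame the principal value of $\flap W$ ``using $\partial_yW(0,s)=-1$, $\partial_y^2W(0,s)=0$'' is misplaced, since the near-field singularity at a general point $y$ is controlled by Lipschitz information at $y$ itself; the paper's workhorse is instead the interpolation bound \eqref{eq:interpolation}, $\norm{\flap u}_{L^\infty}\lesssim\norm{u}_{L^\infty}^{1-2\alpha}\norm{u'}_{L^\infty}^{2\alpha}$, fed by the $L^\infty$ bounds obtained from the top-order energy, with direct splitting of the singular integral reserved for the far-field lemma and the convergence section.
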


An example of an initial data satisfying these conditions and leading
to a shock in finite time is $\ep^{1/2}\eta(x)\Psi(x\ep^{-3/2})$ for a
suitable compactly supported smooth function $\eta$.

The gradient blowup outlined in Theorem \ref{thm:stable_main_thm} is
stable and we can relax the initial conditions above and still satisfy
the conclusions of Theorem \ref{thm:stable_main_thm}:

\begin{corollary}\label{cor:open_set}
  There exists an open set of data in the $H^6$ topology such that the
  conclusions of Theorem \ref{thm:stable_main_thm} still hold.
\end{corollary}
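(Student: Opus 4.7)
The plan is to exploit the symmetries of \eqref{eq:fvbeq} recorded in \eqref{eq:symms}, together with a continuity argument in $H^6$, to reduce the corollary to Theorem~\ref{thm:stable_main_thm}. Fix a reference datum $u_0 \in H^6(\R)$ provided by the theorem (for instance the explicit example $\ep^{1/2}\eta(x)\Psi(x\ep^{-3/2})$), and let $v_0 \in H^6(\R)$ with $\norm{v_0-u_0}_{H^6}$ sufficiently small. By the Sobolev embedding $H^6(\R)\hookrightarrow C^5(\R)$ the perturbation is also small in $C^5$, so derivatives up to fifth order are pointwise close. The obstruction to applying Theorem~\ref{thm:stable_main_thm} directly is that a generic $H^6$-perturbation of $u_0$ violates the normalizing identities $v_0(0)=\kappa_0$, $v_0'(0)=-\ep^{-1}=-\norm{v_0'}_{L^\infty}$, $v_0''(0)=0$; but these three conditions can be restored by the spatial-translation and additive-shift parameters in \eqref{eq:symms} combined with a small adjustment of the parameter $\ep$ itself.

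First I would locate a critical point $x_0$ of $v_0'$ close to $0$. The reference datum $u_0'$ attains a non-degenerate strict global minimum at the origin (non-degeneracy is forced by \eqref{eq:u0_d3_at_0}, which gives $u_0'''(0)\neq 0$), so the implicit function theorem applied to $v_0''$ produces a unique nearby zero $x_0$ continuous in $v_0 \in C^2$, with $|x_0| \lesssim \norm{v_0-u_0}_{H^6}$. At this $x_0$ one automatically has $v_0''(x_0)=0$ and, for small enough perturbation, $|v_0'(x_0)|=\norm{v_0'}_{L^\infty}$. Next I would apply the symmetries of \eqref{eq:fvbeq}: translate by $x_0$, use the additive-shift parameter to make the value at the origin equal to $\kappa_0$, and redefine the small parameter to $\ep_{v_0}:=-1/v_0'(x_0)$, which differs from the original $\ep$ by $O(\norm{v_0-u_0}_{H^6})$. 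The resulting normalized datum $\tilde v_0$ then satisfies the pointwise equalities \eqref{eq:w_init_origin} exactly, with $\ep_{v_0}$ in place of $\ep$.

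The second step is to verify that $\tilde v_0$ still satisfies the quantitative initial bounds \eqref{eq:u0_init_data}--\eqref{eq:init_norm_bounds_phys}. Because the normalization parameters differ from the identity by $O(\norm{v_0-u_0}_{H^6})$, each of these bounds, which has the structure ``$\Psi$-like main term plus small remainder'' with explicit numerical constants such as $\tfrac{1}{2}$, $\tfrac{3}{4}$, $2$, $200$, $\tfrac{1}{2}M^{4/7}$, and $\tfrac{1}{2}M^{1/2}$, is preserved with its constant multiplied by a factor arbitrarily close to $1$; the Sobolev norms transform similarly under rescaling close to the identity. Applying Theorem~\ref{thm:stable_main_thm} to $\tilde v_0$ then produces a solution $\tilde v$ developing a shock, and invariance of \eqref{eq:fvbeq} under \eqref{eq:symms} yields the solution $v$ starting from $v_0$ as the image of $\tilde v$ under the inverse symmetry. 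The conclusions (shock formation, H\"older regularity $\sfrac{1}{3}$, convergence to $\Psi_\nu$, blowup-rate sandwich) then transfer verbatim, with $T_*$ and $x_*$ shifted by $O(\norm{v_0-u_0}_{H^6})$.

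The main obstacle is that the numerical constants in \eqref{eq:u0_init_data}--\eqref{eq:init_norm_bounds_phys} appear to be saturated at several stages of the bootstrap for Theorem~\ref{thm:stable_main_thm} (most visibly in \eqref{eq:linf_third_physical} and \eqref{eq:l2_fifth_physical}), so a multiplicative worsening of those constants cannot be absorbed directly. The cleanest remedy is to strengthen the hypotheses of Theorem~\ref{thm:stable_main_thm} slightly (e.g.\ replace $\tfrac{1}{2}$ by $\tfrac{1}{4}$ everywhere) at no cost to the bootstrap argument, so that small normalized perturbations fall within the original hypotheses with room to spare. The corollary then follows from the continuity of the normalization map in $H^6$: the set of $v_0$ whose normalization satisfies the (strict, stronger) hypotheses is an $H^6$-open neighborhood of $u_0$.
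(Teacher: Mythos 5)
Your proposal is correct and follows essentially the same route as the paper: the paper likewise absorbs $u_0(0)$ and $u_0'(0)$ into the free parameters $\kappa_0$ and $\ep$, restores $\partial_x^2 u_0=0$ at a nearby point (via a Taylor expansion of $\partial_x^2 u_0$ plus the intermediate value theorem, using the non-degeneracy $\partial_x^3 u_0(0)\approx 6\ep^{-4}$, where you invoke the implicit function theorem) followed by a spatial translation, and then notes that the quantitative bounds \eqref{eq:u0_init_data}--\eqref{eq:init_norm_bounds_phys} survive a small $H^6$ (hence $W^{5,\infty}$, by Sobolev embedding) perturbation because they can be taken with a pre-factor slightly greater than $1$ at no cost to the bootstrap. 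Your concern about saturated constants is resolved by exactly this last observation, so no genuinely different argument is needed.
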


We prove Theorem \eqref{thm:stable_main_thm} in section
\ref{sec:proof_of_stable_main} and the corollary \ref{cor:open_set} in
section \ref{sec:open_set}.

\section{Bootstraps: Assumptions and Consequences}
\label{sec:bootstraps}
Throughout we use $\lesssim$ to denote a quantity which is bounded by
a constant depending on $\alpha$ or $h$, but not on $s,\ep$, or
$M$. We establish our bootstrap assumptions in
\ref{sec:bootstrap_assumptions}, derive consequences from these
assumptions in \ref{sec:consequences}, close our top order energy
estimate in \ref{sec:top_order_energy}, and establish control on the
Lagrangian trajectories in \ref{sec:lagrangian}. In Section
\ref{sec:boot_linf} we close our bootstraps in $L^\infty$.

\subsection{Bootstrap Assumptions}\label{sec:bootstrap_assumptions}
Recall that $\widetilde{W}=W-\Psi$, where $\Phi$ is the stable Burgers
profile \eqref{eq:stable_burg_explicit}. We assume the bootstraps
\begin{subequations}\label{eq:w0_boot}
  \begin{align}[left ={|\widetilde{W}|(y,s)\leqslant\empheqlbrace}]
    \left(\ep^{\frac{8}{9}(1-3\alpha)}
    + \log M \ep^{\frac{7}{9}(1-3\alpha)} \right)h^4,
    &\qquad 0 \leqslant |y| \leqslant h \label{eq:w0_boot_near} \\
    \ep^q\jb{y}^{\sfrac{1}{3}},
    &\qquad h \leqslant |y| < \infty \label{eq:w0_boot_far}
  \end{align}
\end{subequations}

\begin{subequations}\label{eq:w1_boot}
  \begin{align}[left ={|\partial_y\widetilde{W}|(y,s)
    \leqslant\empheqlbrace}]
    \left(\ep^{\frac{8}{9}(1-3\alpha)}
    + \log M \ep^{\frac{7}{9}(1-3\alpha)} \right)h^3,
    &\qquad 0 \leqslant |y| \leqslant h \label{eq:w1_boot_near} \\
    \varb \jb{y}^{-\sfrac{2}{3}},
    &\qquad h \leqslant |y| \leqslant e^{ms}
    \label{eq:w1_boot_middle} \\
    2 e^{-s},
    &\qquad e^{ms} \leqslant |y|\label{eq:w1_boot_far}
\end{align}
\end{subequations}
on the solution and the first derivative. For the higher derivatives
we assume the following bootstraps on the Taylor region
$0 \leqslant |y| \leqslant h$
\begin{subequations}\label{eq:w234_boot_near}
  \begin{align}
    |\partial_y^2 \widetilde{W}|(y,s) \leqslant
    \left( \ep^{\frac{8}{9}(1-3\alpha)}
    + \log M \ep^{\frac{7}{9}(1-3\alpha)} \right)h^2,
    \label{eq:w2_boot_near} \\
    |\partial_y^3 \widetilde{W}|(y,s) \leqslant
    \left( \ep^{\frac{8}{9}(1-3\alpha)}
    + \log M \ep^{\frac{7}{9}(1-3\alpha)} \right)h,
    \label{eq:w3_boot_near} \\
    |\partial_y^4 \widetilde{W}|(y,s)
    \leqslant \ep^{\frac{7}{9}(1-3\alpha)}, \label{eq:w4_boot_near}
\end{align}
\end{subequations}
for $h$ given by \eqref{eq:s0_h} and $m,\ell,q$ given by
\eqref{eq:bootstrap_constants}.

To compensate for a loss of derivatives, we assume the following
$L^\infty$ control over the third derivative
\begin{align}\label{eq:w3_global}
  \norm{\partial_y^3W}_{L^\infty} \leqslant M^{\vard}.
\end{align}
Finally, we assume the precise bootstrap
\begin{align}\label{eq:w3_boot}
  |\partial_y^3W(0,s)-6|\leqslant 10\ep^{\frac{16}{9}(1-3\alpha)} < 1
\end{align}
which will ensure that our solution converges to the correct
profile. For the modulation variables we assume
\begin{subequations}\label{eq:bootstrap_assumptions}
  \begin{align}
    |\tau(t)|
    &\leqslant \ep^{\frac{8}{9}(1-3\alpha)},
    \quad |\xi(t)|\leqslant 4M\ep, \label{eq:mod_boot_assump} \\
    |\dot{\tau}(t)|
    &\leqslant \ep^{\frac{8}{9}(1-3\alpha)},
      \quad |\dot{\xi}(t)|\leqslant
      5M \label{eq:mod_deriv_boot_assump}
  \end{align}
\end{subequations}
for all $t<T_*$.

\subsection{Consequences of Bootstraps}\label{sec:consequences}
The following are immediate consequences of the bootstraps
\eqref{eq:w0_boot}-\eqref{eq:bootstrap_assumptions}.
\begin{enumerate}
\item The explicit property \eqref{eq:psi_jb} of $\Psi'$ along with
  \eqref{eq:w1_boot} shows that $\partial_yW\in L^\infty_{y,s}(\R)$
  with the following uniform bound
  \begin{align}\label{eq:w1_unif_linf}
    \norm{\partial_yW}_{L^\infty_{y,s}}= 2.
  \end{align}

\item From the bootstrap \eqref{eq:mod_deriv_boot_assump} on
  $\dot{\tau}$ we have the following bound on $\beta_\tau$
  \begin{align}\label{eq:beta_tau_bound}
    \beta_\tau = \frac{1}{1-\dot{\tau}} \leqslant \frac{1}{1 -
    \ep^{\frac{1}{2}(1-3\alpha)}} \leqslant 1 +
    2\ep^{\frac{1}{2}(1-3\alpha)} \leqslant \frac{3}{2}.
  \end{align}
  This bound is true for $0\leqslant \ep < \frac{1}{8}$,
  and we will end up taking $\ep$ much smaller than this.

\item We have the following bound on $\kappa$:
  \begin{align}\label{eq:kappa_bound}
    |\kappa(t)|=|u(\xi(t),t)| \leqslant
    \norm{u}_{L^\infty_{x,t}}\leqslant M.
  \end{align}
  This is shown with the following argument. Equation \eqref{eq:fvbeq}
  satisfies a trivial maximum principle: if
  $\norm{u_0}_{L^\infty}\leqslant M$ than
  $\norm{u}_{L_{\theta,t}^\infty}\leqslant M$. This is an easy
  consequence of $\flap u (x_0,t_0)$ being positive definite at a
  maximum. Then observe that $\kappa(t)=u(\xi(t),t)$ by
  \eqref{eq:self_similar_w_ansatz} and the constraint
  \eqref{eq:w_constraints}.

\item The transformation \eqref{eq:self_similar_w_ansatz} and the
  bound \eqref{eq:kappa_bound} on $\kappa$ give
  \begin{align}\label{eq:lagrangian_est_1}
  \norm{W}_{L^\infty_y} \leqslant e^{s/2}(\norm{u_0}_{L^\infty} + M).
\end{align}
\end{enumerate}

\noindent The bootstrap \eqref{eq:w1_boot} is insufficient to
guarantee that $\partial_yW\in L^2$; we must use the structure of the
equation \eqref{eq:modulated_pde}.

\begin{lemma}\label{lem:uniform_w_1_l2}
  \textbf{(Uniform $L^2$ Bound for $\partial_yW$).}  The bootstraps
  \eqref{eq:w0_boot}-\eqref{eq:w1_boot_far} imply that
  $\partial_y W$ is uniformly $L^2(\R)$ in self-similar time.
\end{lemma}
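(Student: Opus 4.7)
I would prove the uniform $L^2$ bound by a standard energy argument. Applying $\partial_y$ to the modulated PDE \eqref{eq:modulated_pde}, pairing with $\partial_y W$, and integrating over $\R$, I would use $\partial_y g_W = \beta_\tau \partial_y W + \tfrac{3}{2}$ together with an integration by parts on the transport term $g_W \partial_y^2 W \cdot \partial_y W$, and note that $\int \partial_y W \, \flap \partial_y W \, dy = \|(-\Delta)^{\alpha/2} \partial_y W\|_{L^2}^2 \geq 0$. The modulation forcing $-\beta_\tau e^{-s/2} \dot{\kappa}$ drops out after applying $\partial_y$ since it is constant in $y$. After simplification I would arrive at
\begin{align*}
  \frac{d}{ds}\|\partial_y W\|_{L^2}^2 + \frac{1}{2}\|\partial_y W\|_{L^2}^2 + \beta_\tau \int_{\R} (\partial_y W)^3 \, dy + 2 \beta_\tau e^{(3\alpha - 1) s} \|(-\Delta)^{\alpha/2} \partial_y W\|_{L^2}^2 = 0.
\end{align*}
The linear damping and the fractional dissipation are favorable, so closing the estimate reduces to controlling the cubic nonlinearity uniformly in $s$.

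For the cubic term I would decompose $\partial_y W = \Psi' + \partial_y \widetilde{W}$ and exploit that the explicit bound $|\Psi'(y)| \lesssim \jb{y}^{-2/3}$ from Appendix \ref{app:burgers} places $\Psi' \in L^2(\R)$ and makes $\int (\Psi')^3 \, dy$ a bounded $s$-independent constant. After trinomial expansion, each of the three remaining terms contains at least one factor of $\partial_y \widetilde{W}$. I would estimate these region-by-region against the bootstraps \eqref{eq:w1_boot}. In the Taylor region $|y| \leq h$, the small prefactor times $h^3$ together with $|\Psi'| \lesssim 1$ gives a tiny contribution; in the middle annulus $h \leq |y| \leq e^{ms}$, pairing $\ep^{\frac{3}{4}\ell} \jb{y}^{-2/3}$ with the $\jb{y}^{-2/3}$ decay of $\Psi'$ yields an $\jb{y}^{-4/3}$-type integrand which is integrable with a constant depending only on $h$ and $\ep$.

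The main obstacle is the far field $|y| \geq e^{ms}$, where the bootstrap $|\partial_y \widetilde{W}| \leq 2 e^{-s}$ alone is insufficient for $L^2$: literally integrated, $\int_{|y| \geq e^{ms}} (2 e^{-s})^2 \, dy = +\infty$. This is exactly where the structure of the equation, rather than the bootstraps, must enter. At large $|y|$ the transport speed $g_W$ in \eqref{eq:transport_speed} is dominated by $\tfrac{3}{2} y$, so Lagrangian characteristics escape outward as $y(s) \sim y_0 e^{3(s-s_0)/2}$ (to be made precise in Section \ref{sec:lagrangian}). A Duhamel-type integration of the $\partial_y W$ equation along these outgoing trajectories, using the far-field decay of the (compactly supported) initial data $\partial_y W(\cdot, s_0)$, upgrades the crude bootstrap to genuine algebraic decay $|\partial_y W(y, s)| \lesssim \jb{y}^{-2/3}$ in the far field, turning the offending integral into a convergent one. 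With the cubic term thus bounded by a constant $C$ uniform in $s$, Gr\"onwall applied to the energy identity together with the initial estimate $\|\partial_y W(\cdot, s_0)\|_{L^2} \leq 200$ from \eqref{eq:init_norm_bounds} yields $\sup_s \|\partial_y W(\cdot, s)\|_{L^2} < \infty$, proving the lemma.
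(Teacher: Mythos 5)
Your energy identity is exactly the one the paper derives: testing \eqref{eq:stable_1st} against $\partial_y W$, integrating the transport term by parts via $\partial_y g_W = \beta_\tau\partial_y W + \tfrac{3}{2}$, and discarding the nonnegative fractional dissipation, and your treatment of the Taylor region and the middle annulus matches the paper's. The gap is in the far field $|y|\geqslant e^{ms}$. There you propose to upgrade the bootstrap $|\partial_y\widetilde{W}|\leqslant 2e^{-s}$ to a uniform algebraic decay $|\partial_y W|\lesssim \jb{y}^{-2/3}$ by a Duhamel integration along outgoing trajectories, invoking compact support of the initial data. This step is not substantiated and would not follow from a pure transport argument: the nonlocal term $\beta_\tau e^{(3\alpha-1)s}\flap\partial_y W$ destroys compact support instantly and contributes along every trajectory, so to extract spatial decay you would have to control the spatial tails of $\flap\partial_y W$ uniformly in $s$ — something the paper never does (its far-field Lemma \ref{lem:forcing_decay_far} yields only temporal decay $e^{-s}$ of the forcing, not $\jb{y}^{-2/3}$ decay of $\partial_y W$), and which is a genuinely harder estimate than the lemma you are trying to prove.

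The missing observation is that no such pointwise upgrade is needed: in the cubic term you should spend the pointwise bootstrap only once. On $|y|\geqslant e^{ms}$,
\begin{align*}
  \int_{e^{ms}\leqslant |y|}\,|\partial_y W|^3\,dy
  \;\leqslant\; 2e^{-s}\int_{\R}\,(\partial_y W)^2\,dy
  \;\leqslant\; 2\ep\,\norm{\partial_y W}_{L^2}^2,
\end{align*}
since $e^{-s}\leqslant e^{-s_0}=\ep$ (here $\Psi'\in L^2\cap L^3$ handles the profile part). This leaves a small multiple of the unknown quantity $\norm{\partial_y W}_{L^2}^2$ itself, which is absorbed into the linear damping produced by your identity (coefficient $\tfrac12$ after the integration by parts), and Gr\"onwall with \eqref{eq:init_norm_bounds} then closes the estimate. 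This absorption is precisely what the paper means by ``using the structure of the equation''; with it your argument coincides with the paper's proof, while your proposed far-field decay estimate is both unproven and unnecessary.
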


\begin{proof}
  $\partial_yW$ solves \eqref{eq:stable_1st}, which we recall for
  convenience
  \begin{align*}
    \bigg(\partial_s +1 + \beta_\tau \partial_y W\bigg)\partial_yW
    + g_W\partial^2_y W
    =  - \beta_\tau e^{(3\alpha-1)s} \flap \partial_yW.
  \end{align*}
  Taking the $L^2$ inner product with \eqref{eq:stable_1st} gives
  \begin{align}\label{eq:w1_l2}
    \begin{aligned}
      \frac{1}{2}\frac{d}{ds}\norm{\partial_y W}_{L^2}^2
      + \norm{\partial_y W}_{L^2}^2 +
      \int_{\R}\,g_W\partial_y W\partial_y^2W\,dy
      &+ \beta_\tau \int_{\R}\,(\partial_y W)^3\,dy = \\
      &\quad -\beta_\tau
      e^{(3\alpha-1)s}(\flap \partial_y W, \partial_y W)_{L^2}.
    \end{aligned}
  \end{align}
  $\flap$ is self-adjoint on $L^2$ and thus we have
  $(\flap \partial_yW, \partial_yW)_{L^2} \geqslant 0$. The transport
  term satisfies
  \begin{align}\label{eq:w1_energy_1}
    \begin{split}
      \int_{\R}\,g_w\partial_y W\partial_y^2W\,dy &= - \frac{1}{2}
      \int_{\R}\,g_W\frac{d}{dy}(\partial_y W)^2\,dy \\
      &= - \frac{\beta_\tau}{2}\int_{\R}\,(\partial_y W)^3\,dy -
      \frac{3}{4}\norm{\partial_y W}_{L^2}^2.
    \end{split}
  \end{align}
  From the uniform $L^\infty$ bound \eqref{eq:w1_unif_linf} and the
  bootstraps \eqref{eq:w1_boot_middle}, \eqref{eq:w1_boot_far} it
  follows that
  \begin{align}\label{eq:w1_energy_2}
    \begin{split}
      \int_{\R}\,(\partial_yW)^3\,dy
      &= \left(\int_{0\leqslant |y|\leqslant h}
        + \int_{h \leqslant |y| \leqslant e^{ms}}
        + \int_{e^{ms} \leqslant |y|<\infty}\right)\,
      (\partial_yW)^3\,dy \\
      &\leqslant C + \ep^{\frac{9}{4}\ell}
      \int_{h \leqslant |y| \leqslant e^{ms}}\,
      \frac{1}{1+y^2}\,dy + \varc
      \int_{e^{ms}\leqslant |y| < \infty}\,
      e^{-s}|\partial_yW|^2\,dy\\
      &\leqslant C + 2\arctan(e^{ms}) +
      \varc e^{-s}\norm{\partial_y^2 W}_{L^2}^2 \\
    &\leqslant C + \varc \ep\norm{\partial_yW}_{L^2}^2
    \end{split}
  \end{align}
  where $C$ is a constant which changes from line to line.

  Combining \eqref{eq:w1_l2} with
  \eqref{eq:w1_energy_1}-\eqref{eq:w1_energy_2} gives us
  \begin{align*}
     \frac{d}{ds}\norm{\partial_xW}_{L^2}^2
    &= \frac{C\beta_\tau}{2} +
      \left(
      \beta_\tau\varc\ep^b-\frac{1}{4}
      \right)\norm{\partial_xW}_{L^2}^2 \\
    &\leqslant
      \frac{C\beta_\tau}{2} 
    -\frac{1}{8}\norm{\partial_xW}_{L^2}^2
  \end{align*}
  where the last inequality is obtained by taking $\ep$ sufficiently
  small and estimating $\beta_\tau$ using \eqref{eq:beta_tau_bound}.
  Use Gr\"onwall's inequality with the initial data assumption
  \eqref{eq:init_norm_bounds} to conclude.
\end{proof}

\subsection{Closure of Top Order Energy
  Estimate}\label{sec:top_order_energy}
The equation \eqref{eq:fvbeq} suffers an $L^\infty$ loss of
derivatives; we cannot estimate $\partial_x^ku$ in $L^\infty$ without
control of $\partial_x^{k+1}u$ in $L^\infty$. To overcome
this difficulty we use our bootstraps to linearize the evolution of
the sixth derivative in $L^2$ and close the bootstrap
\eqref{eq:w3_global} on the third derivative by interpolation (see
Lemma \ref{lem:gn_interpolation}).

The evolution of $\partial_y^6W$ is given by \eqref{eq:stable_6th}
which we reproduce for convenience
\begin{align*}
  \bigg(\partial_s + \frac{11}{2}
  + 7 \beta_\tau\partial_yW\bigg)\partial_y^6W
  + g_W\partial_y^7W
  = -\beta_\tau\left[e^{(3\alpha-1)s} \flap \partial_y^6W
  + 35\partial_y^3W\partial_y^4W + 21\partial_y^2W \partial_y^5 W
  \right].
\end{align*}
We test in $L^2$ against $\partial_y^6W$
\begin{align*}
  \frac{1}{2} \frac{d}{ds}\norm{\partial_y^6W}_{L^2}^2 
  + \frac{17}{2} \norm{\partial_y^6W}_{L^2}^2
  &= -\COL{red}{\underbrace{
    \int_{\R}\,g_W\partial_y^7W\partial_y^6W\,dy}_{\circI}}
    - 7\beta_\tau\int_{\R}\,\partial_yW(\partial_y^6W)^2\,dy \\
  &\quad- 35\beta_\tau\int_{\R}\,
    \partial_y^3W\partial_y^4W\partial_y^6W\,dy
    -\COL{orange}{\underbrace{
    21\beta_\tau\int_{\R}\,\partial_y^2W \partial^5 W\partial_y^6W\,dy
    }_{\circII}} \\
 &\quad-\COL{blue}{\underbrace{\beta_\tau e^{(3\alpha-1)s}\int_{\R}\,
    \partial_y^6W\flap \partial_y^6W\,dy}_{\circIII}}. \\
\end{align*}
The operator $\flap$ is self-adjoint on $L^2$, allowing us to drop
$\COL{blue}{\circIII}$ from subsequent estimates. Integration by parts
and the supposed $L^2$ decay of $\partial_y^6W$ gives
\begin{align*}
  \COL{red}{\circI}
  &= \frac{1}{2}\int_{\R}\,g_W\frac{\partial}{\partial
    y}(\partial_y^6W)^2\,dy \\
  &=\frac{\beta_\tau}{2}\left(
    \int_{\R}\,W\frac{\partial}{\partial y}(\partial_y^6W)^2\,dy 
    + e^{\sfrac{s}{2}}(\kappa-\dot{\xi})
    \int_{\R}\,\frac{\partial}{\partial y}(\partial_y^6W)^2\,dy\right)
    + \frac{3}{4}\int_{\R}
    y\frac{\partial}{\partial y}(\partial_y^6W)^2\,dy \\
  &=-\frac{\beta_\tau}{2}\int_{\R}\,\partial_yW(\partial_y^6W)\,dy -
    \frac{3}{4}\norm{\partial_y^6W}_{L^2}^2.
\end{align*}
Similarly, we form a total derivative and integrate by parts to obtain
\begin{align*}
  \COL{orange}{\circII} =
  \frac{21}{2} \beta_\tau\int_{\R}\,\partial_y^3W(\partial_y^5W)^2\,dy.
\end{align*}
Collecting like terms and using the bootstrap assumption
\eqref{eq:w3_global} to bound the third derivative in $L^\infty$,
we obtain the energy equality
\begin{align}\label{eq:stable_energy}
  \begin{aligned}
    \frac{d}{ds}\norm{\partial_y^6W}_{L^2}^2 &=
    -\frac{31}{2}\norm{\partial_y^6W}_{L^2}^2
    -13 \beta_\tau
    \int_{\R}\,\partial_yW(\partial_y^6W)^2\,dy\\
    & \qquad 
    -70 \beta_\tau
    \int_{\R}\,\partial_y^3W\partial_y^5W \partial_y^6 W\,dy
    + 21 \beta_\tau \int_{\R}\,\partial_y^3W(\partial_y^5W)^2\,dy
    - 2\norm{\partial_y^6W}_{H^{\alpha/2}} \\
    & \leqslant
    \left(-\frac{31}{2} + 13 \beta_\tau \right)
    \norm{\partial_y^6W}_{L^2}^2 +
    35\beta_\tau M \norm{\partial_y^4 W}_{L^2}
    \norm{\partial_y^6 W}_{L^2}
    + 21 \beta_\tau M \norm{\partial_y^5 W}_{L^2}^2.
    \end{aligned}
\end{align}

We use Galiardo-Nirenberg to interpolate between $\partial_y^6W$ in
$L^2$ and the uniform $L^2$
bound of $\partial_yW$ proven in Lemma \ref{lem:uniform_w_1_l2}:
\begin{align*}
  \norm{\partial_y^4W}_{L^2} \lesssim 
  \norm{\partial_yW}_{L^2}^{\sfrac{2}{5}}
  \norm{\partial_y^6W}_{L^2}^{\sfrac{3}{5}}
  \lesssim
  \norm{\partial_y^6W}_{L^2}^{\sfrac{3}{5}}, \\
  \norm{\partial_y^5W}_{L^2} \lesssim 
  \norm{\partial_yW}_{L^2}^{\sfrac{1}{5}}
  \norm{\partial_y^6W}_{L^2}^{\sfrac{4}{5}}
  \lesssim
  \norm{\partial_y^6W}_{L^2}^{\sfrac{4}{5}}.
\end{align*}

We insert the interpolants into our energy estimate
\eqref{eq:stable_energy}, take $\beta_\tau$ close to 1 via
\eqref{eq:beta_tau_bound}, and apply Young's inequality to find
\begin{align*}\label{eq:stable_energy_final}
  \frac{d}{ds}\norm{\partial_y^6W}_{L^2}^2 \lesssim
  \left( -\frac{31}{2} + 13 \beta_\tau \right)\norm{\partial_y^6W}_{L^2}^2
  +  M \beta_\tau \norm{ \partial_y^6 W}_{L^2}^{\sfrac{8}{5}}
  \lesssim M -
  2\norm{\partial_y^6W}_{L^2}^2.
\end{align*}
Integrating this differential inequality gives
\begin{align}
 \norm{\partial_y^6W}_{L^2}^2 \lesssim
  \left(1-e^{-2c(s-s_0)}\right)
  M + \norm{\partial_y^6W^0}_{L^2}^2e^{-2c(s-s_0)}.
\end{align}
Using our initial data assumption \eqref{eq:init_norm_bounds} and
taking $M$ sufficiently large, we obtain
\begin{align}\label{eq:energy_estimate}
	\norm{\partial_y^6W}_{L^2}^2 \lesssim M \leq M^{\sfrac{3}{2}}.
\end{align}

Finally, by Gagliardo-Nirenberg
\begin{align}\label{eq:3rd_dv_linf}
  \norm{\partial_y^3W}_{L^\infty} \lesssim
  \norm{\partial_yW}_{L^\infty}^{\sfrac{5}{9}}
  \norm{\partial_y^6 W}_{L^2}^{\sfrac{4}{9} } \lesssim
  M^{\sfrac{1}{3}\vard}.
\end{align}
This closes the bootstrap \eqref{eq:w3_global} upon choosing $M$
sufficiently large.

\paragraph{$L^\infty$ Interpolation of the Intermediary
  Derivatives.}
We avoid bootstrapping the global bounds on $\partial_y^2W$ and
$\partial_y^3W$ by interpolation. Gagliardo-Nirenberg gives the
following bounds
\begin{subequations}
  \begin{align}
    \norm{\partial_y^5W}_{L^\infty} \lesssim
    \norm{\partial_yW}_{L^\infty}^{1/9}
    \norm{\partial_y^5W}_{L^2}^{8/9}
    \lesssim M^{\frac{2}{3}}
    \label{eq:5th_dv_linf} \\
    \norm{\partial_y^4W}_{L^\infty} \lesssim
    \norm{\partial_yW}_{L^\infty}^{1/3}
    \norm{\partial_y^5W}_{L^2}^{2/3}
    \lesssim M^{\frac{1}{2}}
    \label{eq:4th_dv_linf} \\
    \norm{\partial_y^2W}_{L^\infty} \lesssim
    \norm{\partial_yW}_{L^\infty}^{7/9}
    \norm{\partial_y^5W}_{L^2}^{2/9}
    \lesssim M^{\frac{1}{6}}
    \label{eq:2nd_dv_linf}
  \end{align}
\end{subequations}

With control on $\partial_y^2 W$ and $\partial_y^3W$ in $L^\infty$
from \eqref{eq:2nd_dv_linf} and \eqref{eq:3rd_dv_linf}, we
can combine the identity \eqref{eq:xi_dot_closed} and interpolation
inequality \eqref{eq:interpolation} for $\flap$ to obtain
\begin{align}\label{eq:lagrangian_est_2}
  e^{\sfrac{s}{2}}|\kappa -\dot{\xi}| \lesssim
  \frac{e^{(3\alpha-1)s}}{5}\norm{\partial_x^2
  W}_{L^\infty}^{1-2\alpha}\norm{\partial_x^3 W}_{L^\infty}^{2\alpha}
  \leqslant e^{\frac{9}{10}(3\alpha-1)s},
\end{align}
sacrificing powers of $\ep$ to absorb the constant. 

\subsection{Lagrangian Trajectories}\label{sec:lagrangian}
We prove upper and lower bounds on the Lagrangian trajectories of
\eqref{eq:modulated_pde} which allow us to convert spatial decay into
temporal decay.

The flowmap evolution of \eqref{eq:modulated_pde} is
\begin{align}
  \frac{d}{ds}\Phi^{y_0} = \frac{3}{2}\Phi^{y_0} +
  \beta_\tau\left(W^{y_0} + e^{\frac{s}{2}}(\kappa-\dot{\xi})\right),
  \label{eq:lagrange_traj}
\end{align}
where $\Phi^{y_0}$ is the trajectory emanating from the label $y_0$
and $W^{y_0}(s):=W(\Phi^{y_0}(s),s)$.

\begin{lemma}\label{lem:optimal_upper}
  \textbf{(Upper Bound on Trajectories).}
  Let $x_0\in \R$, then trajectories are bounded above by
  \begin{align}\label{eq:lagrangian_upper}
      |\Phi^{x_0}(s)|
      &\leqslant \left(|x_0| +
        \frac{3}{2}C\ep^{-\sfrac{1}{2}}\right)e^{\frac{3}{2}(s-s_0)},
  \end{align}
  with $C$ a positive constant depending linearly on $M$.
\end{lemma}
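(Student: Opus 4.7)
The plan is to solve the linear ODE \eqref{eq:lagrange_traj} by Duhamel and estimate the forcing term using the bootstrap consequences already available. The equation has the form
\begin{align*}
  \frac{d}{ds}\Phi^{x_0} - \frac{3}{2}\Phi^{x_0} = G(s), \qquad
  G(s) := \beta_\tau\bigl(W^{x_0}(s) + e^{s/2}(\kappa-\dot{\xi})\bigr),
\end{align*}
so the integrating factor $e^{-3s/2}$ gives the explicit representation $\Phi^{x_0}(s) = x_0\, e^{\frac{3}{2}(s-s_0)} + \int_{s_0}^s e^{\frac{3}{2}(s-\sigma)} G(\sigma)\, d\sigma$. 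The entire task is to bound $|G(\sigma)|$ by (a constant)$\cdot e^{\sigma/2}$ and then carry out the integration.

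For the forcing bound, I would invoke three ingredients already in hand: the bound $\beta_\tau \leq \sfrac{3}{2}$ from \eqref{eq:beta_tau_bound}; the pointwise bound $|W^{x_0}(s)| \leq \norm{W}_{L^\infty_y} \leq e^{s/2}(\norm{u_0}_{L^\infty} + M)$ from \eqref{eq:lagrangian_est_1}; and the modulation estimate $e^{s/2}|\kappa - \dot{\xi}| \leq e^{\frac{9}{10}(3\alpha-1)s}$ from \eqref{eq:lagrangian_est_2}. Since $0 < \alpha < 1/3$ the exponent $\frac{9}{10}(3\alpha-1)$ is negative, so for $s \geq s_0$ the second contribution is bounded by $1$ and is dwarfed by the $e^{s/2}$ growth of the first. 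Therefore $|G(\sigma)| \leq \tfrac{3}{2} C\, e^{\sigma/2}$ for some constant $C$ depending linearly on $M$ (absorbing $\norm{u_0}_{L^\infty}$ and the lower-order piece).

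Inserting this bound into Duhamel, the integral becomes
\begin{align*}
  \left| \int_{s_0}^s e^{\frac{3}{2}(s-\sigma)} G(\sigma)\, d\sigma \right|
  \leq \frac{3}{2} C\, e^{3s/2}\int_{s_0}^s e^{-\sigma}\, d\sigma
  \leq \frac{3}{2} C\, e^{3s/2} e^{-s_0}.
\end{align*}
Using $e^{-s_0} = \ep$ and the identity $e^{3s/2}\ep = e^{\frac{3}{2}(s-s_0)}\ep^{-1/2}$ (since $e^{-3s_0/2}=\ep^{3/2}$), this error term is precisely $\tfrac{3}{2} C\ep^{-1/2} e^{\frac{3}{2}(s-s_0)}$. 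Combined with the homogeneous piece $|x_0|e^{\frac{3}{2}(s-s_0)}$, we obtain the claimed bound \eqref{eq:lagrangian_upper}.

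There is really no main obstacle here; this is a straightforward Duhamel argument once the forcing is controlled. The only subtle bookkeeping is the conversion between the $\ep$-factor produced by the time integration and the $\ep^{-1/2}$-factor in the stated conclusion, which proceeds via the relation $e^{-s_0}=\ep$ together with the overall exponential factor $e^{\frac{3}{2}(s-s_0)}$ pulled out. Every input (the upper bound on $\beta_\tau$, the $L^\infty_y$ bound on $W$, and the modulation estimate on $\kappa-\dot{\xi}$) has already been established, so no new analytic work is required.
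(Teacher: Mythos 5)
Your proposal is correct and follows essentially the same route as the paper: the integrating factor $e^{-3s/2}$ (Duhamel), the bound $\beta_\tau\leqslant\sfrac{3}{2}$ from \eqref{eq:beta_tau_bound}, the $L^\infty_y$ bound \eqref{eq:lagrangian_est_1} on $W$, the modulation estimate \eqref{eq:lagrangian_est_2}, and the same time integration converting $e^{-s_0}=\ep$ into the stated $\ep^{-1/2}$ prefactor. No gaps; the bookkeeping $e^{3s/2}\ep = \ep^{-1/2}e^{\frac{3}{2}(s-s_0)}$ is exactly how the paper's stated constant arises.
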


\begin{proof}
  The flowmap equation \eqref{eq:lagrange_traj} implies that
  \begin{align*}
    \frac{d}{ds}\left[\Phi^{x_0}e^{-3s/2}\right] = \beta_\tau
    e^{-3s/2} \left(W^{x_0} + e^{s/2}(\kappa-\dot{\xi})\right).
  \end{align*}
  We integrate in time, estimating $\beta_\tau$ with
  \eqref{eq:beta_tau_bound}, and apply \eqref{eq:lagrangian_est_1},
  \eqref{eq:lagrangian_est_2} to obtain
  \begin{align*}
    |\Phi^{x_0}(s)|
    &\leqslant |x_0|e^{\frac{3}{2}(s-s_0)} +
      e^{3s/2}\int_{s_0}^s\,e^{-\frac{3}{2}s'}\beta_\tau
      \left(W^{x_0}(s') +
      e^{s'/2}(\kappa-\dot{\xi})\right)\,ds' \\
    &\leqslant \ep|x_0|e^{\frac{3}{2}s} +
      \frac{3}{2}\ep^{\frac{3}{2}s}\int_{s_0}^s\,
      e^{-s'}\left(\norm{u_0}_{L^\infty} + 6M\right) +
      e^{-\frac{3}{2}s'}\ep^{\frac{9}{10}(1-3\alpha)}\,ds',
  \end{align*}
  from which the result follows.
\end{proof}

Next we prove two lower bounds on the Lagrangian trajectories. The
first bound serves only to guarantee that all trajectories emanating
from outside the Taylor region will eventually take off with our
optimal bound \eqref{eq:optimal_lower}.

\begin{lemma}\label{lem:sub_optimal_lemma}
  Let $h \leqslant |x_0|<\infty$. The trajectories are bounded below
  by
  \begin{align}\label{eq:sub_optimal_lower}
    |\Phi^{x_0}(s)| \geqslant |x_0|e^{\frac{1}{5}(s-s_0)}.
  \end{align}
\end{lemma}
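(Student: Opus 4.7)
}
The plan is to analyze the flowmap equation \eqref{eq:lagrange_traj} directly and show that the dilation term $\tfrac{3}{2}\Phi^{x_0}$ dominates the transport correction $\beta_\tau(W^{x_0} + e^{s/2}(\kappa-\dot{\xi}))$ as long as the trajectory remains outside the Taylor region. By the odd symmetry of the stable Burgers profile $\Psi$ and of the initial data configuration, it suffices to treat $x_0>0$; the case $x_0<0$ is symmetric. We will then run a simple continuity/bootstrap argument: assume $\Phi^{x_0}(s)\geq h$ on some maximal interval, derive a differential inequality forcing exponential growth on that interval, and conclude that the interval is in fact all of $[s_0,\infty)$.

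To set up the differential inequality, write $W=\Psi+\widetilde W$. Two structural ingredients of $\Psi$ are crucial: (i) $\Psi$ is odd with $\Psi'(0)=-1$ attaining $\|\Psi'\|_{L^\infty}=1$, so by the fundamental theorem of calculus $|\Psi(y)|\leq |y|$ for all $y$, and in particular $\Psi(\Phi^{x_0})\geq -\Phi^{x_0}$ when $\Phi^{x_0}>0$; (ii) the bootstrap \eqref{eq:w0_boot_far} bounds $|\widetilde W(y,s)|\leq \ep^q\jb{y}^{1/3}$ whenever $|y|\geq h$. Combining these with the $\beta_\tau$ control \eqref{eq:beta_tau_bound} and the modulation bound \eqref{eq:lagrangian_est_2} gives, for $\Phi^{x_0}(s)\geq h$,
\begin{align*}
  \frac{d}{ds}\Phi^{x_0}
  \geq \left(\tfrac{3}{2}-\beta_\tau\right)\Phi^{x_0}
  - \beta_\tau\,\ep^q\jb{\Phi^{x_0}}^{1/3}
  - C\, e^{\frac{9}{10}(3\alpha-1)s}.
\end{align*}
Since $\beta_\tau\leq 1+2\ep^{(1-3\alpha)/2}$, the leading coefficient is at least $\tfrac{1}{2}-O(\ep^{(1-3\alpha)/2})$, comfortably bigger than $\tfrac{1}{5}$ for $\ep$ small.

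It remains to absorb the two error terms into $\tfrac{3}{10}\Phi^{x_0}$, so that the final inequality reads $\dot\Phi^{x_0}\geq \tfrac{1}{5}\Phi^{x_0}$. Split into the regimes $h\leq |\Phi^{x_0}|\leq 1$ and $|\Phi^{x_0}|\geq 1$. In the outer regime $\jb{\Phi^{x_0}}^{1/3}\lesssim |\Phi^{x_0}|^{1/3}$, and the error $\ep^q|\Phi^{x_0}|^{1/3}$ is dominated by $\tfrac{3}{10}\Phi^{x_0}$ whenever $|\Phi^{x_0}|^{2/3}\gtrsim \ep^q$, which is trivially true. In the inner regime $\jb{\Phi^{x_0}}^{1/3}$ is bounded by a universal constant and the error is $O(\ep^q)$, which we need to be smaller than $\tfrac{3}{10}h=\tfrac{3}{10}(\log M)^{-2}$; this is the main place where one uses that $\ep$ is taken after $M$ is fixed, so that any positive power of $\ep$ is much smaller than any negative power of $\log M$. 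The modulation error $e^{9(3\alpha-1)s/10}$ is handled identically since $3\alpha-1<0$.

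With $\dot\Phi^{x_0}\geq \tfrac{1}{5}\Phi^{x_0}$ established on any interval where $\Phi^{x_0}\geq h$, a short continuity argument closes the loop: $\Phi^{x_0}$ is monotonically increasing on that interval, hence remains $\geq x_0\geq h$, so the maximal interval is $[s_0,\infty)$. Gronwall's inequality then produces $\Phi^{x_0}(s)\geq x_0 e^{(s-s_0)/5}$, which is \eqref{eq:sub_optimal_lower}. The main obstacle is the intermediate regime $h\leq |\Phi^{x_0}|\leq 1$: the pointwise bootstrap bound on $\widetilde W$ is only $O(\ep^q)$ there, not $O(\ep^q|\Phi^{x_0}|)$, so closure genuinely requires the parameter hierarchy $\ep\ll h=(\log M)^{-2}$ rather than a naive scaling argument, and it is essential that the linear bound $|\Psi(y)|\leq |y|$ (rather than the weaker $\jb{y}^{1/3}$) be used for the unperturbed part of $W$.
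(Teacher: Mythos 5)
Your argument is correct in substance and is, in effect, the proof the paper omits: the paper simply defers to \cite{yang_2020} (with the modulation bound \eqref{eq:lagrangian_est_2} substituted), and what you write out---decompose $W=\Psi+\widetilde W$, use $|\Psi(y)|\leqslant|y|$ for the principal part, the bootstrap \eqref{eq:w0_boot_far} and \eqref{eq:lagrangian_est_2} for the errors, absorb them into $\tfrac{3}{10}|\Phi^{x_0}|$ on $\{|\Phi^{x_0}|\geqslant h\}$ using the hierarchy $\ep\ll h=(\log M)^{-2}$, then run a continuity/Gr\"onwall argument---is exactly the standard mechanism behind that cited estimate. Your bookkeeping checks out: $\tfrac32-\beta_\tau\geqslant\tfrac12-2\ep^{\frac12(1-3\alpha)}$, the $\ep^q\jb{\Phi}^{1/3}$ term is dominated either by $\ep^q|\Phi|$ (for $|\Phi|\geqslant 1$) or by $O(\ep^q)\ll h$ (for $h\leqslant|\Phi|\leqslant 1$), and the modulation error is $O(\ep^{\frac{9}{10}(1-3\alpha)})$ since $s\geqslant s_0=-\log\ep$, so $\dot\Phi^{x_0}\geqslant\tfrac15\Phi^{x_0}$ on the maximal interval and the trajectory cannot exit the region.

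One phrasing you should fix: the reduction to $x_0>0$ cannot be justified by ``odd symmetry of the initial data configuration''---the data here is generic, not odd (removing that symmetry assumption is precisely why the modulation variables exist), and $W$ itself has no sign symmetry. The reduction is nonetheless harmless, but for a different reason: every input you use is a two-sided absolute-value bound ($|\Psi(y)|\leqslant|y|$, $|\widetilde W|\leqslant\ep^q\jb{y}^{1/3}$ on $|y|\geqslant h$, $e^{\sfrac{s}{2}}|\kappa-\dot\xi|\leqslant e^{\frac{9}{10}(3\alpha-1)s}$, $\tfrac12\leqslant\beta_\tau\leqslant\tfrac32$), so for $\Phi^{x_0}\leqslant-h$ the mirror computation gives $\dot\Phi^{x_0}\leqslant\tfrac15\Phi^{x_0}$, i.e. $\tfrac{d}{ds}|\Phi^{x_0}|\geqslant\tfrac15|\Phi^{x_0}|$, and the same continuity argument applies. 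State it that way rather than invoking a symmetry the solution does not have.
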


\begin{proof}
  The proof is contained in \cite{yang_2020}, replacing the author's
  bound for $e^{\frac{s}{2}}(\kappa-\dot{\xi})$ by our bound
  \eqref{eq:lagrangian_est_2} and taking $\ep$ sufficiently small.
\end{proof}

To close the bootstraps, this $\sfrac{1}{5}$ bound proves
insufficient; we require a stronger lower bound to prove Lemma
\ref{lem:forcing_decay_far} and to close the bootstrap
\eqref{eq:w0_boot_far}. In particular we must have that the
trajectories eventually take off faster than $e^{\frac{3}{5}s}$. The
following estimate is optimal in light of Lemma
\ref{lem:optimal_upper} and the transformation
\eqref{eq:self_similar_variables}.

\begin{lemma}\label{lem:optimal_lemma}
  Let $1 \leqslant |x_0|< \infty$. The trajectories are bounded below
  by
  \begin{align}\label{eq:optimal_lower}
    |\Phi^{x_0}(s)| \geqslant \left(|x_0|^{2/3}-\frac{2
    C}{3}\right) e^{\frac{3}{2}(s-s_0)} > 0.
  \end{align}
  The constant $C$ is
  \begin{align*}
    C=(1+2\ep^{\frac{1}{2}(1-3\alpha)})\left(1 +
    \frac{1}{h^{1/3}}\left(\ep^q\jb{h}+
    \ep^{\frac{1}{2}(1-3\alpha)}\right)\right).
  \end{align*}
\end{lemma}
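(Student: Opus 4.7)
The plan is to linearize the flowmap equation \eqref{eq:lagrange_traj} via the substitution $\eta(s) = |\Phi^{x_0}(s)|^{2/3}$, exploiting the fact that both the Burgers profile $\Psi$ and the far field bootstrap \eqref{eq:w0_boot_far} on $\widetilde{W}$ grow like $\jb{y}^{\sfrac{1}{3}}$. I work under the assumption $x_0 > 0$, the opposite sign being symmetric. Since $|x_0| \geqslant 1 > h$, Lemma \ref{lem:sub_optimal_lemma} already guarantees $\Phi^{x_0}(s) \geqslant |x_0| e^{(s-s_0)/5} \geqslant 1$ for all $s \geqslant s_0$, so the trajectory never changes sign and remains in the far-field region where the bootstrap on $\widetilde{W}$ applies.

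Next I estimate the right-hand side of \eqref{eq:lagrange_traj}. Writing $W = \Psi + \widetilde{W}$, using $|\Psi(y)| \leqslant \jb{y}^{\sfrac{1}{3}}$ from the explicit Burgers profile \eqref{eq:stable_burg_explicit}, and invoking \eqref{eq:w0_boot_far} yields
\begin{align*}
  |W(\Phi,s)| \leqslant (1+\ep^q)\jb{\Phi}^{\sfrac{1}{3}}
  \leqslant \Phi^{\sfrac{1}{3}} + \frac{1}{h^{\sfrac{1}{3}}}\left(\ep^q\jb{h}+\ep^{\frac{1}{2}(1-3\alpha)}\right)\Phi^{\sfrac{1}{3}},
\end{align*}
after splitting off the subdominant additive constants against $\Phi^{\sfrac{1}{3}}$ using $\Phi \geqslant h$. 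The modulation correction is bounded by \eqref{eq:lagrangian_est_2}: $\beta_\tau e^{s/2}|\kappa - \dot\xi| \leqslant \beta_\tau \ep^{\frac{9}{10}(1-3\alpha)}$, and is absorbed into the same parenthetical constant.

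Now, setting $\eta = \Phi^{2/3}$ and computing $\dot\eta = \tfrac{2}{3}\Phi^{-1/3}\dot\Phi$, the flowmap inequality $\dot\Phi \geqslant \tfrac{3}{2}\Phi - \beta_\tau|W(\Phi,s)| - \beta_\tau e^{s/2}|\kappa-\dot\xi|$ collapses into the linear differential inequality
\begin{align*}
  \frac{d\eta}{ds} \geqslant \eta - \frac{2}{3}\beta_\tau\left(1 + \frac{1}{h^{\sfrac{1}{3}}}\left(\ep^q\jb{h}+\ep^{\frac{1}{2}(1-3\alpha)}\right)\right) =: \eta - \frac{2C}{3},
\end{align*}
with $C$ matching the constant in the lemma statement via \eqref{eq:beta_tau_bound}. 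Multiplying through by the integrating factor $e^{-s}$ and integrating from $s_0$ to $s$ produces
\begin{align*}
  \eta(s) \geqslant \left(|x_0|^{2/3} - \tfrac{2C}{3}\right)e^{s-s_0} + \tfrac{2C}{3},
\end{align*}
from which the claimed lower bound on $|\Phi^{x_0}(s)|$ follows. The strict positivity $|x_0|^{2/3} > 2C/3$ is guaranteed by $|x_0| \geqslant 1$ together with the smallness of $C$ (driven by the factors of $\ep^q$ and $\ep^{\frac{1}{2}(1-3\alpha)}$) for $M$ large and $\ep$ small.

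The main obstacle is ensuring that the trajectory remains in the far-field region where the bootstrap on $\widetilde{W}$ applies and never crosses zero; this is handled cleanly by invoking Lemma \ref{lem:sub_optimal_lemma} as an a priori input rather than attempting to rederive a lower bound from scratch. Beyond this bookkeeping, the substitution $\eta = \Phi^{2/3}$ is the decisive algebraic trick, linearizing a nonlinear ODE with $\Phi^{1/3}$ forcing into one whose growth rate exactly matches the linear $\tfrac{3}{2}\Phi$ term after the chain-rule factor of $\tfrac{2}{3}\Phi^{-1/3}$ is applied.
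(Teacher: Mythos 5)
Your argument is correct and is essentially the paper's own proof in different clothing: the paper multiplies the flowmap by $\Phi^{x_0}$ and integrates the resulting Bernoulli-type inequality $\frac{d}{ds}|\Phi^{x_0}|^2 - 3|\Phi^{x_0}|^2 \geqslant -2C|\Phi^{x_0}|^{4/3}$, which is exactly your substitution $\eta = |\Phi^{x_0}|^{2/3}$, and it uses the same inputs \eqref{eq:psi_0_13}, \eqref{eq:w0_boot_far}, \eqref{eq:lagrangian_est_2}, and \eqref{eq:beta_tau_bound}. The only bookkeeping slip is bounding $|\Psi|$ by $\jb{y}^{1/3}$ rather than the sharper $|y|^{1/3}$ from \eqref{eq:psi_0_13}, which is needed to get the leading coefficient $1$ and hence the stated constant $C$; your invocation of Lemma \ref{lem:sub_optimal_lemma} to keep the trajectory away from the origin is a harmless addition that the paper leaves implicit.
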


\begin{proof}
  Multiply the flowmap \eqref{eq:lagrange_traj} by $\Phi^{x_0}$ to
  obtain
  \begin{align*}
    \frac{d}{ds}|\Phi^{x_0}|^2 = 3|\Phi^{x_0}|^2 +
    2\beta_\tau\Phi\left(W + e^{\frac{s}{2}}(\kappa-\dot{\xi})\right).
  \end{align*}
  Recall that $\Psi$ is the self-similar solution to Burgers equation
  \eqref{eq:burgers_similarity_equation} and observe that in light of \eqref{eq:psi_0_13} and
  \eqref{eq:w0_boot_far} we have
  \begin{align*}
    |W(x,s)|
    \leqslant |\Psi(x)| + |\widetilde{W}(x,s)|
    \leqslant |x|^{1/3} + \ep^q\jb{x}^{1/3}
    \leqslant \left(1 +
      \ep^{q}\frac{\jb{h}^{1/3}}{h^{1/3}}\right)|x|^{1/3}.
  \end{align*}
  In a similar vein, \eqref{eq:lagrangian_est_2} implies that
  \begin{align*}
    |e^{\frac{s}{2}}(\kappa-\dot{\xi})| \leqslant
    \frac{\ep^{\frac{1}{2}(1-3\alpha)}}{h^{1/3}}|x^{1/3}|.
  \end{align*}
  Estimating $\beta_\tau$ as in \eqref{eq:beta_tau_bound} and taking
  $C$ as in the statement of the lemma, we obtain the differential
  inequality
  \begin{align*}
    \frac{d}{ds}|\Phi^{x_0}|^2 - 3|\Phi^{x_0}|^2 \geqslant -
    2C|\Phi^{x_0}|^{4/3}.
  \end{align*}
  This inequality is separable in the variable
  $|\Phi^{x_0}|^2e^{-3s}$, and integrating from $s_*$,
  $s_0\leqslant s_* < s$ yields \eqref{eq:optimal_lower} upon taking
  $\ep$ sufficiently small.
\end{proof}

Together, these two lemmas show that all trajectories with starting
labels $h \leqslant |x_0|$ will eventually take off like
$e^{\frac{3}{2}s}$, i.e. $|\Phi^{x_0}(s)|\lesssim e^{\frac{3}{2}s}$
whenever $h\leqslant |x_0|$.

\section{Bootstraps: Closing The $L^\infty$ Estimates}
\label{sec:boot_linf}
\subsection{Modulation Variables}
Our interpolation inequality \eqref{eq:interpolation} for $\flap$ and
the estimates \eqref{eq:w1_unif_linf}, \eqref{eq:2nd_dv_linf} give
\begin{align}\label{eq:tau_dot_decay}
  |\dot{\tau}|
  \leqslant e^{(3\alpha-1)s}\norm{\partial_yW}_{L^\infty}^{1-2\alpha}
  \norm{\partial_y^2 W}_{L^\infty}^{2\alpha}
  \lesssim M^{\frac{1}{3}\alpha}e^{(3\alpha - 1)s}
  \leqslant \frac{1}{2}e^{\frac{8}{9}(3\alpha-1)s}
\end{align}
upon choosing $\ep$ sufficiently small. We apply the fundamental
Theorem of calculus and recall that $\tau(-\ep)=0$ to find
\begin{align*}
  |\tau(t)| \leqslant \int_{-\ep}^t\,|\dot{\tau}(t')|\,dt'
  \leqslant \frac{1}{2}\ep^{\frac{8}{9}(1-3\alpha)}(T_*+\ep)
  \leqslant \frac{1}{2}\ep^{1-3\alpha} + \frac{1}{2}\ep^{1 +
  \frac{1}{2}(1-3\alpha)}
  \leqslant \frac{3}{4}\ep^{\frac{8}{9}(1-3\alpha)}.
\end{align*}
This closes the $\tau$ bootstraps \eqref{eq:mod_boot_assump} and
\eqref{eq:mod_deriv_boot_assump}.

The $\xi$ bootstrap is equally simple. By the identity
\eqref{eq:xi_dot_closed},
\begin{align}\label{eq:xi_estimate}
  \begin{aligned}
  |\dot{\xi}|
  &\leqslant |\kappa| +
  \frac{e^{s(3\alpha-\frac{3}{2})}}{|\partial_y^3W(0,s)|}(\flap
    \partial_y^2 W)(0,s) \\
  &\leqslant M +
    \frac{1}{5}e^{(3\alpha-\frac{3}{2})s}M^{\frac{1}{6}+\frac{\alpha}{3}}
  \\
  &\leqslant 2M,
  \end{aligned}
\end{align}
where we have chosen $\ep$ sufficiently small and $M$ sufficiently
large. Integrating \eqref{eq:xi_estimate} in time and using the
fundamental theorem of calculus, recalling that $|\xi(-\ep)|=0$, 
\begin{align*}
  |\xi(t)| \leqslant \int_{-\ep}^t \,|\dot{\xi}|\,dt' \leqslant
  2M(T_*+\ep) \leqslant 2M(\ep^{1+2\alpha}+\ep) \leqslant 4M\ep.
\end{align*}
This closes the two $\xi$ bootstraps
\eqref{eq:mod_boot_assump} and \eqref{eq:mod_deriv_boot_assump}.

\subsection{Taylor Region}
\paragraph*{(Fourth Derivative on $0 \leqslant |y|\leqslant h$).}
The fourth derivative of $\widetilde{W}$ evolves
according to \eqref{eq:diff_4_deriv}, which we recall here
\begin{align*}
  \big(\partial_s + \underbrace{\frac{11}{2} +
    5\beta_\tau \partial_yW}_{\mathcal{D}(y)}\big)
  \partial_y^4\widetilde{W}
  + g_W\partial_y^4\widetilde{W}
  &= - \beta_\tau\mathcal{F}.
\end{align*}
$\mathcal{D}$ is the damping and $\mathcal{F}$ is the forcing,
which is given explicitly in \eqref{eq:fourth_diff_force}.

Using \eqref{eq:w1_unif_linf} and \eqref{eq:beta_tau_bound} we bound
the damping below by
\begin{align}\label{eq:fourth_taylor_damping}
  \mathcal{D}
  \geqslant \frac{11}{2} - 5\beta_\tau \geqslant \frac{1}{4}.
\end{align}

The forcing is given by
\begin{align}\label{eq:fourth_diff_force}
  \begin{split}
  \mathcal{F}
  :=- 
  \underbrace{\beta_\tau e^{(3\alpha-1)s}\flap \partial_y^4W}_{\circI}
  &+ \underbrace{\beta_\tau\left(\widetilde{W}\Psi^{(5)}
    + 4\partial_y\widetilde{W}\Psi^{(4)}
    + 8\partial_y^2\widetilde{W}\Psi^{(3)}
    + 10 \partial_y^3\widetilde{W}\Psi''
    + 11\partial_y^2\widetilde{W}\partial_y^3\widetilde{W}
  \right)}_{\circII} \\
&\qquad+ \underbrace{
  \beta_\tau e^{\sfrac{s}{2}} (\kappa -\dot\xi)\partial_y^5\Psi}_{\circIII}
+ \underbrace{\beta_\tau\dot{\tau}\left(\Psi\partial_y^5\Psi
    + 5\Psi'\Psi^{(4)} + 10\Psi''\Psi^{(3)}\right)}_{\circIV}
  \end{split}
\end{align}
Term $\circI$ is estimated by combining our interpolation estimate
\eqref{eq:interpolation} on the fractional Laplacian with
Gagliardo-Nirenberg
\begin{align*}
  \left|\circI\right| 
  \lesssim \beta_\tau e^{(3\alpha-1)s}
  \norm{\partial_y W}_{L^\infty}^{\frac{1}{3} - \frac{4}{9} \alpha}
  \norm{\partial_y^6 W}_{L^2}^{\frac{2}{3} + \frac{4}{9}\alpha }
  \leqslant \beta_\tau e^{(3\alpha-1)s} M^{\frac{1}{2} +
  \frac{\alpha}{3}}
  \leqslant \ep^{\frac{8}{9}(1-3\alpha)},
\end{align*}
where in the second inequality we have used \eqref{eq:w1_unif_linf}
and \eqref{eq:energy_estimate} and in the final inequality we have
sacrificed a power of $\ep$. Recalling that $h=(\log M)^{-2}$,
the term $\circII$ can be bounded by the bootstrap assumptions
\eqref{eq:w0_boot}-\eqref{eq:w234_boot_near} in the Taylor region
\begin{align*}
  |\circII|
  &\lesssim \ep^{\frac{8}{9}(1-3\alpha)}
    + \log M \ep^{\frac{7}{9}(1-3\alpha)} (h^4 + h^3 + h^2 + h) \\
  &\lesssim \ep^{\frac{8}{9}(1-3\alpha)}
    + (\log M)^{-1} \ep^{\frac{7}{9}(1-3\alpha)}.
\end{align*}
The estimate \eqref{eq:lagrangian_est_2} along
with the explicit property \eqref{eq:psi_jb} on $\Psi^{(5)}$ imply
that
\begin{align*}
  |\circIII| = \left|e^{\sfrac{s}{2}}(\kappa - \dot\xi)\Psi^{(5)}\right| 
  \lesssim \ep^{\frac{9}{10}(1-3\alpha)}
  \leqslant \ep^{\frac{8}{9}(1-3\alpha)}.
\end{align*}
The term $\circIV$ is quadratic in $\Psi$ so
\eqref{eq:bootstrap_assumptions} and \eqref{eq:psi_jb} give
\begin{align*}
  \left|\circIV\right|
  \lesssim \ep^{\frac{1}{2}(1-3\alpha)}
  \leqslant \ep^{\frac{8}{9}(1-3\alpha)}.
\end{align*}

In light of \eqref{eq:fourth_taylor_damping} and our forcing
estimates, we write
\begin{align*}
  \partial_s\partial_y^4\widetilde{W} +
  \frac{1}{4}\partial_y^4\widetilde{W} +
  g_W\partial_y^4\widetilde{W} \geqslant -\beta_\tau \mathcal{F}.
\end{align*}
Composing this inequality with the flowmap $\Phi^{y_0}$ and
integrating gives us
\begin{align}\label{eq:w4_closure}
  \begin{split}
    |\partial_y^4 \widetilde{W}^{y_0}(s)| &\leqslant |\partial_y^4
    \widetilde{W}^{y_0}(s)| e^{-\frac{s}{4}}
    + e^{-\frac{s}{4}}
    \int_{s_0}^s\,e^{\frac{s}{4}} \mathcal{F}^{y_0}\,ds' \\
    & \leqslant \ep^{1-3\alpha} + C (\ep^{\frac{8}{9}(1-3\alpha)} +
    (\log M)^{-1} \ep^{\frac{7}{9}(1-3\alpha)}) \\
    & \leqslant \frac{1}{2}\ep^{\frac{7}{9}},
  \end{split}
\end{align}
where we have used the Lagrangian trajectory lower bound
\eqref{eq:optimal_lower} and the initial condition
\eqref{eq:w234_init_data}. This closes the bootstrap
\eqref{eq:w4_boot_near} for $\partial_y^4\widetilde{W}$.

\paragraph*{(Third Derivative at $0$).}
The third derivative evolves according
to equation \eqref{eq:stable_3rd}, which reads
\begin{align*}
  \bigg(\partial_s + 4 + 4\beta_\tau \partial_yW\bigg)\partial_y^3W
  + g_W\partial_y^4W
  =  -\beta_\tau \left[e^{(3\alpha-1)s}\flap \partial_y^3W +
  3(\partial_y^2 W)^2\right].
\end{align*}
Setting $y=0$ and using the constraints \eqref{eq:w_constraints} we
obtain the identity
\begin{align*}
  \partial_s\partial_y^3W^0
  &= \COL{red}{\underbrace{
    4\dot{\tau}\beta_\tau \partial_y^3 W^0
    }_{\circI}}
    - \COL{blue}{\underbrace{
    \beta_\tau e^{\sfrac{s}{2}}(\kappa-\dot{\xi})\partial_y^4W^0
    }_{\circII}}
    -\COL{orange}{\underbrace{
    \beta_\tau e^{(3\alpha-1)s}\flap [\partial_y^3W]^0
    }_{\circIII}}.
\end{align*}
Using our $\beta_\tau$ estimate \eqref{eq:beta_tau_bound}, the fact
that $|\partial_y^3W^0(s)|\leqslant 7$, and the $\dot{\tau}$ bootstrap
closure \eqref{eq:tau_dot_decay}, we find
\begin{align*}
	|\COL{red}{\circI}| \leqslant 28 e^{\frac{8}{9}(3\alpha-1)s}.
\end{align*}
Next we apply the estimates \eqref{eq:lagrangian_est_2} and
\eqref{eq:beta_tau_bound}, as well as our $L^\infty$ control
\eqref{eq:4th_dv_linf} on $\partial_y^4W$ to find
\begin{align*}
  |\COL{blue}{\circII}|
  \lesssim 2M^{\frac{1}{2}}e^{\frac{9}{10}(3\alpha-1)s}
  \leqslant e^{\frac{8}{9}(3\alpha-1)s}
\end{align*}
upon taking $\ep$ sufficiently small. Finally, we estimate
\begin{align*}
  |\COL{orange}{\circIII}|
  \lesssim 2M^{\frac{1}{3}(1+\alpha)}
  \left(1-\alpha\right)e^{(3\alpha-1)s}
  \leqslant e^{\frac{8}{9}(3\alpha-1)s}
\end{align*}
by applying our interpolation estimate \eqref{eq:interpolation} for
the fractional Laplacian, \eqref{eq:beta_tau_bound}, our $L^\infty$
bounds \eqref{eq:3rd_dv_linf}, \eqref{eq:4th_dv_linf}, and taking $\ep$
sufficiently small.

We record the following fact
\begin{align}\label{eq:w3_time_decay}
  |\partial_s\partial_y^3 W(0,s)| \leqslant
  3e^{\frac{8}{9}(3\alpha-1)s} \leqslant 3\ep^{\frac{8}{9}(1-3\alpha)},
\end{align}
which we will need to use later in our proof.

Recalling that $\partial_y^3W(0,s_0)=6$ and applying the Fundamental
theorem of calculus to the estimate above, we see that
\begin{align}\label{eq:third_der_diff_small}
  \begin{aligned}
    |\partial_y^3W(0,s)-6|
    &\leqslant
    \int_{-\ep}^{t}\,3\ep^{\frac{8}{9}(1-3\alpha)}\,dt' \leqslant
    3\ep^{\frac{8}{9}(1-3\alpha)}(T_*+\ep) \leqslant
    3\ep^{(\frac{8}{9}+\frac{8}{9})(1-3\alpha)}+3\ep^{(1+\frac{8}{9})(1-3\alpha)} \\
    &\leqslant
    6\ep^{\frac{16}{9}(1-3\alpha)}.
  \end{aligned}
\end{align}
Taking $\ep$ small closes the bootstrap \eqref{eq:w3_boot}.

\paragraph*{(Zeroth, First,  Second, and Third Derivative on $0
  \leqslant |y|\leqslant h$).}
By the fundamental theorem of calculus, our bootstrap
\eqref{eq:w3_boot}, and the estimate \eqref{eq:w4_closure},
\begin{align*}
  |\partial_y^3 \widetilde{W}|
  &= \left|\partial_y^3 \widetilde{W}(0,s) +
    \int_{0}^y\,\partial_y^4\widetilde{W}(y',s)\,dy' \right| \\
  & \leqslant 6\ep^{\frac{9}{10}(1-3\alpha)}
    + \frac{1}{2} \ep^{\frac{7}{9}(1-3\alpha)}h \\
  & \leqslant \frac{1}{2} \left( \ep^{\frac{8}{9}(1-3\alpha)}
    + \log M \ep^{\frac{7}{9}(1-3\alpha)} \right)h,
\end{align*}
which closes \eqref{eq:w3_boot_near} on
$\partial_y^3\widetilde{W}$. Recall that our constraints
\eqref{eq:w_constraints} at the origin are
\begin{align*}
	\partial^j_y \widetilde{W}(0,s) = 0
\end{align*}
for $j = 0, 1, 2$. We can iterative apply the fundamental theorem of
calculus to estimate
\begin{align*}
  |\partial_y^j \widetilde{W}|
  &= \left|\partial_y^j\widetilde{W}(0,s) +
    \int_{0}^y\,\partial_y^{j+1} \widetilde{W}(y',s)\,dy'\right|
    \leqslant \frac{1}{2} \left(\ep^{\frac{8}{9}(1-3\alpha)}
    + \log M \ep^{\frac{7}{9}(1-3\alpha)}\right)h^{4-j},
\end{align*}
which closes the bootstraps \eqref{eq:w0_boot_near},
\eqref{eq:w1_boot_near}, and \eqref{eq:w2_boot_near}.

\subsection{Outside the Taylor Region}
To close the bootstraps on the rest of $\R$ we will utilize the
trajectory framework presented in Appendix
\ref{app:transport_estimates}.

\paragraph*{(Zeroth Derivative on $h\leqslant |y| < \infty$).}
We consider the weighted variable
$V:=\jb{x}^{-\sfrac{1}{3}}\widetilde{W}$. The evolution equation for
$V$ is obtained from \eqref{eq:diff} after a short
computation
\begin{align}\label{eq:w0_v_equation}
  \Big(\partial_s - \underbrace{\frac{1}{2} + \beta_\tau\Psi' +
  \frac{g_W}{3}y\jb{y}^{-2}}_{\mathcal{D}(y)}\Big)V + g_WV' = -
  \beta_\tau\jb{y}^{-\frac{1}{3}}F_{\widetilde{W}},
\end{align}
where the forcing is given below in \eqref{eq:Wtilde_forcing}.
Closing the bootstrap \eqref{eq:w0_boot_far} now amounts to showing
that $|V(x,s)|<\ep^{q}$ uniformly in self similar time, for
$h\leqslant |x| <\infty$. From Appendix \ref{app:transport_estimates},
if we can control the damping from below and the forcing from above on
our region $h\leqslant |x| <\infty$, then we can establish global (in
time) estimates by carefully following the trajectories through our
region.

We begin by bounding our damping below by \eqref{eq:beta_tau_bound}, \eqref{eq:psi_jb}, \eqref{eq:lagrangian_est_2}
and our bootstrap assumption \eqref{eq:w0_boot_far},
\begin{align*}
  \mathcal{D}(y)
  &\geqslant \COL{purple}{\frac{1}{2}} -
    \COL{red}{(1+2\ep^{\frac{1}{2}(1-3\alpha)})\jb{y}^{-2/3}}
    - \frac{1}{3}\left(\COL{blue}{\beta_\tau W}
    +\COL{orange}{\beta_\tau e^{s/2}(\kappa-\dot{\xi})}
    + \COL{purple}{\frac{3}{2}y}\right)y\jb{y}^{-2} \\
  &\geqslant
    \COL{purple}{\frac{1}{2}\jb{y}^{-2}}
    - \COL{red}{(1+2\ep^{\frac{1}{2}(1-3\alpha)})\jb{y}^{-2/3}}
    - \COL{blue}{\frac{2}{3}(1+\ep^{q})
    y\jb{y}^{-5/3}}
    -\COL{orange}{\ep^{\frac{1}{2}(1-3\alpha)}|y|\jb{y}^{-2}} \\
  &\geqslant -
    (2+3\ep^{\frac{1}{2}(1-3\alpha)}+\frac{2}{3}\vara)\jb{y}^{-\sfrac{2}{3}}
  \\
  &\geqslant -3\jb{y}^{-2/3}.
\end{align*}
This inequality holds upon taking $\ep$ sufficiently small.

Let $s_*$ be the first time that the Trajectory $\Phi^{x_0}$ enters
the region $h\leqslant |y|< \infty$. By composing the damping
with the lower bound of the trajectories and integrating in time we
find that
\begin{align}\label{eq:w0_damping}
  \begin{split}
  \int_{s_*}^s\,\jb{\Phi^{y_0}(s')}^{-\frac{2}{3}}\,ds'
  &\leqslant
  \int_{s_*}^\infty\,\jb{|y_0|e^{\frac{2}{5}(s-s_*)}}^{-\sfrac{2}{3}}\,ds'
    \leqslant \frac{13}{2}\log\left(\frac{1}{h}\right),
  \end{split}
\end{align}
which is the same estimate obtained in \cite{buckmaster_2019_1}. Using
\eqref{eq:w0_damping}, we set
\begin{align*}
  \lambda_{\mathcal{D}} :=
  e^{-\int_{s_0}^s\,\mathcal{D}(\Phi^{y_0}(s')\,ds'} \leqslant
  e^{\frac{39}{2}\log\left(\frac{1}{h}\right)}
  = h^{-\frac{39}{2}}.
\end{align*}

Next we bound the forcing above in $L^\infty$. Recall that the forcing
for equation \eqref{eq:w0_v_equation} is given by \eqref{eq:diff}
\begin{align}\label{eq:Wtilde_forcing}
  F_{\widetilde{W}} =
  \COL{red}{\underbrace{e^{-\frac{s}{2}}\dot{\kappa}}_{\circI}}
  + \COL{blue}{\underbrace{e^{(3\alpha-1)s}\flap W}_{\circII}}
  + \COL{orange}{\underbrace{\Psi'(\dot{\tau}\Psi)}_{\circIII}}
  +
  \COL{purple}{\underbrace{
  \Psi'e^{\frac{s}{2}}(\kappa-\dot{\xi})}_{\circIV}}.
\end{align}
We also note that
$\jb{y}^{-\sfrac{1}{3}}\circ\Phi^{y_0}(s)\leqslant
e^{-\frac{1}{2}(s-s_0)}$ as a consequence of our lower bound
\eqref{eq:optimal_lower} on the trajectories.

We estimate term by term. For the first term we use identity
\eqref{eq:xi_dot_closed}, the interpolation inequality
\eqref{eq:interpolation}, \eqref{eq:beta_tau_bound},
\eqref{eq:lagrangian_est_1}, and take $\ep$ sufficiently small
\begin{align*}
  \beta_\tau|\jb{\Phi^{y_0}(s)}^{-\sfrac{1}{3}}\COL{red}{\circI}|
  &\lesssim
    \beta_\tau\jb{e^{\frac{3}{2}(s-s_*)}}^{-\sfrac{1}{3}}e^{(3\alpha-1)s}
    \left[\flap
    W^0 - \frac{\flap \partial_y^2 W^0}{\partial_y^3 W^0}\right] \\
  &\lesssim \beta_\tau\ep^{\frac{1}{2}}e^{(3\alpha-\frac{3}{2})s}\left(
    \norm{W}^{1-2\alpha}_{L^\infty}\norm{\partial_yW}^{2\alpha}_{L^\infty}
    + \frac{1}{5}\norm{\partial_y^2W}^{1-2\alpha}_{L^\infty}
    \norm{\partial_y^3W}^{2\alpha}_{L^\infty}
    \right) \\
  &\lesssim \ep^{\frac{1}{2}}e^{(3\alpha-\frac{3}{2})s}
    \left(e^{\frac{s}{2}-\alpha s} +
    \frac{1}{5}M^{\frac{1}{6}-\frac{\alpha}{3}}\right) \\
  &\leqslant \frac{1}{4}e^{(2\alpha-1)s}.
\end{align*}
Note that the $e^{\sfrac{s}{5}}$ lower bound
\eqref{eq:sub_optimal_lower} on the trajectories is insufficient to
close the above estimate.

For the next term we use our interpolation estimate
\eqref{eq:interpolation}, \eqref{eq:lagrangian_est_1},
\eqref{eq:beta_tau_bound}, and sacrifice powers of $\ep$ to get
\begin{align*}
  \beta_\tau|\jb{\Phi^{y_0}(s)}^{-\sfrac{1}{3}}\COL{blue}{\circII}|
  \lesssim \ep^{\sfrac{1}{2}}e^{(3\alpha-\frac{3}{2})s}
    \norm{W}_{L^\infty}^{1-2\alpha}
    \norm{\partial_yW}_{L^\infty}^{2\alpha}
  \leqslant \frac{1}{4}e^{(2\alpha-1)s}.
\end{align*}
We estimate the third term by using \eqref{eq:psi_jb} twice,
\eqref{eq:beta_tau_bound}, our bootstrap assumption
\eqref{eq:bootstrap_assumptions} on $\dot{\tau}$, and taking $\ep$
small
\begin{align*}
  \beta_\tau
  |\jb{\Phi^{y_0}(s)}^{-\sfrac{1}{3}}\COL{orange}{\circIII}|
  \lesssim \dot{\tau}\ep^{\sfrac{1}{2}}e^{-\frac{s}{2}}
    \jb{\Phi^{y_0}(s)}^{-\frac{1}{3}}
  \leqslant \frac{1}{4}e^{-s}.
\end{align*}
For the final term we use \eqref{eq:lagrangian_est_2},
\eqref{eq:psi_jb}, and take $\ep$ small to find
\begin{align*}
  \beta_\tau| \jb{\Phi^{y_0}(s)}^{-\sfrac{1}{3}}\COL{purple}{\circIV}|
  \leqslant
    2\ep^{\sfrac{1}{2}}\frac{e^{(3\alpha-\frac{3}{2})s}}{5}
    \norm{\partial_y^2W}_{L^\infty}^{1-2\alpha}\norm{\partial_y^3
    W}_{L^\infty}^{2\alpha}\jb{\Phi^{y_0}(s)}^{-\sfrac{2}{3}}
  \leqslant \frac{1}{4}e^{(3\alpha-\frac{5}{2})s}.
\end{align*}
Together these four estimates give
\begin{align}\label{eq:w0_forcing}
  |\beta_\tau\jb{y}^{-1/3}F_{\widetilde{W}}| \leqslant e^{(2\alpha-1)s}.
\end{align}

Since $\alpha < \sfrac{1}{3}$, we have temporal decay in our forcing
terms. Inserting the damping estimate \eqref{eq:w0_damping} and the
forcing estimate \eqref{eq:w0_forcing} into
\eqref{eq:general_v_solution} gives
\begin{align*}
  |V^{y_0}(s)|
  &\lesssim \lambda_D|V(y_0)| + \lambda_D
    \int_{s_*}^s\,e^{-(1-2\alpha)s'}\,ds'
  \\
  &\lesssim \lambda_D\left(|V(y_0)| +
    e^{(2\alpha-1)s_*}\right)
\end{align*}
As in the discussion in Appendix \ref{app:transport_estimates}, we
have two cases; either $s_*=s_0$, and $h\leqslant y_0 < \infty$,
or $s_*>s_0$ and $y_0=h$. In the first case we use our initial
data assumption \eqref{eq:w0_init_data} to find that
\begin{align*}
  |V^{y_0}(s)| \lesssim \lambda_D\left(\ep^{1-3\alpha}\jb{y_0}^{0} +
  \ep^{-(1-2\alpha)s_0}\right)
  \leqslant
  \ep^{\frac{1}{2}(1-2\alpha)}.
\end{align*}
In the second case we apply our bootstrap \eqref{eq:w0_boot_near} for
the region $0\leqslant |y| \leqslant h$ and estimate
\begin{align*}
  |V^{y_0}(s)| \lesssim
  \lambda_D\left(
  \left( \ep^{\frac{8}{9}(1-3\alpha)}
  + \log M \ep^{\frac{7}{9}(1-3\alpha)} \right)h^4
  + \ep^{-(1-2\alpha)s_*}\right).
\end{align*}
In both cases we may take $\ep$ sufficiently small to get
\begin{align*}
  |V^{y_0}(s)| \leqslant
  \frac{1}{2}\ep^{q}.
\end{align*}
This closes the zeroth derivative bootstrap \eqref{eq:w0_boot} over
the region $h \leqslant |y| < \infty$.

\paragraph*{(First Derivative on $h\leqslant |y| \leqslant e^{ms}$).}
We use the same strategy used to close the zeroth derivative
bootstrap. Making the weighted change of variables
$V := \jb{y}^{\sfrac{2}{3}}\partial_y\widetilde{W}$, a short
computation using \eqref{eq:diff_deriv} yields the following evolution
equation for $V$
\begin{align}\label{eq:v1_evolution}
  \partial_sV + \Big(\underbrace{1 +
  \beta_\tau(\partial_y\widetilde{W} + 2\Psi') -
  \frac{2}{3}y\jb{y}^{-2}g_W}_{\mathcal{D}(y)}\Big)V + g_W\partial_yV =
  -\beta_\tau\jb{y}^{\sfrac{2}{3}}F_{\partial_y\widetilde{W}}
\end{align}
where $F_{\widetilde{W}'}$ is the forcing term from
\eqref{eq:diff_deriv} which is given below in
\eqref{eq:w1_forcing}. Closing the bootstrap \eqref{eq:w1_boot_middle}
now amounts to showing that $|V(x,s)|< \varb$ uniformly in $s$ for all
$h\leqslant |x| \leqslant e^{\vare s}$.

We begin by bounding the damping below. Using
\eqref{eq:beta_tau_bound}, \eqref{eq:psi_jb}, the bootstraps
\eqref{eq:w0_boot} and \eqref{eq:w1_boot}, and
\eqref{eq:lagrangian_est_2} we find
\begin{align*}
  \mathcal{D}(y)
  &= \COL{red}{-1} - \COL{orange}{\beta_\tau}
    (\COL{blue}{\partial_y\widetilde{W}} +
    2\COL{purple}{\Psi'}) +
    \frac{2}{3}y\jb{y}^{-2}\left(\COL{red}{\frac{3}{2}y} +
    \COL{orange}{\beta_\tau}\left(\COL{teal}{W} +
    \COL{cyan}{e^{\sfrac{s}{2}}(\kappa-\dot{\xi})}\right)\right) \\
  &\geqslant \COL{red}{-\jb{y}^{-2}} -
    \COL{orange}{(1+2\ep^{\frac{1}{2}(1-3\alpha)})}
    \left(
    \COL{blue}{\varb\jb{y}^{-\sfrac{2}{3}}}
    + 2\COL{purple}{\jb{y}^{-\sfrac{2}{3}}} \right. \\
  &\qquad\qquad\qquad\qquad
    \left.+ \COL{teal}{\frac{2}{3}|y|\jb{y}^{-\sfrac{5}{3}}}
    +
    \COL{cyan}{\ep^{\frac{1}{2}(1-3\alpha)}\frac{2}{3}(1+)|y|\jb{y}^{-2}} 
    \right) \\
  &\geqslant -\jb{y}^{-2} - 5\jb{y}^{-\sfrac{2}{3}} \\
  &\geqslant -6\jb{y}^{-\sfrac{2}{3}}.
\end{align*}
Our estimate \eqref{eq:w0_damping} still applies to the current
damping lower bound for the first derivative. Therefore we have
\begin{align}\label{eq:w1_lambda}
  \lambda_{\mathcal{D}} :=
  e^{-\int_{s_0}^s\,\mathcal{D}(\Phi^{y_0}(s'))\,ds'} \leqslant
  e^{39\log(\frac{1}{h})} = h^{-39}.
\end{align}

The forcing for $V$ in this case is given by
\begin{align}\label{eq:w1_forcing}
  F_{\partial_y\widetilde{W}} :=
  \COL{red}{\underbrace{e^{(3\alpha-1)s}\flap \partial_yW}_{\circI}}
  + \COL{blue}{\underbrace{e^{s/2}(\kappa-\dot{\xi})\Psi''}_{\circII}}
  + \COL{orange}{\underbrace{\widetilde{W}\Psi''}_{\circIII}}
  + \COL{purple}{\underbrace{\dot{\tau}\Psi\Psi''}_{\circIV}}
  + \COL{teal}{\underbrace{(\Psi')^2}_{\circV}}
\end{align}
We bound the first term using \eqref{eq:beta_tau_bound}, the
interpolation estimate \eqref{eq:interpolation}, and by taking $\ep$
sufficiently small.
\begin{align*}
  \beta_\tau|\jb{y}^{\sfrac{2}{3}}\COL{red}{\circI}|
  &\lesssim e^{\frac{2}{3}\vare s}e^{(3\alpha-1)s}
    \norm{\partial_yW}_{L^\infty}^{1-2\alpha}
    \norm{\partial_y^2W}_{L^\infty}^{2\alpha} \\
  &\lesssim M^{\frac{1}{6}}e^{(\frac{2}{3}\vare + 3\alpha - 1)s}.
\end{align*}
We choose $m$ such that the exponent in the exponential above remains
positive for all $0 < \alpha < \sfrac{1}{3}$. Remember that we are
aiming to prove that $V$ is bounded by some multiple of
$\ep^{\sfrac{2}{3}(1-3\alpha)}$, which means we must have
$m< \frac{3}{8}(1-3\alpha)$. This informs the choice of $m$ in
\eqref{eq:bootstrap_constants}.

For the second term, we use identity \eqref{eq:xi_dot_closed},
\eqref{eq:psi_jb}, the interpolation estimate
\eqref{eq:interpolation}, the $L^\infty$ control
\eqref{eq:3rd_dv_linf},\eqref{eq:4th_dv_linf}, and then take $\ep$
small to find
\begin{align*}
  \beta_\tau|\jb{y}^{\sfrac{2}{3}}\COL{blue}{\circII}|
  &\leqslant \beta_\tau
    \frac{e^{(3\alpha-1)s}}{\partial_y^3W(0,s)}
    |\flap\partial_y^2 W(0,s)|\jb{y}^{-1} \\
  &\lesssim M^{\frac{1}{6}(1+\alpha)\vard}e^{(3\alpha - 1 - \frac{3}{2})s} .
\end{align*}

We apply the bootstrap \eqref{eq:w0_boot_near} for
$\widetilde{W}$, use \eqref{eq:psi_jb}, and estimate the trajectories
with \eqref{eq:sub_optimal_lower} to get
\begin{align*}
  \beta_\tau
  |\left(\jb{y}^{\sfrac{2}{3}}\COL{orange}{\circIII}\right)
  \circ \Phi^{y_0}(s)|
  &\leqslant \beta_\tau
    \ep^{\frac{1}{2}q}
    \jb{\Phi^{y_0}(s)}^{-\sfrac{2}{3}}
  \\
  &\lesssim
    \ep^{\frac{1}{2}q}e^{-s}.
\end{align*}
Next, \eqref{eq:psi_jb}, \eqref{eq:beta_tau_bound}, and taking $\ep$
small yields
\begin{align*}
  \beta_\tau|\left(\jb{y}^{\sfrac{2}{3}}\COL{purple}{\circIV}\right)
  \circ \Phi^{y_0}(s)|
  \leqslant \beta_\tau\dot{\tau}\jb{\Phi^{y_0}(s)}^{-\sfrac{4}{3}}
  \lesssim \ep^{\frac{8}{9}(1-3\alpha)}e^{-2s}.
\end{align*}
Finally, \eqref{eq:psi_jb}, \eqref{eq:beta_tau_bound}, and
\eqref{eq:mod_deriv_boot_assump} imply that
\begin{align*}
  \beta_\tau|\left(\jb{y}^{\sfrac{2}{3}}\COL{teal}{\circV}\right)
  \circ\Phi^{y_0}(s)| \leqslant
  \beta_\tau\jb{\Phi^{y_0}(s)}^{-\sfrac{2}{3}} \lesssim e^{-s}.
\end{align*}
To summarize, we have obtained the following forcing bound
\begin{align}\label{eq:w1_forcing_est}
  |F_{\partial_y\widetilde{W}}\circ\Phi^{y_0}(s)| \lesssim e^{-\ell s},
\end{align}
with $\ell$ as in \eqref{eq:bootstrap_constants}.

We once again apply the trajectory framework from Appendix
\ref{app:transport_estimates} to our equation \eqref{eq:v1_evolution}
for $V$. Plugging our damping estimate \eqref{eq:w1_lambda} and our
forcing estimate \eqref{eq:w1_forcing_est} into
\eqref{eq:general_v_solution} we obtain
\begin{align*}
  |V^{y_0}(s)|
  &\lesssim h^{-39}|V(y_0)| + \lambda_D\int_{s_*}^s\,e^{-\ell s'}\,ds' \\
  &\lesssim h^{-39}\left(|V(y_0)| + e^{-\ell s_*}\right)
\end{align*}
As in our closure of the zeroth derivative bootstrap and the
discussion of our transport framework from Appendix
\ref{app:transport_estimates}, we have two cases. In the first case,
$s_*=s_0$ and $h\leqslant y_0 \leqslant \ep^{-m}$ and we
use our initial data assumption \eqref{eq:w1_init_data} to find that
\begin{align*}
  |V^{y_0}(s)|\lesssim h^{-39}\left(\varf\jb{y_0}^{-\sfrac{2}{3}}+
  \ep^{\ell}\right) \lesssim M^{20}\left(\varf + \ep^{\ell}\right)
  \leqslant \frac{3}{4}\ep^{\frac{3}{4}\ell},
\end{align*}
upon choosing $\ep$ sufficiently small.

In the second case we have that $s_*>s_0=-\log\ep$, and
\eqref{eq:w1_boot} means $V^{y_0}(s_*)=V(h)$. This gives
\begin{align*}
  |V^{y_0}(s)|\lesssim
  M^{20}\left(\vara h^2\jb{h}^{-2}+\ep^{\ell}\right) \leqslant
  \frac{3}{4}\ep^{\frac{3}{4}\ell}
\end{align*}
(since $\ep^{\frac{3}{4}\ell} < \vara$ for all $\alpha$). Thus we
have closed the first derivative bootstrap \eqref{eq:w1_boot} on the
region $h\leqslant |y| \leqslant e^{m s}$.

\paragraph*{(First Derivative on $e^{m s}\leqslant |x|<\infty$).}
We prove a short fact about the temporal decay of the fractional
Laplacian along trajectories.

\begin{lemma}\label{lem:forcing_decay_far}
  Suppose $e^{ms} \leqslant |y| < \infty$. Then
  \begin{align*}
    \flap[ \partial_yW](\Phi^{y}(s)) \lesssim
    e^{-s}
  \end{align*}
  for all $s$ sufficiently large.
\end{lemma}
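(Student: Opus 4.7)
The crux is that the hypothesis $|y|\geq e^{ms}$ combined with the optimal lower bound \eqref{eq:optimal_lower} gives $|z|\gtrsim e^{3s/2}$ for $z:=\Phi^y(s)$ and $s$ large, placing the evaluation point far beyond the nominal far-field threshold $e^{ms}$. This extra spatial decay is what ultimately produces the temporal $e^{-s}$ rate. I would start by splitting $\partial_y W = \Psi' + \partial_y\widetilde W$; the $\Psi'$ piece is handled directly, since the explicit decay $|\Psi'(y)|\lesssim \jb{y}^{-2/3}$ from \eqref{eq:psi_jb} together with the scaling of $\flap$ yields $|\flap\Psi'(z)|\lesssim |z|^{-(2/3+2\alpha)}\lesssim e^{-(1+3\alpha)s}\leq e^{-s}$.

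For the perturbation $\flap\partial_y\widetilde W(z)$, write out the singular-integral representation and decompose the $\eta$-integration into three regions with an auxiliary scale $\delta$: a local ball $|\eta-z|<\delta$; an intermediate annulus $\delta\leq |\eta-z|\leq |z|/2$, entirely contained in the far field because $|z|\gtrsim e^{3s/2}\gg e^{ms}$; and the exterior $|\eta-z|>|z|/2$. On the local ball, use the symmetric second-difference form of $\flap$ and Taylor-expand to third order to exploit the principal-value cancellation, producing a bound $\lesssim \|\partial_y^3 W\|_{L^\infty}\delta^{2-2\alpha}\lesssim M^{1/3}\delta^{2-2\alpha}$ via \eqref{eq:3rd_dv_linf}. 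On the annulus, the far-field bootstrap \eqref{eq:w1_boot_far} gives $|\partial_y\widetilde W(\eta)|,|\partial_y\widetilde W(z)|\lesssim e^{-s}$, and integrating against $|\eta-z|^{-(2\alpha+1)}$ produces a contribution $\lesssim e^{-s}\delta^{-2\alpha}$. On the exterior the kernel is bounded by $|z|^{-(2\alpha+1)}$; integrating $|\partial_y\widetilde W|$ with the middle-region bootstrap \eqref{eq:w1_boot_middle} and the Taylor-region bound \eqref{eq:w1_boot_near} picks up only a factor $\lesssim |z|^{1/3}$, giving an exterior contribution $\lesssim |z|^{-(2\alpha+2/3)}\lesssim e^{-(1+3\alpha)s}$. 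Optimizing $\delta$ to balance the first two contributions, and sacrificing $M$-powers against $\ep$ as elsewhere in the paper, then closes the estimate.

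The main obstacle is the local ball near $z$: one has no pointwise smallness for $\partial_y^3 W$ in the far field and must rely on the global interpolation bound $M^{1/3}$ from \eqref{eq:3rd_dv_linf}. What makes the argument go through is the upgraded spatial scale $|z|\gtrsim e^{3s/2}$ from Lemma \ref{lem:optimal_lemma}, which is strictly stronger than the naive $|z|\geq e^{ms}$; without it, both the $\Psi'$ piece and the exterior-kernel piece would fail to beat $e^{-s}$, leaving no slack with which to absorb the $M^{1/3}\delta^{2-2\alpha}$ Taylor error. The weaker lower bound \eqref{eq:sub_optimal_lower} with rate $e^{s/5}$ is, accordingly, insufficient here, mirroring the situation already encountered in closing the zeroth-derivative bootstrap.
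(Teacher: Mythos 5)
Your splitting off of $\Psi'$ (handled by \eqref{eq:psi_flap_control} and the trajectory lower bound \eqref{eq:optimal_lower}) is fine, and your annulus and exterior estimates are essentially sound (modulo the minor point that the exterior region $|\eta-z|>|z|/2$ also contains far-field $\eta$, where you should use \eqref{eq:w1_boot_far} and the kernel decay at infinity). The genuine gap is in the final step: the optimization over $\delta$ does not close the estimate. Your two competing bounds are $M^{1/3}\delta^{2-2\alpha}$ for the local ball and $e^{-s}\delta^{-2\alpha}$ for the annulus. Making the annulus term $\lesssim e^{-s}$ forces $\delta\gtrsim 1$, while making the local term $\lesssim e^{-s}$ forces $\delta\lesssim M^{-\frac{1}{3(2-2\alpha)}}e^{-\frac{s}{2-2\alpha}}\ll 1$; these are incompatible. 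Balancing at $\delta\sim M^{-1/6}e^{-s/2}$ yields $M^{\alpha/3}e^{-(1-\alpha)s}$, not $e^{-s}$, and the deficit is a \emph{growing} factor $e^{\alpha s}$, which cannot be absorbed by sacrificing powers of $M$ against $\ep$ --- that device only removes $s$-independent constants. So your argument, as written, proves at best $\flap[\partial_yW](\Phi^{y}(s))\lesssim M^{\alpha/3}e^{-(1-\alpha)s}$, strictly weaker than the lemma. This matters downstream: in the closure of \eqref{eq:w1_boot_far} the forcing is $e^{3\alpha s}\flap\partial_yW$, and with your rate it decays only like $e^{(4\alpha-1)s}$, which fails to be integrable for $\alpha\geqslant\sfrac{1}{4}$, i.e. on part of the range $0<\alpha<\sfrac{1}{3}$ the paper must cover.

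For comparison, the paper does not introduce a local ball around the evaluation point at all: it splits the $\eta$-integration according to the bootstrap regions $[0,h]$, $[h,e^{ms}]$, $[e^{ms},\infty)$ (isolating the $\Psi'$ contribution on $[0,e^{ms}]$), bounds the numerators by \eqref{eq:w1_boot_near}, \eqref{eq:w1_boot_middle}, \eqref{eq:w1_boot_far}, and converts the kernel factors into temporal decay via $|\Phi^{y}(s)|\gtrsim e^{\frac{3}{2}s}$; the singularity at $\eta=y$ is dismissed only with the remark that the principal value converges since $\partial_yW,\partial_y^2W\in L^\infty$. Your finer decomposition actually exposes that this local contribution is the quantitative bottleneck: with only the global bounds $\norm{\partial_y^2W}_{L^\infty}\lesssim M^{1/6}$, $\norm{\partial_y^3W}_{L^\infty}\lesssim M^{1/3}$ and the far-field smallness $|\partial_y\widetilde{W}|\lesssim e^{-s}$, any interpolation-type bound of the form $\norm{f}_{L^\infty}^{1-\alpha}\norm{f''}_{L^\infty}^{\alpha}$ (or $\norm{f}_{L^\infty}^{1-2\alpha}\norm{f'}_{L^\infty}^{2\alpha}$ as in \eqref{eq:interpolation}) is intrinsically capped at $e^{-(1-\alpha)s}$ (respectively $e^{-(1-2\alpha)s}$). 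To reach the stated $e^{-s}$ rate you need some additional smallness or spatial decay of $\partial_y^2W$ or $\partial_y^3W$ near the far-field point $\Phi^{y}(s)$, which the bootstraps do not provide; the $\delta$-optimization alone cannot deliver it.
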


\begin{proof}
  We use the singular integral representation \eqref{eq:singular_int}
  for $\flap$. Let
  $\flap_{[a,b]} u := C_\alpha\int_{a\leqslant |\eta|\leqslant
    b}\,\cdots\,d\eta$ and decompose the fractional Laplacian as
  follows
  \begin{align*}
    \flap \partial_y W =
    \COL{red}{\underbrace{
    \flap_{[0,h]}\partial_y\widetilde{W}
    }_{\circI}} +
    \COL{blue}{\underbrace{
    \flap_{[h,e^{ms}]}\partial_y\widetilde{W}
    }_{\circII}} +
    \COL{orange}{\underbrace{
    \flap_{[e^{ms},\infty)}\partial_yW
    }_{\circIII}} +
    \COL{purple}{\underbrace{
    \flap_{[0,e^{ms}]}\Psi'
    }_{\circIV}}.
  \end{align*}
  The term $\COL{purple}{\circIV}$ is majorized by
  \eqref{eq:psi_flap_control}, that is
  $|\circIV| \leqslant \jb{y}^{-\sfrac{2}{3}-2\alpha}$. Composing with
  the lower bound \eqref{eq:optimal_lower} on the trajectories shows
  that $|\COL{purple}{\circIV}|\lesssim e^{-(1+3\alpha)s}$.

  We now go term by term and bound using our bootstraps
  \eqref{eq:w1_boot}. Using \eqref{eq:w1_boot_near} for the first term
  we have 
  \begin{align*}
    |\COL{red}{\circI}|
    &\leqslant C_\alpha\int_{0\leqslant|\eta|\leqslant h}
      \frac{e^{-s} + C}{|y-\eta|^{1+2\alpha}}\,d\eta \\
    &\leqslant 2hC_\alpha(e^{-s}+C)|y-h|^{-1-2\alpha},
  \end{align*}
  where $C$ is the constant from \eqref{eq:w1_boot_near}.

  Trajectories will either satisfy $\Phi^{y_0}(s)>e^{ms}$ for
  $|y_0|\geqslant e^{ms_0}$, or $s_*$ is the first time that a
  trajectory enters $e^{ms}\leqslant y < \infty$, in which case
  $|y_*|=e^{ms_*}$. In both cases since $e^{ms_0}>1$ we can apply
  our trajectory lower bound \eqref{eq:optimal_lower}. Since
  $y=\Phi^{y_0}(s) \geqslant |y_0| > e^{ms} > 1 > h$ we have that
  $|y-h|>|y|$ and composing with trajectories gives
  $|y-h| \gtrsim e^{\frac{3}{2}s}$. Therefore 
  \begin{align*}
    |\COL{red}{\circI}| \lesssim
    e^{-\frac{3}{2}(1+2\alpha)s}.
  \end{align*}
  We remark that this estimate required a lower bound on the
  trajectories of at least $e^{\frac{3}{5}s}$.
 
  The integrand becomes singular at $y=e^{ms}$. Since $\partial_yW$
  and $\partial_y^2 W$ are both $L^\infty$, the interpolation
  inequality \eqref{eq:interpolation} guarantees that the integral
  converges. Because we only care about the asymptotic behavior of
  these quantities, and since we will be composing with trajectories,
  we can safely assume that $|x|\geqslant e^{ms}+1$. The same argument
  as before applies and trajectories will still take off like
  $e^{\frac{3}{2}s}$.

  To this end we estimate
  \begin{align*}
    |\COL{blue}{\circII}|
    &\leqslant C_\alpha\int_{h\leqslant|\eta|\leqslant e^{ms}}
      \frac{e^{-s} + \varb \jb{y}^{-\sfrac{2}{3}}}
      {|y-\eta|^{1+2\alpha}}\,d\eta  \\
    &\leqslant 2C_\alpha (e^{-s} +
      \varb\jb{h}^{-\sfrac{2}{3}})(e^{ms}-h)
      |y-e^{ms}|^{1+2\alpha}, \\
    |\COL{orange}{\circIII}|
    &\leqslant 2C_\alpha\int_{e^{ms}\leqslant|\eta|<\infty}
      \frac{e^{-s}}
      {|y-\eta|^{1+2\alpha}}\,d\eta \\
    &= \frac{2C_\alpha}{\alpha}e^{-s}|y-e^{ms}|^{-2\alpha}
  \end{align*}
  since $|y-e^{ms}| > 1$ our lower bound \eqref{eq:optimal_lower}
  applies (with $y_0=\Phi(s_*)-e^{ms_*}$) and
  \begin{align*}
    |\COL{blue}{\circII}^{y_0}(s)| \lesssim
    e^{ms}e^{-\frac{3}{2}(1+2\alpha)s}, \qquad
    |\COL{orange}{\circIII}^{y_0}(s)| \lesssim
    e^{-(1+3\alpha)s}
  \end{align*}
  for $s$ sufficiently large. Collecting the estimates on
  $\COL{red}{\circI},\COL{blue}{\circII}$, and
  $\COL{orange}{\circIII}$ proves the lemma.
\end{proof}

We proceed in the same manner as we did when closing the bootstraps
for $\widetilde{W}$ on $h\leqslant|y|<\infty$ and
$\partial_y\widetilde{W}$ on $h\leqslant |y|\leqslant e^{ms}$. Making
the weighted change of variables
$V:=e^{s}\partial_y W$, a short computation using
\eqref{eq:stable_1st} shows
that $V$ satisfies the equation
\begin{align*}
  (\partial_s + \beta_\tau \partial_y W)V + g_W\partial_yV =
  -\beta_\tau e^{3\alpha s}\flap \partial_y W.
\end{align*}
Closing the bootstrap \eqref{eq:w1_boot_far} now amounts to showing
that $|V|<2$ for all $e^{ms}\leqslant |y|$. We use
\eqref{eq:beta_tau_bound} to bound the damping below
\begin{align*}
  \mathcal{D}(y) = \beta_\tau \partial_y W \geqslant -
  \frac{3}{2}e^{-s}
\end{align*}
In particular the damping is integrable and we obtain
\begin{align*}
  \lambda_{\mathcal{D}} := e^{-\int_{s_*}^s\,\mathcal{D}^{y_0}\,ds'} 
  \leqslant \ep^{\frac{3}{2}}
\end{align*}

Lemma \ref{lem:forcing_decay_far} shows that the
forcing is bounded and we applying our trajectory
framework from Appendix \ref{app:transport_estimates} to obtain
\begin{align*}
  |V^{y_0}(s)|\lesssim \lambda_{\mathcal{D}}|V(y_0)| +
  \lambda_{\mathcal{D}}\int_{s_*}^{s}\,e^{(3\alpha-1)s'}\,ds'.
\end{align*}
We once again have either $s_*=s_0$ or $s_* > s_0$. In the first case
our initial data assumption \eqref{eq:w1_init_far} gives
\begin{align*}
  |V^{y_0}(s)|\lesssim \lambda_{\mathcal{D}}\left(e^{s_0}\ep^2 \varc +
  e^{(3\alpha-1)s_*}\right) \leqslant 1,
\end{align*}
upon $\ep$ sufficiently small. In the
second case our bootstrap on $h\leqslant |y|\leqslant e^{ms}$
\eqref{eq:w1_boot_middle} implies that
\begin{align*}
  |V^{y_0}(s)|\lesssim \lambda_{\mathcal{D}}\left(\jb{e^{ms_*}}^{-2/3}
  + \varb\jb{e^{ms}}^{-2/3} + e^{(3\alpha-1)s_*}\right) \leqslant 1,
\end{align*}
since $e^{-ms_*} \leqslant e^{-ms_0}=\ep^{m}$. The inequality holds
upon choosing $\ep$ sufficiently small. This closes the bootstrap
\eqref{eq:w1_boot_far}.

\section{Proof of Theorem}\label{sec:proof}
We are now ready to prove Theorem \ref{thm:stable_main_thm} and the
Corollary \ref{cor:open_set}. We prove uniform H\"older bounds on our
solution up to the shock time in subsection \ref{sec:holder_bounds},
and asymptotic convergence to a stable Burgers profile $\Psi_\nu$ in
subsection \ref{sec:asymptotic_convergence}. We prove Theorem
\ref{thm:stable_main_thm} in subsection \ref{sec:proof_of_stable_main}
and we conclude with subsection \ref{sec:failure_of_mod} discussing
why our method cannot be extended beyond the range
$0<\alpha<\sfrac{1}{3}$.

\subsection{H\"older Bounds}\label{sec:holder_bounds}
First we note that for all $0\leqslant |y|\leqslant h < \sfrac{1}{2}$,
the bootstrap \eqref{eq:w1_boot_near} gives
\begin{align*}
  |\widetilde{W}| \leqslant \left( \ep^{\frac{8}{9}(1-3\alpha)} + \log
  M \ep^{\frac{7}{9}(1-3\alpha)} \right) h^3\int_0^{|y|}\, 1\,dy =
   C h^4|y| \leqslant C h^4|y|^{1/3}.
\end{align*}
Combining this estimate with the bootstrap \eqref{eq:w0_boot_far} we
get
\begin{align*}
  |\widetilde{W}| &\leqslant
  \begin{cases}
    C h^3|y|^{1/3},&\qquad 0 \leqslant |y|\leqslant h \\
    \ep^{q}\jb{y}^{\sfrac{1}{3}}, &\qquad h \leqslant |y|<\infty
  \end{cases} \\
    &\leqslant |y|^{1/3}.
\end{align*}
We then note that the bound \eqref{eq:psi_0_13} for $\Psi$ implies
that
\begin{align}\label{eq:w_algebraic_bound}
  |W| \leqslant |\widetilde{W}| + |\Psi| \leqslant 2|y|^{1/3}.
\end{align}

From the transformation \eqref{eq:self_similar_w_ansatz} we obtain the
following equality for the H\"older seminorms of $u$ and $W$
\begin{align*}
  [u]_{C^{1/3}}=\sup_{\stackrel{x,z\in\R}{x\ne z}}\frac{|u(x,t)-u(z,t)|}{|x-z|^{1/3}}
  &=
    \sup_{\stackrel{x,z\in\R}{x\ne z}}\frac{e^{-\sfrac{s}{2}}|W(xe^{\sfrac{3s}{2}},s)-W(ze^{\sfrac{3s}{2}},s)|}{(e^{-\sfrac{3s}{2}}|xe^{\sfrac{3s}{2}}-ze^{\sfrac{3s}{2}}|)^{1/3}}
  \\
  &=\sup_{\stackrel{x',z'\in\R}{x'\ne z'}}
    \frac{|W(x',s)-W(z',s)|}{|x'-z'|^{1/3}} = [W]_{C^{1/3}}.
\end{align*}
Furthermore, \eqref{eq:w_algebraic_bound} implies that
$[W]_{C^{1/3}}\leqslant 2$ uniformly in both $x$ and $s$,
and since $u\in L^\infty$ we have that
\begin{align}\label{eq:u_holder}
  \norm{u}_{C^{1/3}} = \norm{u}_{L^\infty} + [u]_{C^{1/3}} \leqslant M
  + 2.
\end{align}
Therefore $u$ is H\"older $1/3$ uniformly in $x$ and $t$. We also
point out that by a similar argument, any H\"older norm larger than
$\sfrac{1}{3}$ cannot be uniformly controlled in time and blows up
when the singularity forms. Indeed, letting $\beta > \sfrac{1}{3}$, a
simple calculation shows that
\begin{align*}
[u]_{C^{\beta}}= e^{\frac{s}{2}(3\beta - 1)}[W]_{C^{\beta}}.
\end{align*}
Hence $u$ is not $C^\beta$ at $T_*$ for any $\beta>\frac{1}{3}$.

\subsection{Asymptotic Convergence to
  Stationary Solution}\label{sec:asymptotic_convergence}
The key observation underpinning our analysis is that whenever
$\flap W$ is bounded the self-similar equation
\eqref{eq:modulated_pde} governing the evolution of $W$ formally
converges to the self-similar Burgers equation
\eqref{eq:burgers_similarity_equation}. This section justifies this
limit rigorously.

We follow the proof outlined by Yang in \cite{yang_2020}, which is
based on the proof in \cite{buckmaster_2019_1}.

\subsubsection{Taylor Expansion}
Set $\nu=\lim_{s\rightarrow \infty}\partial_y^3W(0,s)$; this limit
exists by the fundamental theorem of calculus and the estimate
\eqref{eq:w3_time_decay}. By \eqref{eq:w3_boot}, we know that
$5 \leqslant |\nu| \leqslant 7$. Since $\Psi_\nu$ satisfies the same
constraints at the origin as $\Psi$, namely \eqref{eq:w_constraints}
(c.f. Appendix \ref{app:burgers}), we automatically obtain
$W(0,s)=\Psi_\nu(0)$, $\partial_yW(0,s)=\Psi_\nu'(0)$, and
$\partial_y^2W(0,s)=\Psi_\nu''(0)$ for all $s\geqslant s_0$.

The difference $\widetilde{W}_\nu:=W-\Psi_\nu$ ($\Psi_\nu$ defined in
\eqref{eq:psi_nu}) satisfies the evolution equation
\begin{align}\label{eq:diff_asymp}
  \left(\partial_s - \frac{1}{2} + \Psi_\nu'\right)\widetilde{W}_\nu +
  \left(\frac{3}{2}y + W\right)\partial_y\widetilde{W}_\nu =
  -\beta_\tau F_{\widetilde{W}_\nu},
\end{align}
where the forcing $F_{\widetilde{W}_\nu}$ is given below by
\eqref{eq:asymp_forcing}. We will now show that
$W(y,s)\rightarrow \Psi_\nu(y)$ for all $y\in \R$ as
$s\rightarrow \infty$, that is
\begin{align}\label{eq:convergence_sup}
  \limsup_{s\rightarrow\infty}|\widetilde{W}_\nu(y,s)|=0.
\end{align}
The convergence \eqref{eq:convergence_sup} is trivial when $y_0=0$
since $\Psi_{\nu}(0)=\Psi(0)$ and $W(0,s)=0$ by our constraint
\eqref{eq:w_constraints}.

We consider the Taylor expansion of $\widetilde{W}_\nu$, observing
that this Taylor expansion vanishes to third order as a consequence of
our constraints \eqref{eq:w_constraints}:
\begin{align*}
  \widetilde{W}_\nu(y,s) &= \widetilde{W}_\nu(0,s)
  + y\partial_y\widetilde{W}_\nu(0,s) 
  + \frac{y^2}{2}\partial_y^2\widetilde{W}_\nu(0,s)
  + \frac{y^3}{6}\partial_y^3\widetilde{W}_\nu(0,s)
  + \frac{y^4}{24}\partial_y^4\widetilde{W}_\nu(\eta,s) \\
  &=\frac{y^3}{6}\partial_y^3\widetilde{W}_\nu(0,s)
  + \frac{y^4}{24}\partial_y^4\widetilde{W}_\nu(\eta,s), \qquad
    0<|\eta|<\infty.
\end{align*}
Differentiating \eqref{eq:psi_nu} four times,
applying \eqref{eq:4th_dv_linf}, and taking $\ep$ small gives the following
estimate
\begin{align*}
  \norm{\partial_y^4 \widetilde{W}_\nu}_{L^\infty_{y,s}}
  \leqslant \norm{\Psi_\nu^{(4)}}_{L^\infty} +
  \norm{\partial_y^4W}_{L^\infty_{y,s}} \\
  \leqslant 30\left(\frac{7}{6}\right)^{\frac{3}{2}} +
  M^{\frac{6}{7}\vard} \leqslant 2M^{\frac{6}{7}\vard}.
\end{align*}
We take the absolute value of the Taylor expansion and apply
\eqref{eq:w3_time_decay} to find that
\begin{align}\label{eq:convergergence_taylor}
  |\widetilde{W}_\nu(y,s)| \leqslant
  \frac{1}{6}|y|^3e^{-\frac{1}{4}(3\alpha-1)s} +
  \frac{M^{\frac{1}{2}\vard}}{12}|y|^4
\end{align}
holds for all $y\in \R$.

Now fix any $y_0\in \R$, with $|y_0|>0$, and choose $0<\lambda<1$ such
that $\lambda\leqslant |y_0|\leqslant \lambda^{-1/6}$. From the Taylor
expansion \eqref{eq:convergergence_taylor}, we may choose
\begin{align*}
  s_*=\max\left\{
  \log\left(\left(\frac{6\delta}{|\lambda|^3}
  \right)^{-\frac{4}{3\alpha-1}}\right),s_0\right\}.
\end{align*}
If we then choose some $\delta$ such that $\lambda^4 > \delta > 0$,
then our Taylor expansion \eqref{eq:convergergence_taylor}
collapses to
\begin{align}\label{eq:wnu_taylor}
  |\widetilde{W}_\nu(y,s)| \leqslant
  \delta + \frac{M^{\frac{1}{2}}}{12}|y|^4.
\end{align}
This choice for $\delta$ will become clear below.

\subsubsection{Lagrangian Trajectories}
Consider the Lagrangian flow associated with
\eqref{eq:diff_asymp}, given by
\begin{align}\label{eq:convergence_lagrangian}
  \frac{d}{ds}\Phi^{y_0}
  &= \frac{3}{2}\Phi^{y_0} + W^{y_0}(s).
\end{align}
From the Taylor expansion of $W$ about $y=0$, the mean
value theorem together with the uniform $L^\infty$ bound
\eqref{eq:w1_unif_linf} shows
\begin{align*}
  |W(y,s)|\leqslant |y| \,\text{ for all } \,y\in \R.
\end{align*}
Therefore $\frac{d}{ds}|\Phi^{y_0}|^2\geqslant |\Phi^{y_0}|^2$, which
upon integration yields the lower bound
\begin{align*}
  |\Phi^{y_0}(s)| \geqslant |y_0|e^{\frac{1}{2}(s-s_0)}.
\end{align*}
The same upper bound as before, \eqref{eq:lagrangian_upper}, still
applies to the current trajectories. Thus we have established that for
all $0<y_0<\infty$
\begin{align}\label{eq:asymp_traj_bounds}
  |y_0|e^{\frac{1}{2}(s-s_0)} \leqslant \Phi^{y_0}(s) \leqslant
  \left(|y_0|+\frac{3}{2}C\ep^{-\sfrac{1}{2}}\right)e^{\frac{3}{2}(s-s_0)}.
\end{align}
Note that our new bounds are independent of any fixed parameter $h$,
and indeed hold for any $y_0>0$. This is in contrast to our previous
estimates, where some trajectories near the origin will not take off
at all!

\subsubsection{Forcing Terms}
We prove two technical lemmas to deal with the fractional Laplacian in
the range $\sfrac{1}{4}\leqslant \alpha < \sfrac{1}{3}$.

\begin{lemma}\label{prop:flap_0}
  For $\alpha > 1/6$, we have
  \begin{align}\label{eq:flapW_0}
    | (\flap W)^0 | \lesssim \frac{1}{h^{2\alpha}}.
  \end{align}
\end{lemma}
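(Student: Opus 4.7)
The plan is to use the singular-integral representation \eqref{eq:singular_int} of $\flap$, which gives
\[
(\flap W)^0 = -C_\alpha\,\text{p.v.}\int_\R \frac{W(\eta)}{|\eta|^{1+2\alpha}}\,d\eta,
\]
and then to exploit the constraints \eqref{eq:w_constraints} to kill off the singular odd behaviour of the integrand near the origin. Concretely, I would split the integral into a near piece over $|\eta|\leqslant h$ and a far piece over $|\eta|\geqslant h$, and treat them by two qualitatively different mechanisms: Taylor cancellation in the first, and the pointwise bound \eqref{eq:w_algebraic_bound} in the second.

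For the near piece, I would symmetrise: since $|\eta|^{-1-2\alpha}$ is even, the principal value picks up only the even part $W_{\mathrm{even}}(\eta):=\tfrac{1}{2}(W(\eta)+W(-\eta))$. Taylor expanding $W$ to second order at the origin with a third-derivative integral remainder, and using $W(0)=0$ and $\partial_y^2 W(0)=0$ from \eqref{eq:w_constraints}, gives $W(\eta)=-\eta + R(\eta)$ with $|R(\eta)|\leqslant \tfrac{1}{6}\norm{\partial_y^3 W}_{L^\infty([-h,h])}|\eta|^3$. The odd linear term $-\eta$ drops out of $W_{\mathrm{even}}$, leaving $|W_{\mathrm{even}}(\eta)|\leqslant \tfrac{1}{6}\norm{\partial_y^3 W}_{L^\infty([-h,h])}|\eta|^3$. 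On the Taylor region that $L^\infty$ norm is controlled by a constant depending only on $\alpha$, since $\partial_y^3\Psi$ is smooth and the bootstrap \eqref{eq:w3_boot_near} forces $\partial_y^3\widetilde{W}=o(1)$ once $\ep$ is small enough in terms of $M$. Hence
\[
\Big|\text{p.v.}\!\!\int_{|\eta|\leqslant h}\!\frac{W(\eta)}{|\eta|^{1+2\alpha}}\,d\eta\Big| \lesssim \int_0^h \eta^{2-2\alpha}\,d\eta \lesssim h^{3-2\alpha} \leqslant h^{-2\alpha}.
\]

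For the far piece, I would invoke the uniform-in-$s$ algebraic bound $|W(\eta)|\leqslant 2|\eta|^{1/3}$ from \eqref{eq:w_algebraic_bound} to get
\[
\int_{|\eta|\geqslant h}\frac{|W(\eta)|}{|\eta|^{1+2\alpha}}\,d\eta \lesssim \int_h^\infty \eta^{-2/3-2\alpha}\,d\eta.
\]
This is where the hypothesis $\alpha>1/6$ enters: the integrand is integrable at infinity precisely when $2\alpha>1/3$, and evaluating produces $(2\alpha-1/3)^{-1}h^{1/3-2\alpha}\lesssim h^{-2\alpha}$. Combining the near and far estimates yields the lemma.

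The main obstacle is keeping constants uniform in $M$. The naive move would be to Taylor-expand $W_{\mathrm{even}}$ to fourth order to get the sharper bound $|W_{\mathrm{even}}(\eta)|\leqslant C|\eta|^4$, but this brings in $\norm{\partial_y^4 W}_{L^\infty}\lesssim M^{1/2}$ via \eqref{eq:4th_dv_linf}, which would corrupt the paper's convention for $\lesssim$. The key point is that the cheaper $O(|\eta|^3)$ bound, which uses only the $M$-independent $L^\infty$ control of $\partial_y^3 W$ on the small interval $[-h,h]$, already produces a contribution strictly smaller than the target $h^{-2\alpha}$, so one simply does not need to go to higher order.
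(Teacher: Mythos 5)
Your argument is correct, and it closes the lemma, but it treats the near-origin contribution by a genuinely different mechanism than the paper. The paper splits $W$ with a smooth cutoff $\chi$ at scale $h$ and bounds the near piece $(\flap(\chi W))^0$ by the $L^\infty$ interpolation Lemma \ref{prop:bound_flap}, so the $h^{-2\alpha}$ in \eqref{eq:flapW_0} is produced by the cutoff derivative $|\chi'|\leqslant 4/h$; the far piece $(\flap((1-\chi)W))^0$ is then the same integral $\int_{h/2}^\infty \eta^{-2/3-2\alpha}\,d\eta$ you compute, with $\alpha>1/6$ entering identically. You instead work directly with the singular integral at $y=0$, use parity to discard the odd part (in particular the $-\eta$ term coming from $\partial_yW(0,s)=-1$), and exploit the constraints \eqref{eq:w_constraints} so that only a cubic Taylor remainder survives near the origin; your observation that this remainder is controlled $M$-independently via $\partial_y^3\Psi$ plus the Taylor-region bootstrap \eqref{eq:w3_boot_near} (rather than the $M$-dependent global bounds \eqref{eq:w3_global} or \eqref{eq:4th_dv_linf}) is exactly the point that keeps the implicit constant admissible under the paper's convention for $\lesssim$. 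The payoff is a sharper estimate: your near piece is $O(h^{3-2\alpha})$, so your proof actually yields $|(\flap W)^0|\lesssim h^{1/3-2\alpha}$, stronger than \eqref{eq:flapW_0}; the paper's route is shorter because it recycles Lemma \ref{prop:bound_flap} and needs no cancellation structure at the origin, but it only gives $h^{-2\alpha}$. Both proofs use the hypothesis $\alpha>1/6$ in the same place, namely integrability of $\eta^{-2/3-2\alpha}$ at infinity after invoking the $|W|\lesssim|\eta|^{1/3}$ bound \eqref{eq:w_algebraic_bound}.
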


\begin{proof}
  Let $\chi$ be a smooth cutoff function such that $\chi \equiv 1$ on
  $ 0 < |y| < h/2$, $\chi \equiv 0$ on $h < |y|$, and which smoothly
  interpolates between $1$ and $0$ on $h/2 < |y| < h$ with
  $|\chi'| \leq 4/h$. We decompose
  \begin{align*}
    (\flap W)^0 = (\flap (\chi W))^0 + (\flap ((1-\chi)W))^0
  \end{align*}
  To estimate the first term, we use our interpolation
  \eqref{eq:interpolation} on $\flap$, the bootstraps
  \eqref{eq:w0_boot_near} and \eqref{eq:w1_boot_near}, and
  $|\chi'|\leqslant \sfrac{4}{h}$ to get
  \begin{align*}
    \|\flap (\chi W)^0\|_{L^\infty}
    \lesssim \|\chi W^0\|_{L^\infty}^{1-2\alpha}
    \|(\chi W)'\|_{L^\infty}^{2\alpha}
    \lesssim \frac{1}{h^{2\alpha}}.
  \end{align*}
  To bound the second term, we use \eqref{eq:singular_int} and the
  fact that $\sfrac{2}{3} + 2\alpha > 1$ to obtain
  \begin{align*}
    |\flap [(1-\chi)W]^0|
    \leqslant 2C_\alpha
    \int_{h/2}^\infty\, (1-\chi(\eta))
    \frac{|W(\eta)|}{\eta^{1+2\alpha}}\,dy
    \lesssim \int_{h/2}^\infty\,\eta^{-\frac{2}{3} - 2\alpha}\,dy
    \lesssim h^{\frac{1}{3} - 2\alpha}.
  \end{align*}
  Since $h<1$ the result follows.
\end{proof}

\begin{lemma}\label{prop:flapW_decay}
  For $\alpha \in (0,\sfrac{1}{3})$, we have
  \begin{align}\label{eq:flapW_decay}
    \norm{\flap W}_{L^\infty}
    \lesssim 
    \begin{cases} 
    1 + M e^{(\frac{1}{2}-3\alpha)s} & \alpha \neq \sfrac{1}{6}\\
    M + s & \alpha = \sfrac{1}{6}
    \end{cases}
  \end{align}
\end{lemma}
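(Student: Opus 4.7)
The plan is to estimate $\flap W$ pointwise via the singular integral representation \eqref{eq:singular_int}, following the strategy of Lemma \ref{prop:flap_0} with a scale-dependent cutoff. The key observation is that three complementary bounds on $W$ are available: the uniform derivative bound $\norm{\partial_y W}_{L^\infty} \leq 2$ from \eqref{eq:w1_unif_linf}, the algebraic bound $|W(y)| \leq 2|y|^{1/3}$ from \eqref{eq:w_algebraic_bound}, and the global bound $\norm{W}_{L^\infty} \leq 2 M e^{s/2}$ that follows from \eqref{eq:lagrangian_est_1} and \eqref{eq:kappa_bound}. The algebraic and global bounds meet at the scale $L := M^3 e^{3s/2}$, which is the critical threshold for the argument. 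Fix $y_0 \in \R$ and introduce a smooth cutoff $\chi$ equal to $1$ on $|y| \leq L$ and supported in $|y| \leq 2L$, so that $W = \chi W + (1-\chi)W$.

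The outer piece $(1-\chi)W$ is bounded by $2Me^{s/2}$ and supported on $|y| \geq L$; for $|y_0| < L/2$ a direct estimate gives
\[
|\flap((1-\chi)W)(y_0)| \lesssim Me^{s/2}\int_{|\eta|>L}\frac{d\eta}{|y_0-\eta|^{1+2\alpha}}\lesssim Me^{s/2} L^{-2\alpha} = M^{1-6\alpha}e^{(1/2-3\alpha)s},
\]
which matches the target after noting $M^{1-6\alpha} \leq M$. The case $|y_0| \geq L/2$ is handled symmetrically by interchanging the roles of the inner and outer regions in the integral.

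The main work is controlling $\flap(\chi W)$, for which I would use the symmetric form of the fractional Laplacian,
\[
\flap(\chi W)(y_0) = \frac{C_\alpha}{2}\int_0^\infty \frac{2(\chi W)(y_0) - (\chi W)(y_0+h) - (\chi W)(y_0-h)}{h^{1+2\alpha}}\,dh,
\]
which cancels the non-decaying constant $W(y_0)$ term that would otherwise spoil the estimate, and split the $h$-integration into $\{h \leq 1\}$, $\{1 \leq h \leq L\}$, and $\{h > L\}$. The first region contributes $O(1)$ via the mean value theorem; the last region is absorbed by the outer estimate above. The middle region is the delicate piece: using $|W(y_0 \pm h)| \leq 2(|y_0|+h)^{1/3}$ and the symmetric cancellation yields a contribution of size $|y_0|^{1/3-2\alpha}$. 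Taking the supremum over $y_0 \in \R$ gives $O(1)$ when $\alpha > 1/6$, $L^{1/3-2\alpha} = M^{1-6\alpha} e^{(1/2-3\alpha)s}$ when $\alpha < 1/6$, and $\log L \lesssim M + s$ when $\alpha = 1/6$, which reproduces the stated bound in each case. The main obstacle is coordinating the pointwise bounds through the transition $|y_0| \sim L$, where the symmetric form of the Laplacian is essential; a non-symmetrized estimate would leave a $|y_0|^{1/3}$ contribution from $W(y_0)$ that cannot be controlled by the target.
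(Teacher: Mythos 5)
Your overall architecture (Lipschitz bound at small scales, $|y|^{1/3}$-type growth at intermediate scales, the crude $\norm{W}_{L^\infty}\lesssim Me^{s/2}$ bound at scales beyond $e^{3s/2}$) is the same as the paper's, but the middle-scale step contains a genuine gap. You claim that the symmetrized second-difference form together with the \emph{pointwise} bound $|W(y_0\pm h)|\leqslant 2(|y_0|+h)^{1/3}$ yields a contribution of size $|y_0|^{1/3-2\alpha}$ on $1\leqslant h\leqslant L$. It does not: the pointwise bound gives no cancellation in $2W(y_0)-W(y_0+h)-W(y_0-h)$, so the best you get from it is $\lesssim |y_0|^{1/3}\int_1^L h^{-1-2\alpha}\,dh \sim |y_0|^{1/3}$, and at $|y_0|\sim L= M^3e^{3s/2}$ this is of size $Me^{s/2}$, larger than the target $1+Me^{(\frac12-3\alpha)s}$ by a factor $e^{3\alpha s}$. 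Symmetrization only buys cancellation when one controls second derivatives or exploits concavity; a function satisfying your three stated inputs ($|W'|\leqslant 2$, $|W|\leqslant 2|y|^{1/3}$, $\norm{W}_{L^\infty}\lesssim Me^{s/2}$) can oscillate on scales of order $L^{1/3}$ and defeat the claimed estimate. Also, your motivating remark that a non-symmetrized estimate leaves an uncontrolled ``constant $W(y_0)$ term'' is off the mark: the representation \eqref{eq:singular_int} is already written in terms of the difference $W(y)-W(\eta)$, so no such term appears.

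The missing ingredient is a bound on \emph{differences}, not values: the a posteriori H\"older control $|W(y)-W(\eta)|\lesssim |y-\eta|^{1/3}$, which the paper establishes in Section \ref{sec:holder_bounds} (via \eqref{eq:w_algebraic_bound} together with the decay of $\partial_yW$) and then uses directly. With it, the paper's proof needs no cutoff and no symmetrization: split \eqref{eq:singular_int} into $|\eta-y|<1$ (mean value theorem and \eqref{eq:w1_unif_linf}, giving $O(1)$), $1<|\eta-y|<e^{3s/2}$ (H\"older seminorm, giving $1+e^{(\frac12-3\alpha)s}$ for $\alpha\neq\sfrac16$ and $1+s$ for $\alpha=\sfrac16$), and $|\eta-y|>e^{3s/2}$ (the bound $\lesssim Me^{s/2}$ from \eqref{eq:lagrangian_est_1} and \eqref{eq:kappa_bound}, giving $Me^{(\frac12-3\alpha)s}$). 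If you replace your middle-region argument by this difference bound, your cutoff construction becomes unnecessary and the rest of your outline (near and far regions) is fine; as written, however, the central estimate does not follow from the inputs you invoke.
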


\begin{proof}
  We decompose the domain of the singular integral representation
  \eqref{eq:singular_int} of $\flap$ into three regions and estimate
  \begin{align*}
    |\flap W(y)| \lesssim
    \int_\mathbb{R}\,
    \frac{|W(y) - W(\eta)|}{|y - \eta|^{1+2\alpha}}\, d\eta
    \lesssim \left( \int_{0 < |\eta-y| < 1}
    + \int_{1 < |\eta-y| < e^{3s/2}}
    + \int_{e^{3s/2} < |\eta-y|}\right)\,
    \frac{|W(y) - W(\eta)|}{|y - \eta|^{1+2\alpha}}\,d\eta.
  \end{align*}
  Around $y$, the mean value theorem and \eqref{eq:w1_unif_linf} give
  $|W(y)-W(\eta)|\leqslant |y-\eta|$. Therefore
  \begin{align*}
    \int_{0<|\eta-y|<1}\,
    \frac{|W(y) - W(\eta)|}{|y - \eta|^{1+2\alpha}} d\eta
    \lesssim \int_{0<|\eta-y|<1}\,\frac{1}{|y-\eta|^{2\alpha}}\,d\eta
    \lesssim 1.
  \end{align*}

  In the second region we use the \textit{a posteriori} H\"older
  $C^{1/3}$ seminorm \eqref{eq:w_algebraic_bound} to obtain
  \begin{align*}
    \int_{1<|\eta-y|<e^{3s/2}}\,
    \frac{|W(y) - W(\eta)|}{|y - \eta|^{1+2\alpha}}\,d\eta 
    \lesssim \int_{1<|\eta-y|<e^{3s/2}}\,
      \frac{|y - \eta|^{1/3}}{|y - \eta|^{1+2\alpha}}\,d\eta
      \lesssim
      \begin{cases}
      1 + e^{(\frac{1}{2} - 3\alpha)s} & \alpha \neq \sfrac{1}{6} \\
      1 + s & \alpha = \sfrac{1}{6}.
      \end{cases}
  \end{align*}
  Finally, we use \eqref{eq:lagrangian_est_1} and
  \eqref{eq:kappa_bound} to estimate the third integral
  \begin{align*}
    \int_{e^{3s/2}<|\eta-y|}\,\frac{|W(y) - W(\eta)|}{|y -
    \eta|^{1+2\alpha}}\,d\eta
    \lesssim M e^{\sfrac{s}{2}}
    \int_{e^{3s/2}<|\eta-y|}\,\frac{1}{|y-\eta|^{1+2\alpha}}\,d\eta
    \lesssim M e^{(\frac{1}{2} - 3\alpha)s}.
  \end{align*}
  The three estimates above prove the lemma.
\end{proof}

We proceed to bound the forcing term in \eqref{eq:diff_asymp}, which
is given by
\begin{align}\label{eq:asymp_forcing}
  F_{\widetilde{W}_\nu} =
  \COL{red}{\underbrace{e^{-\sfrac{s}{2}}\dot{\kappa}}_{\circI}} +
  \COL{blue}{\underbrace{e^{(3\alpha-1)s}\flap W}_{\circII}} +
  \COL{orange}{
  \underbrace{\partial_yWe^{\sfrac{s}{2}}(\kappa-\dot{\xi})}_{\circIII}}
  +
  \COL{purple}{\underbrace{\dot{\tau}W\partial_yW}_{\circIV}}
\end{align}
In what should now feel like a \textit{danse famili\`ere}, we go term
by term bounding the forcing in $L^\infty$.

When
$\alpha < \sfrac{1}{4}$, we use the identity \eqref{eq:xi_dot_closed},
the interpolation estimate \eqref{eq:interpolation}, and the bootstrap
\eqref{eq:w3_boot} to obtain
\begin{align*}
    |\COL{red}{\circI}|
  &\leqslant e^{(3\alpha-1)s}\left(\frac{|\flap
    \partial_y^2W^0(s)|}{|\partial_y^3W^0(s)|}
    + |\flap W^0(s)|\right) \\
  &\lesssim M^{\sfrac{1}{6} + 3\alpha}e^{(2\alpha - \frac{1}{2})s} \\
  & \leqslant e^{(\alpha - \frac{1}{4})s}
\end{align*}
On the other hand, when $\alpha \geq \sfrac{1}{4}$, we use
Lemma \ref{prop:flap_0} to handle the second term, and obtain
\begin{align*}
  |\circI| \lesssim \frac{ M^{\sfrac{1}{6} + 3\alpha}}{h^{2\alpha}}
  e^{(3\alpha-1)s} \leqslant e^{\frac{1}{2}(3\alpha - 1)s}.
\end{align*}
Lemma \ref{prop:flapW_decay} means that the second term is bounded by
\begin{align*}
  |\COL{blue}{\circII}| \lesssim
  e^{(3\alpha-1)s}\left(1 + Me^{(\sfrac{1}{2}-3\alpha)s}\right)
  \leqslant e^{-\frac{1}{4}s},
\end{align*}
upon taking $\ep$ small.

The identity \eqref{eq:xi_dot_closed}, the interpolation estimate
\eqref{eq:interpolation}, the bounds
\eqref{eq:2nd_dv_linf}-\eqref{eq:3rd_dv_linf}, the bootstrap
\eqref{eq:w1_boot}, and the lower bound \eqref{eq:asymp_traj_bounds}
on the trajectories gives us
\begin{align*}
  |\COL{orange}{\circIII}|
  \lesssim
  (1+\varb)\jb{y}^{-\sfrac{2}{3}}e^{(3\alpha-1)s}
  M^{\frac{2}{7}(1-\alpha)\vard}
  \lesssim (1+\varb) M^{\frac{2}{7}(1-\alpha)\vard}
  e^{(3\alpha-\frac{4}{3})s}.
\end{align*}

For the last term, the bootstraps
\eqref{eq:w0_boot}-\eqref{eq:w1_boot}, the lower bound
\eqref{eq:asymp_traj_bounds} on our trajectories,
\eqref{eq:mod_deriv_boot_assump}, and taking $\ep$ small gives us the
bound
\begin{align*}
  |\COL{purple}{\circIV}| \leqslant
  (1+\ep^{\frac{1}{2}q})(1+\varb)
  \jb{y}^{-\frac{1}{3}} \leqslant 2e^{-\frac{1}{6}s}.
\end{align*}
To summarize, we have shown that
\begin{align}\label{eq:tildew_forcing_est}
  |F_{\widetilde{W}_\nu}^{\lambda}(s)| \lesssim e^{-ps}, \qquad p =
  p(\alpha) > 0.
\end{align}

\subsubsection{Putting it all Together}
We set $G(y,s)=e^{-\frac{3}{2}(s-s_*)}\widetilde{W}_{\nu}(y,s)$,
then compose with trajectories \eqref{eq:convergence_lagrangian} and
compute
\begin{align*}
  \frac{d}{ds}G^{\lambda}
  &= -\frac{3}{2}G^{\lambda} +
    e^{-\frac{3}{2}(s-s_*)}\frac{d}{ds}\widetilde{W}_\nu^{\lambda} \\
  &= -\frac{3}{2}G^{\lambda} +
    e^{-\frac{3}{2}(s-s_*)}\left[
    \left(\frac{1}{2}-(\Psi_\nu')^{\lambda}\right)
    \widetilde{W}_\nu^{\lambda}
    -\beta_\tau F_{\widetilde{W}_\nu}^{\lambda}
    \right] \\
  &= \left(-1 - (\Psi_\nu')^{\lambda}\right)G^{\lambda}
    - \beta_\tau e^{-\frac{3}{2}(s-s_*)}F_{\widetilde{W}_\nu}^{\lambda}.
\end{align*}
Note that the damping $1+(\Psi_\nu')^{\lambda}\geqslant 0$ for all
$s\geqslant s_*$, since $\norm{\Psi_\nu'}_{L^\infty}=1$ by
\eqref{eq:psi_nu}. Thus we can apply Gr\"onwall's inequality,
our forcing estimate \eqref{eq:tildew_forcing_est}, and our Taylor
expansion \eqref{eq:wnu_taylor}, and take $s_*$ sufficiently large to
obtain
\begin{align*}
  |G^{\lambda}|
  &\leqslant |G(\lambda,s_*)| +
    \beta_\tau \int_{s_*}^s\,e^{-\frac{3}{2}(s'-s_*)}
    |F_{\widetilde{W}_\nu}^{\lambda}(s')|\,ds' \\
  &\lesssim |\widetilde{W}_\nu^{\lambda}(s_*)| +
    2\int_{s_*}^s\,e^{-\frac{3}{2}(s'-s_*)}e^{-ps'}\,ds'
  \\
  &\lesssim |\widetilde{W}_\nu^{\lambda}(s_*)| +
    e^{-p s_*} \\
  &\leqslant \delta + \frac{M^{\frac{1}{2}\vard}}{12}|\lambda|^4 +
    \delta \\
  &\lesssim M^{\frac{1}{2}\vard}|\lambda|^4.
\end{align*}
For all times
$s_*\leqslant s \leqslant s_* + \frac{7}{3}\log|\lambda|^{-1}$,
by our definition of $G$, we have that
\begin{align*}
  |\widetilde{W}^{y_0}_{\nu}|
  &\lesssim
     M^{\frac{1}{2}\vard}|\lambda|^4e^{\frac{3}{2}(s-s_*)}\\
  &\lesssim M^{\frac{1}{2}\vard}|\lambda|^{1/2}.
\end{align*}
For all $y$ between $\lambda$ and
$\Phi(\lambda,s_*+\frac{7}{3}\log|\lambda|^{-1})$, there exists
$s_*\leqslant s\leqslant s_*+\frac{7}{3}\log|\lambda|^{-1}$ such that
$y=\Phi(\lambda,s)$. Therefore, for any pair $(y,s)$ the previous
estimate gives
\begin{align*}
  |\widetilde{W}_{\nu}(y,s)| \lesssim M^{\frac{1}{2}\vard}|\lambda|^{1/2}.
\end{align*}
By composing with our trajectory lower bound this will cover at least
all $y$ such that
\begin{align*}
  \lambda \leqslant |y| \leqslant
  \lambda e^{\frac{1}{2}(s-s_*)} = \lambda^{-1/6}.
\end{align*}
Now, taking the limit that $s_*\rightarrow \infty$, for all $\lambda
\leqslant |y| \leqslant \lambda^{-1/6}$ we have that
\begin{align*}
  \limsup_{s\rightarrow \infty}|\widetilde{W}_\nu(y,s)|\lesssim
  M^{\frac{1}{2}}|\lambda|^{1/2}.
\end{align*}
Finally, taking $\lambda\rightarrow 0$ proves that for all $y\ne 0$
\begin{align*}
  \limsup_{s\rightarrow \infty}|\widetilde{W}_\nu(y,s)|=0.
\end{align*}
This completes the proof of \eqref{eq:convergence_sup}.

\subsection{Proof of Theorem \ref{thm:stable_main_thm}}
\label{sec:proof_of_stable_main}

We are finally equipped to prove our main theorem, Theorem
\ref{thm:stable_main_thm}.

\begin{enumerate}[label=(\roman*)]
\item \textbf{(Solution is smooth before $T_*$).}
  Recall the uniform
  $L^2$ bound for $\partial_yW$ proven in Lemma
  \ref{lem:uniform_w_1_l2} and the uniform $L^2$ bound of
  $\partial_y^6W$ proven in \eqref{eq:energy_estimate}; interpolation
  via Gagliardo-Nirenberg (Lemma \ref{lem:gn_interpolation}) bounds
  the intermediary $L^2$ norms of $W$ uniformly in self-similar time.
  
  From the transformation \eqref{eq:self_similar_w_ansatz} and by
  differentiating through the $L^2$ norm we obtain the following
  family of identities
  \begin{align*}
    \norm{\partial_x^nu}_{L^2} =
    e^{\left(-\frac{5}{4}+\frac{3}{2}n\right)s}\norm{\partial_y^nW}_{L^2}
  \end{align*}
  relating the $L^2$ norms in physical space to the $L^2$ norms in the
  self-similar space. Note that
  Burgers equation satisfies $L^2$ conservation and the fractional
  term may be dropped from estimates, ensuring that $\norm{u}_{L^2}$ is
  uniformly bounded. These identities along with the uniform
  boundedness of the $L^2$ norms for $\partial_yW$ through
  $\partial_y^6W$ shows that the $H^6$ norm of $u$ remains finite for
  all times prior to $T^*$.

  From the above considerations and the local-in-time existence proven
  in \cite{kiselev_2008, alibaud_2007} we deduce that $u\in
  C([-\ep,\overline{T}]; H^6(\R))$ for any $\overline{T}<T_*$.

\item \textbf{(Blowup location is unique).}  Fix
  $x^\flat \ne x_*$. Differentiating the transformation
  \eqref{eq:self_similar_w_ansatz} gives the identity
  \begin{align*}
    \partial_x u(x^\flat,t) =
    e^{s}\partial_yW\left((x^\flat-\xi(t))e^{\frac{3}{2}s},
    s\right).
  \end{align*}

  Because $x^\flat\ne x_*$ and $\xi(t)\rightarrow x_*$
  as $t\rightarrow T_*$, there exists a time $t^{\flat}$ such that for
  all $t^\flat < t \leqslant T_*$ we can choose
  $1 > \lambda > 0$ such that
  \begin{align*}
    |x^\flat-\xi(t)| > \lambda > 0.
  \end{align*}
  Therefore
  \begin{align*}
    |x^\flat - \xi(t)|e^{\frac{3}{2}s} > \lambda e^{\frac{3}{2}s},
  \end{align*}
  and for all
  \begin{align*}
    s > s_\lambda:=
    \max\{
    \frac{3}{2}\left(\frac{1}{3/2-m}\right)\log\lambda^{-1},
    -\log(T_*-t^\flat)
    \},
  \end{align*}
  we satisfy the inequality
  \begin{align*}
    |x^\flat - \xi(t)|e^{\frac{3}{2}s} > e^{ms}.
  \end{align*}
  We emphasize that the lower bound $s_\lambda$ on times for which the
  above inequality holds grows arbitrarily large as
  $\lambda\rightarrow 0^+$.

  We can apply our bootstrap \eqref{eq:w1_boot_far} to find that for
  all $s$ in this range
  \begin{align*}
    |\partial_yW\left((x^\flat-\xi(t))e^{\frac{3}{2}s},
    s\right)| \lesssim e^{-s},
  \end{align*}
  and hence
  \begin{align*}
    \limsup_{t\rightarrow T_*} |\partial_x u(x^\flat,t)|
    \lesssim 1.
  \end{align*}
  Thus the blowup location is unique.
  
\item \textbf{(Blowup time and location).}  From our definition of
  $\tau$, the blowup time $T_*$ is the unique fixed point
  $\tau(T_*)=T_*$, and in light of \eqref{eq:mod_var_constraints} this
  is equivalent to
  \begin{align*}
    \int_{-\ep}^{T_*}\,(1-\dot{\tau}(t))\,dt = \ep.
  \end{align*}
  Applying the bootstrap closure \eqref{eq:tau_dot_decay} we find that
  \begin{align*}
    \ep = \int_{-\ep}^{T_*}\,(1-\dot{\tau}(t))\,dt \geqslant
    \int_{-\ep}^{T_*}\,1-\frac{3}{4}\ep^{\frac{8}{9}(1-3\alpha)}\,dt,
  \end{align*}
  from which it follows that
  $|T_*|\leqslant \frac{3}{4}\ep^{\frac{8}{9}(1-3\alpha)}$ upon
  choosing $\ep$ sufficiently small.

  Similarly, our definition of $\xi$ together with
  \eqref{eq:mod_var_constraints} gives the condition
  \begin{align*}
    \int_{-\ep}^{T_*}\,\dot{\xi}(t)\,dt = x_*.
  \end{align*}
  Applying the bootstrap closure \eqref{eq:xi_estimate} gives us
  \begin{align*}
    x_*= \int_{-\ep}^{T_*}\,\dot{\xi}(t)\,dt \leqslant
    2M(T_*+\ep) \leqslant 3M\ep.
  \end{align*}
  This proves the claimed bounds on $T_*$ and $y_*$.
  
\item \textbf{(Precise control of $\partial_x u$ at $t=T_*$).}
  $u$ develops a shock (gradient blowup) at $t=T_*$. Recall the
  identity \eqref{eq:self_similar_variables} which, together with
  differentiating the transformation \eqref{eq:self_similar_w_ansatz},
  gives
  \begin{align*}
    \partial_x u(\xi(x),t) =
    \frac{1}{\tau(t)-t}\partial_yW(0,s) \geqslant -\frac{1}{\tau(t)-t}.
  \end{align*}
  
  Next note that for all $-\ep \leqslant t< T_*$ we have that
  \begin{align*}
    \frac{1}{2}\leqslant \frac{\tau(t)-t}{T_*-t}\leqslant 1.
  \end{align*}
  The upper bound is obvious since $\tau(t)$ is monotone
  increasing and $T_*$ is the unique fixed point of $\tau$. The lower
  bound is a consequence of the fact that
  \begin{align*}
    \frac{T_*}{2} \leqslant \tau(t)-\frac{1}{2}t.
  \end{align*}
  This follows from the fact that $\tau(t)-t/2$ is monotone decreasing
  and at time $t=T_*$ the r.h.s. is $T_*/2$. We conclude that
  \begin{align*}
    -\frac{1}{\tau(t)-t} \leqslant
    \partial_x u(\xi(x),t) = -\norm{\partial_x
    u(\cdot,t)}_{L^\infty}
    \leqslant -\frac{1}{2}\frac{1}{\tau(t)-t}.
  \end{align*}
  Thus we have proven the desired behavior of the gradient at the
  singularity.

\item \textbf{($W\rightarrow \Psi$ asymptotically in self-similar
    space).}
  This was proved above in Section \ref{sec:asymptotic_convergence}.
  
\item \textbf{(Shock is H\"older $\sfrac{1}{3}$).}
  This is a consequence of the uniformity of the H\"older bound which
  was shown above in Section \ref{sec:holder_bounds}. Take the limit
  as $s\rightarrow \infty$ to find that $u(y,T_*)\in C^{1/3}$.
\end{enumerate}

\subsection{Proof of Corollary \ref{cor:open_set}}\label{sec:open_set}
We begin by noting that $\kappa_0$ and $\ep$ can be taken in an open
neighborhood since all of our previous arguments require only that
$\ep$ is sufficiently small. From \eqref{eq:w_init_origin}, this
implies that $u_0(0)$ and $\partial_x u_0(0)$ can be taken in an
open set.

Interpolating between \eqref{eq:linf_third_physical} and
\eqref{eq:l2_fifth_physical} yields
$\|\partial_x^4u_0\|_{L^\infty} =
\mathcal{O}(\ep^{-11/2})$. Using this in a Taylor expansion for
$\partial_x^2 u_0$ around the origin, we find that for all $0 <
|\bar{x}|< |x|$
\begin{align*}
\partial_x^2 u_0(x) &= \partial_x^2 u_0(0) + \partial_x^3u_0(0) x 
+ \frac{ \partial_x^4 u_0(\bar{x}) }{2} \bar{x}^2\\
&= \partial_x^2 u_0(0) + 6 \ep^{-4} x + x( \partial_x^3 u_0(0) - 6 \ep^{-4})
+ \mathcal{O}(\ep^{-11/2})\bar{x}^2.
\end{align*}
We will show that we can relax $\partial_x^2 u_0(0) = 0$ and
\eqref{eq:u0_d3_at_0} by assuming instead that
$|\partial_x^2 u_0(0)|$ is sufficiently small, and considering a
small interval around 0 for $x$ on which
$|\partial_x^3 u_0(x) - 6 \ep^{-4} | < 7
\ep^{-\sfrac{4}{9}(5+12\alpha)}$. We can take
$x( \partial_x^3 u_0(0) - 6 \ep^{-4})$ small relative to
$6 \ep^{-4} x$, and similarly for
$\mathcal{O}(\ep^{-11/2})\bar{x}^2$.  Since $ 6 \ep^{-4} x$
dominates, by intermediate value theorem, there exists $x^*$
sufficiently small so $\partial_x^2 u_0(x^*) = 0$. We can
now simply change coordinates $x \mapsto x + x^*$.

The inequalities in
\eqref{eq:u0_init_data}-\eqref{eq:l2_fifth_physical} can be replaced
by strict inequalities by introducing a pre-factor slightly greater
than 1. A sufficiently small $H^6$ perturbation preserves all these
inequalities by slightly enlarging the pre-factor. n particular, by
Sobolev embedding, a small $W^{5,\infty}$ perturbation will satisfy
these inequalities.

\subsection{Stable Modulation Past $\alpha=1/3$}
\label{sec:failure_of_mod} 
Since the stable Burgers profile satisfies $-1 < \Psi'(x) < 0$ for
$x \neq 0$ with $\Psi'(0) = -1$, it follows from
\eqref{eq:singular_int} that $\flap[\Psi'](0) < 0$.  If we consider $W$ to be a perturbation of $\Psi$,
 the modulation constraint \eqref{eq:tau_dot_closed} for $\tau$ amounts to
\begin{align}
  \dot{\tau} \approx e^{(3\alpha-1)s}|\flap[\Psi'](0)|.
\end{align}
When $\alpha \geqslant \sfrac{1}{3}$, the right-hand side is positive
and bounded away from zero for all $s$, so the modulation parameter
$\tau$ diverges as $s\to\infty$.

Oh and Pasqualotto \cite{oh_2021} have recently proven that the
solutions to fractal Burgers equation \eqref{eq:fvbeq} in the range
$\sfrac{1}{3}\leqslant \alpha < \sfrac{1}{2}$ with specifically chosen
initial data converge in an asymptotically self-similar manner to
\textit{unstable} profiles. However these solutions are not stable
under \textit{generic} perturbations.

The blowup criterion supplied by Dong, Du, and Li \cite{dong_2009} is
continuous which means that there is stable singularity formation in
the range $\sfrac{1}{3}\leqslant \alpha < \sfrac{1}{2}$. Our numerics
suggest that this stable blowup profile is not the stable Burgers
profile.

\appendix
\section{Toolbox}\label{app:toolbox}
\subsection{A Framework for Weighted Transport
  Estimates}\label{app:transport_estimates}
We present a framework for doing weighted transport estimates which
was originally introduced in \cite{buckmaster_2019_1}. See also
Section 6.5 of \cite{yang_2020}. We sketch the main ideas here. Our
goal is to bound the $L^\infty$ evolution of a transport-type equation
by using information over some spatial region.

Consider the forced-damped transport equation
\begin{align*}
  \partial_s\mathcal{U} + \mathcal{D}\mathcal{U}
  + g_W\partial_y\mathcal{U} = \mathcal{F}.
\end{align*}
We denote the damping $\mathcal{D}$ and the forcing $\mathcal{F}$. The
advection velocity $g_W$ is defined in \eqref{eq:transport_speed}.

We consider the weighted quantity $V:=\jb{x}^{p}\mathcal{U}$, where
$p$ is some power. It is straightforward to compute the evolution of
$V$
\begin{align}\label{eq:general_v_evolution}
  \partial_sV + \left(\mathcal{D} - 2g_Wpx\jb{x}^{-1}g_W\right)V +
  g_wV = \jb{x}^{p}\mathcal{F}.
\end{align}

Consider the Lagrangian trajectory $\Phi^{x_0}$ obtained above by
solving \eqref{eq:lagrange_traj}, and set $V^{x_0}(s) := V\circ
\Phi^{x_0}(s)$. Composing the evolution
\eqref{eq:general_v_evolution} with $\Phi^{x_0}(s)$ gives 
\begin{align*}
  \frac{d}{ds} V^{x_0} +
  \left(\mathcal{D}\circ\Phi^{x_0} -
  2g_Wp\Phi^{x_0}\jb{\Phi^{x_0}}^{-1}g_W\right)V^{x_0} +
  g_WV^{x_0} = \jb{\Phi^{x_0}}^{p}\mathcal{F}\circ\Phi^{x_0}.
\end{align*}
This equation can be solved via separation of variables to obtain
\begin{align}\label{eq:general_v_solution}
  V^{x_0}(s) = V(x_0)e^{-\int_{s_*}^s\,\mathcal{D}\circ
  \Phi^{x_0}(s')\,ds'} + \int_{s_*}^s\,\mathcal{F}\circ\Phi^{x_0}(s')e^{-\int_{s_*}^{s'}\,\mathcal{D}\circ
  \Phi^{x_0}(s'')\,ds''}\,ds',
\end{align}
where $s_*$ is the first time that the trajectory enters the region in
question.

\subsection{Lemmas}
We make frequent use of the following lemma to bound the fractional
Laplacian in terms of our $L^\infty$ bootstraps.

\begin{lemma}\label{prop:bound_flap}
  \textbf{($L^\infty$ Interpolation of the Fractional Laplacian.)}
  Let $u\in W^{1,\infty}(\R)$ and
  $\alpha \in (0,\sfrac{1}{2})$, then $\flap u \in L^\infty$ and
  satisfies the following estimate
  \begin{align}\label{eq:interpolation}
    \norm{\flap u}_{L^\infty} \lesssim \norm{u}_{L^\infty}^{1-2\alpha}
    \norm{u'}_{L^\infty}^{2\alpha}.
  \end{align}
\end{lemma}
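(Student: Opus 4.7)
The plan is to start from the singular integral representation \eqref{eq:singular_int} and exploit its principal-value cancellation by passing to the symmetric second-difference form
\[
\flap u(x) = \frac{C_\alpha}{2} \int_{\R} \frac{2u(x) - u(x+h) - u(x-h)}{|h|^{1+2\alpha}}\,dh,
\]
obtained by averaging the integrand in $\eta$ with the reflected integrand in $2x-\eta$. This symmetrization is the key step that cancels the odd part of $u(x) - u(\eta)$ and makes the integral absolutely convergent, so afterwards we no longer need to invoke the principal value.

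Next I would split the $h$-integral at an arbitrary scale $r>0$, to be optimized at the end. On the near region $|h|<r$, I rewrite
\[
2u(x) - u(x+h) - u(x-h) = \bigl(u(x) - u(x+h)\bigr) - \bigl(u(x-h) - u(x)\bigr),
\]
and bound each difference by the mean value theorem to get $|2u(x)-u(x+h)-u(x-h)| \leq 2\|u'\|_{L^\infty}|h|$. The resulting integrand is $|h|^{-2\alpha}$, which is integrable near $0$ since $\alpha<\tfrac12$, giving a near-field contribution $\lesssim r^{1-2\alpha}\|u'\|_{L^\infty}$. On the far region $|h|>r$, I use the crude bound $|2u(x)-u(x+h)-u(x-h)| \leq 4\|u\|_{L^\infty}$; the integrand is $|h|^{-1-2\alpha}$, which is integrable at infinity since $\alpha>0$, yielding a far-field contribution $\lesssim r^{-2\alpha}\|u\|_{L^\infty}$.

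Adding the two pieces,
\[
\|\flap u\|_{L^\infty} \lesssim r^{1-2\alpha}\|u'\|_{L^\infty} + r^{-2\alpha}\|u\|_{L^\infty},
\]
and I optimize by choosing the cutoff $r = \|u\|_{L^\infty}/\|u'\|_{L^\infty}$, which balances the two terms and produces exactly the claimed bound $\|u\|_{L^\infty}^{1-2\alpha}\|u'\|_{L^\infty}^{2\alpha}$. There is no real obstacle here, since once the second-difference form is set up the argument is a standard Littlewood--Paley-free interpolation; the only point requiring care is the justification of the symmetrization, which needs the principal value to make sense of the original representation and a dominated convergence argument (or a direct cutoff at radius $\delta\to 0$) to commute the p.v.\ with the symmetrizing change of variable. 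The edge cases $\|u'\|_{L^\infty} = 0$ (where $u$ is constant and $\flap u \equiv 0$) and $\|u\|_{L^\infty} = 0$ are both trivial, so the homogeneous optimization is always legitimate.
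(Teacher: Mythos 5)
Your proof is correct and follows essentially the same route as the paper: split the singular integral at a radius, use the mean value theorem on the near region, the $L^\infty$ bound on the far region, and optimize the radius to $\norm{u}_{L^\infty}/\norm{u'}_{L^\infty}$. The only difference is your preliminary symmetrization to the second-difference form, which is harmless but unnecessary here, since for $\alpha<\sfrac{1}{2}$ and $u$ Lipschitz the first-difference integrand is already absolutely integrable near the singularity (as the paper's proof uses directly).
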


\begin{proof}
  Let $0 < h$ to be chosen later and write the fractional
  Laplacian in its singular integral form \eqref{eq:singular_int}
  \begin{align*}
    \frac{1}{C_\alpha}\flap u
    &=
    \int_{\R\setminus
    \B_h(x)}\,\frac{u(x)-u(y)}{|x-y|^{2\alpha + 1}}\,dy
    + \int_{\B_h(x)}\,\frac{u(x)-u(y)}{|x-y|^{2\alpha +
      1}}\,dy \\
    &=: I_1 + I_2.
  \end{align*}
  For the first integral we estimate
  \begin{align*}
    |I_1| \leqslant \int_{\R\setminus
    \B_h(x)}\,\frac{u(x)-u(y)}{|x-y|^{1+2\alpha}}\,dy
    &\leqslant 4\norm{u}_{L^\infty}\int_{h}^\infty\,
      \frac{1}{y^{1+2\alpha}}\,dy \\
    &\leqslant \frac{1}{\alpha h^{2\alpha}}\norm{u}_{L^\infty}.
  \end{align*}
  For the second integral, the mean value theorem gives
  \begin{align*}
    u'(\eta,t) = \frac{u(x,t)-u(y,t)}{x - y}, \qquad |\eta| \leqslant
    |x-y|.
  \end{align*}
  This implies that
  \begin{align*}
    |I_2| \leqslant \int_{\B_h(x)}\,\frac{u(x)-u(y)}
    {|x-y|^{1+2\alpha}}\,dy
    &= \int_{\B_h(x)}\,\frac{u'(\eta)}{|x-y|^{2\alpha}}\,dy \\
    &\leqslant 2\norm{u'}_{L^\infty}\int_0^h\frac{1}{y^{2\alpha}}\,dy \\
    &= \frac{1}{1-2\alpha}h^{1-2\alpha}\norm{u'}_{L^\infty}.
  \end{align*}
  
  To summarize, we now have
  \begin{align*}
    \frac{1}{C_\alpha}\flap u \leqslant \frac{1}{\alpha
    h^{2\alpha}}\norm{u}_{L^\infty} +
    \frac{1}{1-2\alpha}h^{1-2\alpha}\norm{u'}_{L^\infty}.
  \end{align*}
  Letting $A=\norm{u}_{L^\infty}$,
  $B=\norm{u'}_{L^\infty}$, and $h=A^\beta B^\gamma$, we
  see that the only possible scaling is $\beta=1,\gamma=-1$, i.e. we
  choose $h=\norm{u}_{L^\infty}/\norm{u'}_{L^\infty}$ and
  obtain
  \begin{align*}
    \frac{1}{C_\alpha}\flap u \leqslant
    \left(\frac{1}{\alpha}+\frac{1}{1-2\alpha}\right)
    \norm{u}_{L^\infty}^{1-2\alpha}\norm{u'}_{L^\infty}^{2\alpha}.
  \end{align*}
\end{proof}

We also use the Gagliardo-Nirenberg Interpolation Theorem for $\R$.

\begin{lemma}\label{lem:gn_interpolation}
  \textbf{(Gagliardo-Nirenberg Interpolation).}
  Fix $1\leqslant q, r\leqslant \infty$, $j,m\in \N$ with
  $\sfrac{j}{m}\leqslant \theta\leqslant 1$. If we have the following
  relationship
  \begin{align*}
    \frac{1}{p} = j + \theta\left(\frac{1}{r}-m\right) +
    \frac{1-\theta}{q},
  \end{align*}
  then
  \begin{align*}
    \norm{\partial_x^ju}_{L^p}\lesssim
    \norm{\partial_x^mu}_{L^r}^\theta\norm{u}_{L^q}^{1-\theta}.
  \end{align*}
  The implicit constant depends only on $j,m,r,p,q$, and
  $\theta$ (i.e. is independent of $u$).
  
\end{lemma}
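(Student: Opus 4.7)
The plan is to establish this classical Gagliardo--Nirenberg inequality in three stages: first the pure $L^\infty$--$L^\infty$ case via an elementary Taylor expansion, and then the extension to general $L^p$ via a Littlewood--Paley decomposition combined with Bernstein's inequality. By density we may restrict throughout to $u \in C_c^\infty(\R)$, so that every quantity in sight is finite and all integrations by parts are legitimate.

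First I would prove the Landau--Kolmogorov base case
\begin{align*}
  \norm{u'}_{L^\infty}^2 \leqslant 2 \norm{u}_{L^\infty}\norm{u''}_{L^\infty}.
\end{align*}
Taylor expanding $u(x \pm h)$ with integral remainder and subtracting yields
\begin{align*}
  u'(x) = \frac{u(x+h) - u(x-h)}{2h}
  - \frac{1}{2h}\int_0^h (h-t)\bigl[u''(x+t) - u''(x-t)\bigr]\,dt,
\end{align*}
from which $|u'(x)| \leqslant h^{-1}\norm{u}_{L^\infty} + \tfrac{h}{2}\norm{u''}_{L^\infty}$; optimizing in $h > 0$ gives the claim. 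Iterating this two-term estimate produces the full $L^\infty$-chain
\begin{align*}
  \norm{u^{(j)}}_{L^\infty} \lesssim
  \norm{u}_{L^\infty}^{1 - j/m}\norm{u^{(m)}}_{L^\infty}^{j/m},
  \qquad 0 \leqslant j \leqslant m,
\end{align*}
which is precisely the lemma at the endpoint $p = q = r = \infty$, $\theta = j/m$.

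To reach arbitrary $(p,q,r,\theta)$, I would invoke a Littlewood--Paley decomposition $u = \sum_{k \in \Z} P_k u$ with dyadic frequency projectors $\{P_k\}$. Bernstein's inequality gives
\begin{align*}
  \norm{\partial_x^j P_k u}_{L^p} \lesssim
  2^{k(j + 1/r - 1/p)}\norm{P_k u}_{L^r}
\end{align*}
whenever $r \leqslant p$, and analogous bounds in the reverse direction. Splitting the frequency sum at a dyadic cutoff $N$, I would control the low-frequency piece by $\norm{u}_{L^q}$ and the high-frequency piece by $\norm{\partial_x^m u}_{L^r}$; the scaling relation in the hypothesis is exactly the identity that makes both resulting geometric series summable on their respective sides. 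Optimizing the cutoff $N$ produces the exponent $\theta$ and the claimed inequality.

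The main obstacle will be the endpoint cases $q = \infty$ or $r = \infty$, where Littlewood--Paley analysis becomes delicate because the frequency projectors are not uniformly bounded on $L^\infty$. In those cases I would bypass the Paley argument by substituting the stage-one $L^\infty$-chain directly and closing the estimate with log-convexity of $L^p$-norms (real interpolation) between two non-endpoint cases already established. Since the implicit constants depend only on $(j,m,r,p,q,\theta)$, no loss in the dependencies is introduced by this patching, and the proof is complete.
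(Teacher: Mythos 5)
The paper does not prove this lemma at all: it is quoted as the classical Gagliardo--Nirenberg interpolation theorem and used as a black box, so there is no in-paper argument to compare yours against. Judged on its own, your sketch is the standard modern proof --- Landau--Kolmogorov by Taylor expansion and optimization in $h$ for the $L^\infty$ chain (that computation is correct), then Littlewood--Paley plus Bernstein with a frequency cutoff chosen so that the scaling relation balances the low- and high-frequency geometric series --- and it is essentially sound, certainly for the only instances the paper actually invokes (combinations of $p,q,r\in\{2,\infty\}$ with $\theta=j/m$). Two caveats. First, your stated ``main obstacle'' is misdiagnosed: smooth dyadic projectors \emph{are} uniformly bounded on $L^\infty$ (they are convolutions with $L^1$-normalized dilates of a fixed Schwartz kernel); what is genuinely delicate at the edges is rather (a) configurations where one of the Bernstein exponents $j+\sfrac{1}{q}-\sfrac{1}{p}$ or $j-m+\sfrac{1}{r}-\sfrac{1}{p}$ vanishes, so the dyadic sum is borderline, and (b) the classical exceptional case $1<r<\infty$, $\theta=1$, $m-j-\sfrac{1}{r}\in\Z_{\geqslant 0}$, where the inequality as stated in the lemma can actually fail --- a caveat the lemma's formulation itself suppresses. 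Second, the final ``patch by log-convexity between two non-endpoint cases'' is left vague; log-convexity interpolates in the Lebesgue exponent of a fixed function, and you would need to say precisely which pairs of established estimates are being combined and check the exponent bookkeeping, or else handle the endpoint cases by a direct argument (for the paper's purposes, the $p=q=r=\infty$ chain from stage one together with the $L^2$-based non-endpoint cases already suffices). Also, restricting to $C_c^\infty$ ``by density'' needs a word of justification when $q$ or $r$ is $\infty$, since $C_c^\infty$ is not dense there; a mollification-and-truncation limiting argument is the standard fix.
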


\section{Derivation and Properties of the Self-Similar Burgers
  Profile Family}\label{app:burgers}
Consider the similarity equation for the Burgers equation
\cite{eggers_2015}
\begin{align}\label{eq:burgers_similarity_equation}
  -\frac{1}{2}\Psi + \left(\frac{3}{2}y + \Psi\right)\Psi' = 0.
\end{align}
This is a homogeneous ODE in $\Psi$. It is elementary to obtain the
following implicit family of solutions
\begin{align}\label{eq:burgers_family_solutions}
  x = - \Psi_\nu - \frac{\nu}{6}\Psi^3_\nu,
\end{align}
where $\nu/6$ is the constant of integration. We denote by $\Psi_\nu$
the solution of \eqref{eq:burgers_similarity_equation} corresponding
to a particular choice of $\nu$.

Taking successive (implicit) derivatives we easily obtain the
following properties of $\Psi_\nu$:
\begin{align}
  \Psi_\nu(0)=0, \quad \Psi_\nu'(0)=-1, \quad \Psi_\nu''(0)=0, \quad
  \Psi_\nu'''(0)=\nu, \quad \Psi_\nu^{(2k)}(0)=0, \quad k\in \N.
\end{align}
This shows that the solution family
\eqref{eq:burgers_family_solutions} is parameterized by it's third
derivative at the origin.

We take by convention $\Psi:=\Psi_6$, which has the explicit solution
\begin{align}\label{eq:stable_burg_explicit}
  \Psi(x)=
  \left(-\frac{x}{2}+\left(\frac{1}{27}+\frac{x^2}{4}\right)^{1/2}\right)^{1/3}
  -\left(\frac{x}{2}+\left(\frac{1}{27}+\frac{x^2}{4}\right)^{1/2}\right)^{1/3}.
\end{align}
We use this particular profile throughout our analysis. Furthermore we
can Taylor expand the derivative $\Psi'$ around $x=0$ to obtain
\begin{align}\label{eq:psi_x_taylor}
  \Psi'(x) = -1 + 3x^2 - 15x^4 + \mathcal{O}(x^6).
\end{align}

In fact, once we have obtained the solution for $\Psi$, the solutions
for all of the profiles are obtained through the formula
\begin{align}\label{eq:psi_nu}
  \Psi_\nu(x) = \left(\frac{\nu}{6}\right)^{-1/2}
  \Psi\left(\left(\frac{\nu}{6}\right)^{1/2}x\right).
\end{align}
This can be verified by noting that if $\Psi$ solves
\eqref{eq:burgers_family_solutions} for $\nu=1$, then $\Psi_\nu$
solves the same equation with, excusing the abuse of notation,
$\nu=\nu$.

We note that the explicit form of \textit{any} of the solutions in
the family \eqref{eq:burgers_family_solutions} can be obtained
quite easily using Mathematica or some other solver (or by hand if the
reader is a spiritual medium for Gerolamao Cardano),
since the explicit solution \eqref{eq:stable_burg_explicit} is nothing
more than the solution of the cubic $x=-y-\nu y^3$.

Once an explicit solution is obtained, the reader can easily verify
the following inequalities which (we suggest using Mathematica)
\begin{align}\label{eq:psi_0_13}
  |\Psi(x)| \leqslant |x|^{1/3},
\end{align}
along with the Japanese bracket estimates
\begin{align}\label{eq:psi_jb}
  |\partial_x^i \Psi(x)|\lesssim \jb{x}^{\sfrac{1}{3} - i}, 
\end{align}
for $i = 1,...,5$ and $x>1$. In the case of $i=1,2$ the constant can
be taken to be $1$. The constant for $i=5$, for example, can be taken
as $360$.

The fractional Laplacian applied to the stable profile $\Psi^{(n)}$
behaves exactly as one would expect, namely we have the following
pointwise estimate:

\begin{lemma}\label{lem:decayEst}
  The bounds \eqref{eq:psi_jb} imply
  \begin{align}\label{eq:psi_flap_control}
    |\flap \Psi'(x) | \lesssim \jb{x}^{-\sfrac{2}{3}-2\alpha}.
  \end{align}
\end{lemma}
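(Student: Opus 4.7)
The plan is to bound $|\flap\Psi'(x)|$ directly via its singular integral representation \eqref{eq:singular_int}. For $|x|\leqslant 1$ the desired bound reduces to $\norm{\flap\Psi'}_{L^\infty}\lesssim 1$, which follows from the interpolation estimate \eqref{eq:interpolation} applied to $\Psi'$ together with the pointwise bounds on $\Psi',\Psi''$ from \eqref{eq:psi_jb}. So the nontrivial case is $|x|>1$, and by the odd symmetry of $\Psi'$ I may take $x>1$.

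For $x>1$ I will split the principal value integral
\begin{align*}
  \flap\Psi'(x)
  = C_\alpha\,\text{p.v.}\!\int_\R
    \frac{\Psi'(x)-\Psi'(\eta)}{|x-\eta|^{1+2\alpha}}\,d\eta
\end{align*}
into the near region $N_x=\{\eta:|\eta-x|<x/2\}$ and the far region $F_x=\R\setminus N_x$. On $N_x$ I will Taylor expand $\Psi'(\eta)=\Psi'(x)+\Psi''(x)(\eta-x)+R(x,\eta)$ with $|R(x,\eta)|\leqslant\tfrac12|\eta-x|^2\sup_{\zeta\in N_x}|\Psi'''(\zeta)|$. The linear term integrates to zero by the principal value, and since $|\zeta|\sim x$ on $N_x$ the bound \eqref{eq:psi_jb} gives $|\Psi'''(\zeta)|\lesssim x^{-8/3}$. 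The remaining integrand is integrable against $|\eta-x|^{-1-2\alpha}$ precisely because $2\alpha<1$, yielding
\begin{align*}
  \left|\int_{N_x}\tfrac{R(x,\eta)}{|x-\eta|^{1+2\alpha}}\,d\eta\right|
  \lesssim x^{-8/3}\int_0^{x/2}r^{1-2\alpha}\,dr
  \lesssim x^{-2/3-2\alpha}.
\end{align*}

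For the far region I will split the integrand into the $\Psi'(x)$ piece and the $\Psi'(\eta)$ piece. The first gives $|\Psi'(x)|\int_{F_x}|x-\eta|^{-1-2\alpha}d\eta\lesssim x^{-2/3}\cdot x^{-2\alpha}$. For the second I further split $F_x$ at $|\eta|=2x$. On $F_x\cap\{|\eta|\leqslant 2x\}$ the kernel is comparable to $x^{-1-2\alpha}$, and $\int_{|\eta|\leqslant 2x}|\Psi'(\eta)|\,d\eta\lesssim\int_0^{2x}\jb{\eta}^{-2/3}d\eta\lesssim x^{1/3}$, again producing $x^{-2/3-2\alpha}$. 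On $\{|\eta|>2x\}$ one has $|\eta-x|\gtrsim|\eta|$, and $\int_{2x}^\infty\eta^{-5/3-2\alpha}d\eta\lesssim x^{-2/3-2\alpha}$, completing the estimate.

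The only real subtlety is the Taylor cancellation in the near region: one needs $2\alpha<1$ (available throughout the paper's range $\alpha<1/3$) so that $|\eta-x|^{1-2\alpha}$ is locally integrable after the first order Taylor term is removed, and one needs the bound on $\Psi'''$ rather than just $\Psi''$, which \eqref{eq:psi_jb} supplies. The far-region split is routine once the decay rate of $\Psi'$ is combined with the kernel, and I expect no obstacle there beyond careful bookkeeping of exponents.
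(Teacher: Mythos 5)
Your proof is correct, and at the level of overall strategy it matches the paper's: both estimate the singular integral \eqref{eq:singular_int} directly after splitting $\R$ into a piece near the diagonal $\eta\approx x$ and far pieces, and then feed in the pointwise decay \eqref{eq:psi_jb}. The only genuine difference is how the near-diagonal piece is handled: the paper bounds the local H\"older seminorm, $|\Psi'|_{C^{3\alpha}(x/2,x)}\lesssim\norm{\Psi'}_{L^\infty(x/2,x)}^{1-3\alpha}\norm{\Psi''}_{L^\infty(x/2,x)}^{3\alpha}\lesssim\jb{x}^{-2/3-3\alpha}$, and integrates $|x-\eta|^{\alpha-1}$, whereas you Taylor expand to second order, cancel the linear term by the symmetry of $N_x$ about $x$, and pay with $\Psi'''$. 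Your route works, but it is heavier than necessary: since $2\alpha<1$, the first-order mean value bound $|\Psi'(x)-\Psi'(\eta)|\leqslant|x-\eta|\sup_{N_x}|\Psi''|\lesssim|x-\eta|\,x^{-5/3}$ already gives $\int_{N_x}|x-\eta|^{-2\alpha}\,d\eta\lesssim x^{1-2\alpha}$ and hence the desired $x^{-2/3-2\alpha}$, with no principal-value cancellation and no third derivative; this is essentially what the paper's interpolation step accomplishes, so your parenthetical claim that one ``needs'' $\Psi'''$ rather than $\Psi''$ is not accurate. Two small bookkeeping points: $\Psi$ is odd, so $\Psi'$ is even rather than odd; the reduction to $x>1$ is still legitimate because $\flap\Psi'$ is then even. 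Also, on $N_x$ you invoke \eqref{eq:psi_jb} at points $\zeta\in(x/2,1]$ where it is stated only for arguments larger than $1$; this is harmless because $\Psi$ is smooth with globally bounded derivatives, but deserves a word. Your far-region split at $|\eta|=2x$ is equivalent to the paper's five-interval decomposition and produces the same exponents.
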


\begin{proof}
  Without loss of generality, suppose $x > 0$. 
  We decompose the integral form \eqref{eq:singular_int} of $\flap$ by
  \begin{align*}
    (-\Delta)^\alpha \Psi'(x) = C_\alpha
    \left(\int_{-\infty}^{-x} + \int_{-x}^{x/2} + \int_{x/2}^{x}
    + \int_{x}^{2x} + \int_{2x}^{\infty} \right)\,\frac{ \Psi'(x) -
    \Psi'(y) }{|x-y|^{1+2\alpha}} \,dy.
  \end{align*}
  The first integral can be bounded as
  \begin{align*}
    \int_{-\infty}^{-x}\,\frac{|\Psi'(x) -
    \Psi'(y)|}{|x-y|^{1+2\alpha}} \,dy
    \lesssim \jb{x}^{-\sfrac{2}{3}}
    \int_{-\infty}^{-x} \frac{1}{|x-y|^{1+2\alpha}} \, dy \lesssim
    \jb{x}^{-\sfrac{2}{3}-2\alpha}.
  \end{align*}
  The second integral can be bounded with
  \begin{align*}
    \int_{-x}^{x/2}\,\frac{|\Psi'(x) -
    \Psi'(y)|}{|x-y|^{1+2\alpha}} \,dy
    \lesssim \int_{-x}^{x/2} \frac{x^{-2/3} +
    |y|^{-2/3}}{x^{1+2\alpha}} \, dy \lesssim x^{-\sfrac{2}{3} - 2\alpha}
    + \frac{1}{x^{1+2\alpha}} \int_{-x}^{x/2} y^{-\sfrac{2}{3}} \, dy
    \lesssim \jb{x}^{-\sfrac{2}{3}-2\alpha}.
  \end{align*}
  To bound the third integral, we use the Holder
  seminorm bound
  \begin{align*} | \Psi' |_{C^{3\alpha}(x/2,x)} \leqslant \|
    \Psi' \|_{L^{\infty}(x/2,x)}^{1-3\alpha} \| \Psi''
    \|_{L^{\infty}(x/2,x)}^{3\alpha} \lesssim ( \jb{x}^{-\sfrac{2}{3}}
    )^{1-3\alpha} ( \jb{x}^{-\sfrac{5}{3}} )^{3\alpha} =
    \jb{x}^{-\sfrac{2}{3} - 3\alpha}.
  \end{align*}
  Using this seminorm bound, we obtain that the third
  integral can be estimated as
  \begin{align*} \lesssim | \Psi' |_{C^{3\alpha}(x/2,x)}
    \int_{x/2}^x \frac{1}{|x-y|^{1-\alpha}} \, dy \lesssim
    \jb{x}^{-\sfrac{2}{3} - 3\alpha} x^\alpha \lesssim
    \jb{x}^{-\sfrac{2}{3}-2\alpha}.
  \end{align*}
  The fourth integral is estimated similarly to the second integral,
  and the fifth integral is estimated similarly to the first.
\end{proof}

\section{Evolution Equations For Derivatives and Differences}
\label{app:evolution_derivs}
We record the equations which are used above in the course of our
proof. The equations are written in the form
$(\partial_s + D)f + g_Wf_y = Ff$ where $D$ is the damping and $F$ the
forcing.

We have the following differentiated forms of equation
\eqref{eq:modulated_pde}

\begin{align}\label{eq:stable_1st}
  \bigg(\partial_s +1 + \beta_\tau \partial_y W\bigg)\partial_yW
  + g_W\partial^2_y W
  =  - \beta_\tau e^{(3\alpha-1)s} \flap \partial_yW
\end{align}
\begin{align}\label{eq:stable_2nd}
  \left(\partial_s + \frac{5}{2} + 3\beta_\tau \partial_yW\right)
  \partial_y^2W
  + g_W\partial^3_yW
  =  - \beta_\tau e^{(3\alpha-1)s} \flap \partial_y^2W
\end{align}
\begin{align}\label{eq:stable_3rd}
  \bigg(\partial_s + 4 + 4\beta_\tau \partial_yW\bigg)\partial_y^3W
  + g_W\partial_y^4W
  =  -\beta_\tau \left[e^{(3\alpha-1)s}\flap \partial_y^3W +
  3(\partial_y^2 W)^2\right]
\end{align}
\begin{align}\label{eq:stable_6th}
  \bigg(\partial_s + \frac{11}{2} + 7 \beta_\tau\partial_yW\bigg)\partial_y^6W
  + g_W\partial_y^7W
  = -\beta_\tau\left[e^{(3\alpha-1)s} \flap \partial_y^6W
  + 35\partial_y^3W\partial_y^4W + 21\partial_y^2W \partial_y^5 W
  \right]
\end{align}
And the equations for the differences $\widetilde{W}$,
$\partial_y\widetilde{W}$, and $\partial_y^4\widetilde{W}$ derivative
\begin{align}
  \begin{aligned}\label{eq:diff}
    \left(\partial_s - \frac{1}{2} +
      \beta_\tau\Psi\right)\widetilde{W} + g_W\partial_y\widetilde{W} =
    -\beta_\tau \left[ e^{-\frac{s}{2}}\dot{\kappa} +
      e^{(3\alpha-1)s}\flap W + \Psi'(\dot{\tau}\Psi +
      e^{\frac{s}{2}}(\kappa-\dot{\xi})\right]
  \end{aligned}
\end{align}

\begin{align}\label{eq:diff_deriv}
  \begin{aligned}
  \left(\partial_s + 1 +
    \beta_\tau \left(\partial_y\widetilde{W}+2\Psi'\right)\right)
  \partial_y\widetilde{W}
  + g_W\partial_y^2\widetilde{W} \\
  = - \beta_\tau\left[e^{(3\alpha-1)s}\flap \partial_yW +
    \left(e^{\sfrac{s}{2}}(\kappa-\dot{\xi}) + \widetilde{W} +
      \dot{\tau}\Psi\right)\Psi'' + \dot{\tau}(\Psi')^2\right]
\end{aligned}
\end{align}

\begin{align}\label{eq:diff_4_deriv}
	\begin{split}
  \left(\partial_s + \frac{11}{2} +
    5\beta_\tau \partial_yW\right)
  \partial_y^4\widetilde{W}
  + g_W\partial_y^4\widetilde{W}
  &= - \beta_\tau\left[e^{(3\alpha-1)s}\flap \partial_y^4W + \left( e^{\sfrac{s}{2}} (\kappa - \dot\xi) 
  + \widetilde{W} +\dot\tau \Psi \right) \partial_y^5 \Psi \right. \\
  &\qquad  + 4 \partial_y \widetilde{W} \partial_y^4 \Psi + 8 \partial_y^2 \widetilde{W} \partial_y^3 \Psi
  + 10 \partial_y^3 \widetilde{W} \partial_y^2 \Psi  \\
  &\qquad \left. + 11 \partial_y^2 \widetilde{W} \partial_y^3 \widetilde{W} + 5 \dot\tau \partial_y \Psi \partial_y^4 \Psi + 10 \dot \tau \partial_y^2 \Psi \partial_y^3 \Psi
  \right]
  \end{split}
\end{align}

\subsection*{Acknowledgments}
We thank S. Shkoller for proposing this problem and for offering
valuable feedback on our work. 

\bibliographystyle{alpha}
\bibliography{bib}

\Addresses

\end{document}